\documentclass{article}
\usepackage[english]{babel}

\usepackage[letterpaper,top=2cm,bottom=2cm,left=2cm,right=2cm,marginparwidth=1.75cm]{geometry}

\usepackage{calrsfs}
\usepackage{amssymb}
\usepackage{amsmath}
\usepackage{amsthm}
\usepackage{subcaption}
\usepackage{multirow}
\usepackage{graphicx}
\usepackage{amsfonts}
\usepackage{mathtools}
\usepackage{mathrsfs}
\usepackage{placeins}

\usepackage{booktabs}
\newtheorem{remark}{Remark}

\newtheorem{assumption}{Assumption}
\newtheorem{theorem}{Theorem}
\newtheorem{proposition}{Proposition}

\usepackage[mathscr]{euscript}
\usepackage[dvipsnames]{xcolor}
\usepackage[colorlinks=true, allcolors=OliveGreen]{hyperref}
\usepackage{filecontents}
\usepackage{tikz}
\usepackage{pgfplots}
\usetikzlibrary{external}
\usepackage{authblk}
\setlength{\marginparwidth}{2cm}
\usepackage{todonotes}
\usepackage{array}
\newcolumntype{C}{>{\centering\arraybackslash}p{2.5cm}}

\usepackage{caption}
\usepackage{subcaption}
\usepackage{tikz}
\usepackage{csvsimple}
\usepackage{siunitx}
\usepackage{tikz} 
\usepackage{pgfplots}
\pgfplotsset{compat=newest} 
\usepgfplotslibrary{units} 
\sisetup{
  round-mode          = places,
  round-precision     = 2,
}

\newcommand{\averagel}{\{\!\!\{}
\newcommand{\averager}{\}\!\!\}}
\newcommand{\tnorm}{|\!\!\:|\!\!\:|}
\newcommand{\jumpl}{[\![}
\newcommand{\jumpr}{]\!]}

\newcommand{\partition}{\mathcal{T}_h}

\newcommand{\faces}{\mathcal{F}_h}

\DeclareMathAlphabet{\mathcalligra}{T1}{calligra}{m}{n}

\graphicspath{{Images/}}

\tikzset{  font={\fontsize{15pt}{12}\selectfont}}

\title{A high-order discontinuous Galerkin method for the numerical modeling of epileptic seizures\footnote{\textbf{Funding}: PFA has been partially funded by the research grants PRIN2020 n. 20204LN5N5 funded by MUR. PFA has been partially supported by ICSC—Centro Nazionale di Ricerca in High Performance Computing, BigData, and Quantum Computing funded by European Union—NextGeneration EU. The present research has been supported by MUR, grant Dipartimento di Eccellenza 2023-2027. PFA, SP and MC are members of INdAM-GNCS. PFA and MC are partially funded by the European Union (ERC SyG, NEMESIS, project number 101115663). Views and opinions expressed are however those of the authors only and do not necessarily reflect those of the European Union or the European Research Council Executive Agency. Neither the European Union nor the granting authority can be held responsible for them. CBLS's Ph.D. activity has been funded by the National Recovery and Resilience Plan (NRRP), Mission 4, Component 1 – Investment 3.4 and Investment 4.1 funded by the European Union.}}

\author[1]{Caterina B. Leimer Saglio}

\affil[1]{MOX-Dipartimento di Matematica, Politecnico di Milano, Piazza Leonardo da Vinci 32, Milan, 20133, Italy}

\author[1]{Stefano Pagani}

\author[1]{Mattia Corti}

\author[1]{Paola F. Antonietti}

\begin{document}
\maketitle

\begin{abstract}
Epilepsy is a clinical neurological disorder characterized by recurrent and spontaneous seizures consisting of abnormal high-frequency electrical activity in the brain.
In this condition, the transmembrane potential dynamics are characterized by rapid and sharp wavefronts traveling along the heterogeneous and anisotropic conduction pathways of the brain. 
This work employs the monodomain model, coupled with specific neuronal ionic models characterizing ion concentration dynamics, to mathematically describe brain tissue electrophysiology in grey and white matter at the organ scale. This multiscale model is discretized in space with the high-order discontinuous Galerkin method on polygonal and polyhedral grids (PolyDG) and advanced in time with a Crank-Nicolson scheme. This ensures, on the one hand, efficient and accurate simulations of the high-frequency electrical activity that is responsible for epileptic seizure and, on the other hand, keeps reasonably low the computational costs by a suitable combination of high-order approximations and agglomerated polytopal meshes.  
We numerically investigate synthetic test cases on a two-dimensional heterogeneous squared domain discretized with a polygonal grid, and on a two-dimensional brainstem in a sagittal plane with an agglomerated polygonal grid that takes full advantage of the flexibility of the PolyDG approximation of the semidiscrete formulation. Finally, we provide a theoretical analysis of stability and an \textit{a-priori} convergence analysis for a simplified mathematical problem.
\end{abstract} 

\section{Introduction}\label{sec:1}
Epilepsy is a neurological disorder characterized by seizures, an abnormal high-frequency electrical activity in the brain. This pathology can significantly reduce the quality of life to the point of life-threatening \cite{beghi2020epidemiology}. Clinical investigations into the causes, mechanisms, and treatments of epilepsy have increased in recent years \cite{da2003epilepsies,breakspear2017dynamic,rizzone2001deep}, and have taken advantage of recent developments in the mathematical modeling of epileptic activity both at neuronal \cite{kager2007seizure,cressman2009influence,somjen2008computer,stefanescu2012computational} and at organ level \cite{schreiner2022simulating,erhardt2020dynamics,lopez2020function}. Recent results \cite{schreiner2022simulating,erhardt2020dynamics} have shown that the bidomain model, typically employed in cardiac electrophysiology \cite{henriquez1993simulating}, can be used to simulate electrical activity in the brain, provided that it is supplemented with suitable ionic models that describe the variations in neuron ion concentrations across the membrane \cite{cressman2009influence, hodgkin1990quantitative, tsumoto2006bifurcations, fitzhugh1961impulses}. More specifically, ion models are exploited for neuronal modeling to describe the different ion concentrations' dynamics that influence the cellular action potential \cite{schwartz2016analytic,rincon2016inverse}. Although modeling the cardiac and neuronal contexts can be done by exploiting the same organ-level equations, there is a substantial difference between the two: neural conduction velocity is approximately 20-60 times faster than the cardiac one. This makes numerical simulation of the underlying physical process evermore challenging \cite{erhardt2020dynamics,hoermann2018adaptive}. 
\par
In modeling electrical activity within the human brain, suitable specific peculiarities of brain structure have to be accounted for \cite{brodal2004central}: different conductivity tensors must be incorporated in the mathematical model to correctly describe different brain tissues, such as grey and white matter. Both tissues are vital components of the brain and spinal cord, although there are substantial differences at both the structural and functional levels \cite{lopez2020function,szmurlo2006bidomain}. Since white matter is composed of axonal fibers covered by myelin and not directly by neuronal bodies like grey matter, it is highly anisotropic, with conductivity tensors following specific preferential directions depending on the local direction of the axonal connections \cite{brodal2004central}. This paper considers the Barreto-Cressman ionic model \cite{cressman2009influence} coupled with the monodomain model to describe electric organ-level activity \cite{franzone2005simulating}. The Barreto-Cressman model is a conductance-based model describing the evolution of the neuron's transmembrane potential and the dynamics of different ion concentrations (calcium, potassium, and sodium) that, through the opening and closing of ion channels govern the action potential mechanism \cite{ullah2009influence,alonso2018models}. The coupling with the monodomain equation upscales the transmembrane potential in three dimensions, generating spatial propagation patterns. In this way, it is possible to study the multiscale effects on the evolution of the transmembrane potential both from a microscopic (single neuron) \cite{jaeger2021derivation} and macroscopic perspective (brain tissue) \cite{foutz2022brain}. 
\par
Standard discretization techniques, such as the finite element method, involve high computational costs to simulate the discretized system on realistic brain geometries. Indeed, modeling wave propagation with the brain tissue is characterized by very complicated geometrical details, steep wavefronts, and high heterogeneity of the materials, requiring extremely fine spatio-temporal discretizations to ensure sufficient accuracy and a correct description of the wavefront. To overcome the inherent geometrical and functional complexities, we propose, for the space discretization of the model, a high-order discontinuous Galerkin method \cite{arnold2002unified} on polygonal and polyhedral grids (PolyDG) \cite{cangiani2014hp,antonietti2013hp,bassi2012flexibility}. This choice is motivated by the following: (i) high-order approximations have demonstrated remarkable performance in accurately approximating wavefront propagation phenomena, see, e.g., \cite{antonietti2021high};
(ii) in the simulation of brain physiology and pathology, the geometric flexibility of PolyDG can be successfully exploited to tame computational complexity, reduce the computational cost,  control dispersion and dissipation in the solution \cite{corti2023numerical,CortiFK};
(iii) the construction of the meshes in this context is facilitated by agglomeration strategies that allow flexibility and the preservation of high quality in the description of complex boundaries \cite{antonietti2022agglomeration}. 
\\
The aim of this work is to introduce, theoretically analyze, and test in practice a PolyDG method for the numerical approximation of brain electrophysiology, especially in pathological scenarios characterized by high-frequency impulses that generate rapid wavefronts traveling along the white and gray matter.  
Numerical test cases are conducted using a specific parametrization of the Barreto-Cressman ionic model to simulate epileptic seizures both on simplified geometries and in brain sections reconstructed from medical images. In both cases, we encode the distinction between white and grey matter into the conductivity tensor. In these scenarios, we present a quantitative analysis of wavefront conduction velocities on varying mesh refinement and polynomial degree. Finally, a simplified ionic model is exploited to carry out a theoretical stability analysis and to prove \textit{a-priori}  convergence error estimate. 
\\

The paper is organized as follows. In Section \ref{sec:2}, we introduce the mathematical model of the monodomain equation coupled with the Barreto-Cressman ionic model and its weak formulation. Section \ref{sec:3} presents the semi-discrete formulation for the coupled problem. In Section \ref{sec:4}, we introduce the time-discretization exploiting a second-order semi-implicit method. In Section \ref{sec:5}, we present two-dimensional simulations, including white and grey matters and unstable initial conditions. We also numerically analyze the evolution of the transmembrane potential in a realistic setting given by the brainstem section reconstructed from medical images from \cite{lamontagne2019oasis}. Section \ref{sec:6} provides a theoretical analysis for a simplified monodomain problem presenting stability results of the semi-discretized problem and an a priori error estimate proof. Finally, in Section \ref{sec:9}, we present the results of the convergence tests with an analytical solution.

\section{The mathematical model}
\label{sec:2}
\noindent In this section, we present the mathematical formulation of the monodomain model coupled with a general ionic model posed in abstract form. This model describes the evolution of the transmembrane potential within brain tissue under pathophysiological conditions \cite{schreiner2022simulating}. Given an open, bounded, polygonal domain $\Omega \in \mathbb{R}^d$, $(d=2,3)$ and a final time $T>0$, we introduce the transmembrane potential $u = u(\boldsymbol{x},t)$ with $u: \Omega \times [0,T] \rightarrow \mathbb{R}$, and the vector $\boldsymbol{y} = \boldsymbol{y}(\boldsymbol{x},t)$ with $\boldsymbol{y}: \Omega \times [0,T] \rightarrow \mathbb{R}^n, n\ge1,$ containing the ion concentrations and gating variables of the ionic model. 
\noindent The coupled problems reads as follows: For any time $ t \in (0,T]$, find $u=u(\boldsymbol{x},t)$ and $\boldsymbol{y}=\boldsymbol{y}(\boldsymbol{x},t)$ such that:
\begin{equation}
    \label{eq:monodomain}
    \begin{dcases}
        \chi_m C_m  \frac{\partial u}{\partial t} - \nabla \cdot (\mathbf{\Sigma} \nabla u) +  f(u,\boldsymbol{y}) = I^\mathrm{ext} & \mathrm{in} \; \Omega \times (0,T], \\
        \frac{\partial \boldsymbol{y}}{\partial t} + \boldsymbol{m}(u,\boldsymbol{y}) = \boldsymbol{0} &\mathrm{in} \; \Omega \times (0,T], \\
        \mathbf{\Sigma} \nabla u \cdot \boldsymbol{n} = 0  & \mathrm{on}\; \partial \Omega  \times (0,T],\\
        u(0) = u^0, \; \boldsymbol{y}(0) = \boldsymbol{y}^0 &\mathrm{in}\; \Omega.
    \end{dcases}
\end{equation}
In Equation~\eqref{eq:monodomain}, $f(u,\boldsymbol{y})$ and $\boldsymbol{m}(u,\boldsymbol{y})$ represent the ionic forces and the evolution of the $n$ different ion concentrations appearing in the ionic model, respectively. These functions may be characterized by one or more variables representing the concentration of certain ions, such as calcium and potassium, or gating variables relating to the opening and closing of ion channels in the cell membrane \cite{schwartz2016analytic,cressman2009influence}. We assume homogeneous Neumann boundary conditions and $\boldsymbol{n} = \boldsymbol{n}(\boldsymbol{x})$ is the outward normal vector to $\partial \Omega$. Finally, we introduce the initial conditions where $u^0$ and $\boldsymbol{y}^0$. In Equation \eqref{eq:monodomain} $\boldsymbol{\Sigma}$ represents the conductivity tensor, we assume to be constant in time and piecewise in space, characterized by its tangential component ($\sigma_t$) and its normal component ($\sigma_n$):
\begin{equation}
\label{eq:sigma}
      \boldsymbol{\Sigma} = \begin{bmatrix} \sigma_t & 0 \\ 0 & \sigma_n \end{bmatrix}.
\end{equation}
Specifically, we employ a fully isotropic conductivity in grey matter and an anisotropic conductivity in white matter \cite{schreiner2022simulating}. Where $I_\mathrm{ion}$ is the forcing ionic current, $\chi_m$ is the membrane surface area per unit volume, and $C_m$ is the membrane capacitance.
For the weak formulation of the problem, we consider the Sobolev space $V= H^1(\Omega)$, and we employ a standard definition of the scalar product in $L^2(\Omega)$, denoted by $(\cdot,\cdot)_{\Omega}$. The induced norm is denoted by $\|\cdot\|$. We remind that for vector-valued and tensor-valued functions, the definition extends componentwise \cite{salsa2022partial}.
\noindent Starting from Equation \eqref{eq:monodomain}, we introduce the ionic component and the dynamics component of the ionic model as:
\begin{equation}
    \begin{aligned}
       a(u,v) &= (\boldsymbol{\Sigma} \nabla u,\nabla v)_{\Omega}, & 
       r_{\text{ion}}(u,\boldsymbol{y},v) &= (f(u,\boldsymbol{y}),v)_{\Omega} &\forall \:  v \in V, \quad
       r_{\text{m}}(u,\boldsymbol{y},\boldsymbol{w}) &=  (\boldsymbol{m}(u,\boldsymbol{y}),\boldsymbol{w})_{\Omega} &\forall \:  \boldsymbol{w} \in V^n.
    \end{aligned}
    \label{eq:barreto_weak}
\end{equation}
We assume that the forcing terms, physical parameters, and initial conditions in Equation \eqref{eq:monodomain} are sufficiently regular, $i.e.$: $f(u,\boldsymbol{y}) \in L^2(0,T;L^2(\Omega))$, $I^\mathrm{ext}(u,\boldsymbol{y}) \in L^2(0,T;L^2(\Omega))$, $\boldsymbol{m}(u,\boldsymbol{y}) \in [L^2(0,T;L^2(\Omega))]^n$, $\chi_m$ and $C_m \in L_+^\infty(\Omega)$ where $L^{\infty}_+(\Omega):=\{v \in L^\infty(\Omega): v \ge 0 \text{ a.e. in } \Omega\}$, $u^0\in L^2(\Omega)$ and $\boldsymbol{y}^0\in [L^2(\Omega)]^n$.\\

\noindent The weak formulation of the problem \eqref{eq:monodomain} reads:
$ \forall \: t \in (0,T] $ find $u(t) \in V ,\boldsymbol{y}(t) \in V^n$ such that:
\begin{equation}
 \left\{ 
    \begin{aligned}
        &\chi_m C_m  \left(\frac{\partial u(t)}{\partial t},v\right)_{\Omega} + a(u(t),v) + \mathrm{\chi_m} r_{\text{ion}}(u(t),v) = (I^\text{ext},v)_\Omega  &\forall \:  v \in V, \\
        & \left(\frac{\partial \boldsymbol{y}(t)}{\partial t},\boldsymbol{w}\right)_{\Omega} + r_{\text{m}}(u(t),\boldsymbol{y}(t),\boldsymbol{w}) = 0  &\forall \:  \boldsymbol{w} \in V^n, \\
        &u(0) = u^0, \; \boldsymbol{y}(0) = \boldsymbol{y}^0 &\text{in } \Omega .
    \end{aligned}
    \right.
    \label{eq::weak_general}
\end{equation}

\subsection{Barreto-Cressman ionic model}
\label{sec::21}
\noindent The Barreto-Cressman ionic model \cite{cressman2009influence} can be defined as a conductance-based model that represents the membrane potential of a neuron and the dynamic interactions of intra- and extra-cellular ion concentrations~\cite{barreto2011ion}. It can accurately display neuronal burstings, which is the epilepsy phase characterized by the fast-spiking behavior of the transmembrane potential~\cite{ullah2009influence,barreto2011ion}, as well as neuronal quiescence, which is instead distinguishable by a slower oscillatory behavior of the quantities of interest. The complete model derivation can be found in \cite{cressman2009influence}. 
This ionic conductance-based model is characterized by three different ionic concentrations: intracellular sodium $s(t) =[\mathrm{Na}]_i(t)$, extracellular potassium $k(t) = [\mathrm{K}]_o(t)$, and intracellular calcium $c(t) = [\mathrm{Ca}]_i(t)$. 
\par
The Barreto-Cressman model consists of three different ionic variables that characterize the model and the three gating variables $g^s,g^k,g^c$ that control the opening and closing of the corresponding ion channels, $i.e.$
\begin{equation}
\label{eq:yBC}
    \begin{aligned}
      \boldsymbol{y}(\boldsymbol{x},t) = 
      &\left[
    \begin{aligned}
       c(\boldsymbol{x},t), \:
       k(\boldsymbol{x},t),\:
       s(\boldsymbol{x},t), \:
       g^s(\boldsymbol{x},t), \:
       g^k(\boldsymbol{x},t), \:
       g^c(\boldsymbol{x},t) \:
    \end{aligned}
    \right]^\top, \;\text{ with } \; \boldsymbol{y}(\boldsymbol{x},t): \Omega\times [0,T] \rightarrow \mathbb{R}^6.
\end{aligned}
\end{equation}

The ionic current characterizing the dynamics of the transmembrane potential is characterized by different contributions arising from the sodium, potassium, and chlorine current, defined and analyzed more specifically below. Under specific parametrizations and initial conditions, the Barreto-Cressman ionic model shows that a single cell subject to intra and extracellular ion concentration dynamics can exhibit recurrent seizure-like events. At the tissue level, the coupling of the monodomain model with the conductance-based ion model is exploited to differentiate individual epileptic seizures from a condition of abnormal brain activity. We then introduce the ionic forcing term of the model in Equation \eqref{eq:monodomain} as follows.
\begin{equation}
    \label{eq:FBC}
        f(u,\boldsymbol{y}) = I_\mathrm{ion}(u,\boldsymbol{y}) = I_\mathrm{Na}(u,\boldsymbol{y}) + I_\mathrm{K}(u,\boldsymbol{y}) + I_\mathrm{Cl}(u,\boldsymbol{y}). 
\end{equation}
The ionic current $I_\mathrm{ion}$ depends directly on the three ionic variables $s$,$c$, $k$, and the three gating variables. $\mathrm{\chi_m}$ is the membrane capacitance per unit area, and $I_\mathrm{ion}$ is the transmembrane ionic current per unit area. In addition, four other types of currents are introduced: the current related to the ability of cells to remove excess potassium from the extracellular space $I_\mathrm{Glia}$, the current representing potassium diffusion $I_\mathrm{diff}$, the current that denotes the sodium–potassium pump $I_\mathrm{pump}$, and the external forcing term $I_\mathrm{ext}$.
The general expressions for the gating variables vary depending on the specific variable being considered. We set
\begin{equation*}
    \label{eq:tau}
    \begin{aligned}
       \tau_j(u) &= \frac{1}{a_j(u) + b_j(u)}, \qquad g_{\infty}^j(u) = \frac{a_j(u)}{a_j(u) + b_j(u)}, \qquad \mathrm{where}\, j=s,k,c.
    \end{aligned}
\end{equation*}
Table \ref{table:ab} reports to the expressions of $a_j(\cdot)$ and $b_j(\cdot)$, which enters the definition of $\tau_j$ and $g_{\infty}^j$ for the three different gating variables. 
\noindent The ionic currents related to sodium-potassium channels ($I_{\text{pump}},I_{\text{Glia}},I_{\text{diff}}$) are defined as follows:
\begin{equation*}
    \label{eq:equazioni_ionico}
    \begin{aligned}
        I_\mathrm{pump}(k,s) = \frac{\rho}{1+\exp(5.5 - k(t))}\cdot\frac{1}{1+\exp\left(\frac{25-s(t)}{3}\right)},\quad
        I_\mathrm{Glia}(k) = \frac{\mathrm{G_{glia}}}{1+\exp\left(\frac{18 - k(t)}{2.5}\right)},\quad
        I_\mathrm{diff}(k) = \epsilon(k(t) - K_{\mathrm{bath}}).
    \end{aligned}
\end{equation*}
\begin{table}[h!]
\centering\caption{Functions for the Barreto-Cressman ionic model \cite{barreto2011ion,cressman2009influence}}\label{table:ab}%
\begin{tabular}{ll}  
\toprule
 $a_j=a_j(t)$ & $b_j=b_j(t)$ \\
\midrule
      $ \displaystyle a_s(t) = \frac{0.1(u(t) +30)}{1-\exp(-0.1(u(t)+30))}$ &  $\displaystyle b_s = 4 \exp\left(- \frac{u(t)+55}{18}\right)$  \\
     $ \displaystyle a_k(t) =0.07\exp\left(-0.2(u(t)+44)\right)$ &  $ \displaystyle b_k = \frac{1}{1 + \exp(-0.1(u(t)+14))}$ \\
    $ \displaystyle a_c(t) = \frac{0.01(u(t)+34)}{1-\exp(-0.1(u(t)+34))}$ & $ \displaystyle b_c = \frac{1}{8}\exp\left(-\frac{u(t)+44}{80}\right)$ \\
\bottomrule
\end{tabular}
\end{table}

\noindent Note that the different ionic currents depend non-linearly on the gating variables:
\begin{equation}
    \label{eq:equazioni_correnti}
    \begin{aligned}
        I_\mathrm{Na}(g^s,g^k,u) &= \left( \mathrm{G_{NaL}} + \mathrm{G_{Na}} (g^s(t))^3g^k(t) \right) (u(t) - \mathrm{E_{Na}}), \\
        I_\mathrm{K}(g^c,c,u) &= \left( \mathrm{G_{K}}(g^c(t))^4 + \mathrm{G_{AHP}}\frac{c(t)}{1 + c(t)} + \mathrm{G_{KL}} \right) (u(t) - \mathrm{E_{K}}),\\
        I_\mathrm{Cl}(u) &=  \mathrm{G_{CIL}}(u(t) - \mathrm{E_{Cl}}). 
    \end{aligned}
\end{equation}
Here, $I_\mathrm{Na}, I_\mathrm{K}, I_\mathrm{Cl}$ refer to the currents of sodium, potassium, and chloride ions, respectively. Sodium channels cause the nerve cell membrane to depolarize and facilitate the conduction of action potentials across the neuronal cell surface while calcium channels contribute to the overall electrical excitability of neurons and play a key role in regulating the release of neurotransmitters to pre-synaptic nerve terminals. Finally, potassium channels are primarily responsible for restoring the cell membrane's resting potential after triggering an action potential and regulating the balance between input and output in individual neurons. 
The Nerst potentials can be rewritten as:
\begin{equation*}
    \label{eq:Nerst_1}
    \begin{aligned}
      &E_{\text{Ca}} = 120\;\mathrm{mV}, \quad
      &E_{\text{Na}} = 26.64 \log \left( \frac{270 - n}{n} \right), \quad
      &E_{\text{K}} = 26.64 \log \left(\frac{k}{(158 - n)} \right), \quad 
      &E_{\text{Cl}} = 26.64 \log \left(\frac{[\mathrm{Cl}]_i}{[\mathrm{Cl}]_o} \right).\\
    \end{aligned}
\end{equation*}
Finally, the dynamics of the ionic model is given by:
\begin{equation}
\label{eq::GBC}
    \begin{aligned}
      \boldsymbol{m}(u,\boldsymbol{y}) = 
      &\left[
    \begin{aligned}
       & \frac{c(t)}{80} + \mathrm{G_{Ca}}\frac{0.002(u(t)-\mathrm{E_{Ca}})}{1 + \exp\left(-\frac{25 + u(t)}{2.5} \right)}  \\
       &   \frac{1}{\tau} \left( I_\mathrm{diff} - 14 I_\mathrm{pump} - I_\mathrm{glia} + 7 \gamma I_\mathrm{K} \right)  \\
       & \frac{1}{\tau} \left( \gamma I_\mathrm{Na} - 3I_\mathrm{pump} \right)  \\
       & 3 \frac{g^s(t) - g^s_{\infty}}{ \tau_{c}}  \\
       & 3 \frac{ g^k(t) - g^k_{\infty}}{ \tau_{k}} \\
       &  3 \frac{ g^c(t) - g^c_{\infty}}{ \tau_{s}}
    \end{aligned}
    \right].
\end{aligned}
\end{equation}

 \begin{table}[h]
 \centering
 \footnotesize
    \caption{Parameters for Barreto-Cressman ionic model. Values taken from \cite{barreto2011ion}.}\label{table:G2}%
    \begin{tabular}{llll}  
    \toprule
    \textbf{Parameter} &  \textbf{Description} &  \textbf{Values} & \\
    \midrule
    $C_m$ &  \text{Membrane capacitance}& 10$^{-2}$ & $\mathrm{mF/cm^2}$   \\
    G$_{\text{AHP}}$ &  \text{Conductance of after hyperpolarization current}& 0.01 &$\mathrm{mS/cm^2}$   \\
    G$_{\text{KL}}$ &\text{Conductance of potassium leak current} & $0.05$ &$\mathrm{mS/cm^2}$ \\
    G$_{\text{Na}}$ & Conductance of persistent sodium current & $100$ &$\mathrm{mS/cm^2}$  \\
    G$_{\text{CIL}}$ & \text{Conductance of chloride leak current}& $0.05$ &$\mathrm{mS/cm^2}$ \\
    G$_{\text{NaL}}$ & \text{Conductance of sodium leak current} & $0.0175$ &$\mathrm{mS/cm^2}$  \\
    G$_{\text{Ca}}$ & \text{Calcium conductance} & $0.1$ &$\mathrm{mS/cm^2}$ \\
    G$_{\text{K}}$ & \text{Conductance of potassium current} & $40.0$ &$\mathrm{mS/cm^2}$   \\
    G$_{\mathrm{glia}}$ & \text{Strength of glial uptake} & $66.66$ &$\mathrm{mM/s}$   \\
    K$_{\text{bath}}$ & \text{Conductance of potassium} & $8.0$ &$\mathrm{mM}$\\
    \bottomrule
\end{tabular}  
\end{table}

\begin{table}[h]
\centering
\footnotesize
    \caption{Variables of the  Barreto-Cressman ionic model.}\label{table:G1}%
    \begin{tabular}{lll}  
     \toprule
    \textbf{Variable} &  \textbf{Description} & \textbf{Units}  \\
    \midrule
    $n$ &  \text{Intra/extra-cellular sodium concentration} & $\mathrm{mM}$  \\
    $k$ & \text{Intra/extra-cellular  potassium concentration} & $\mathrm{mM}$ \\
    $c$ & \text{Intra/extra-cellular  calcium concentration}  & $\mathrm{mM}$  \\
    $g^s$ & \text{Activating sodium gate} & \\
    $g^k$ & \text{Activating potassium gate} &\\
    $g^c$ & \text{Inactivating sodium gate} &\\
    $I_\mathrm{{K}}$ & \text{Potassium current} & $\mathrm{\mu A/cm^2}$ \\
    $I_\mathrm{{Na}}$ & \text{Sodium current} &$\mathrm{\mu A/cm^2}$ \\
    $I_\mathrm{{Cl}}$ & \text{Chloride current} &$\mathrm{\mu A/cm^2}$ \\
    $I_\mathrm{{diff}}$ & \text{Potassium diffusion to the nearby reservoir} & $\mathrm{mM/s}$ \\
    $I_\mathrm{{pump}}$ & \text{Pump current} &$\mathrm{mM/s}$ \\
    $I_\mathrm{{glia}}$ & \text{Glial uptake} &$\mathrm{mM/s}$ \\
    $\mathrm{E_{Ca}}$ & \text{Nerst potential of calcium} & $\mathrm{mV}$   \\
    $\mathrm{E_K}$ & \text{Nerst potential of potassium} & $\mathrm{mV}$  \\
    $\mathrm{E_{Na}}$ & \text{Nerst potential of sodium} & $\mathrm{mV}$   \\
    $\mathrm{E_{Cl}}$ & \text{Nerst potential of chloride} & $\mathrm{mV}$ \\
    $\tau_g$ & \text{Forward rate constant for transition between open/closed gate} &$\mathrm{mM/s}$ \\
    $g_{\infty}$ & \text{Backward rate constant for transition between open/closed gate} & $\mathrm{mM/s}$ \\
 \bottomrule
\end{tabular}
\end{table}
\noindent Taking advantage of the abstract semi-discrete formulation of Section \ref{sec:3}, it is possible to analyze the semi-discrete formulation of the Barreto-Cressman model coupled with the monodomain model. We can construct the weak formulation of the coupled multiscale model as follows.
\vspace{1mm}\\
\noindent For any $t \in (0,T]$, find $(u(t),\boldsymbol{y}(t)) \in  V\times V^n$:
\begin{equation}
 \left\{
    \begin{aligned}
        &\chi_m C_m  \left(\frac{\partial u(t)}{\partial t},v\right)_{\Omega} + a(u(t),v) + \mathrm{\chi_m} r_{\text{ion}}(u(t),\boldsymbol{y}(t),v) =  (I^{\mathrm{ext}},v)_\Omega  &\forall \:   v \in V, \\
       &\left(\frac{d\boldsymbol{y}(t)}{dt},\boldsymbol{w}\right) = - \boldsymbol{r}_{\text{m}}(u,\boldsymbol{y},\boldsymbol{w})  &\forall \:  \boldsymbol{w} \in V^n, \\
        &u(0) = u^0,\; \boldsymbol{y}(0)=\boldsymbol{y}^0 &\text{in } \Omega.
    \end{aligned}
    \right.
    \label{eq:barreto_weak_}
\end{equation}
where $r_\mathrm{ion}$, $r_\mathrm{m}$, and $a$ are defined as in \eqref{eq:barreto_weak}. In Table \ref{table:G2} and in Table \ref{table:G1}, we show all the parameters and variables in the model, respectively.

\section{PolyDG semi-discrete formulation}
\label{sec:3}
\noindent We next introduce the PolyDG semi-discrete formulation of the problem in Equation \eqref{eq::weak_general}. Let $\mathcal{T}_h$ be a polytopic mesh partition of the domain $\Omega$ made of disjoint elements $K$, where for each element, we define $h_K$ as its diameter, setting $h = \text{max}_{K\in \mathcal{F}_h} h_K < 1$.
We define the interfaces as the intersection of the ($d$-1)-dimensional facets of neighbouring elements. In the case when $d = 2$, we note that the interfaces of a given element $K$ will always consist of segments. For $d = 3$, we assume that each interface of an element $K$ may be subdivided by a set of planar triangles defined as $\mathcal{F}_h$. We denote by $\mathcal{F}_h^I$ the union of all interior faces that are contained in $\Omega$ and $\mathcal{F}_h^N$ the ones lying on $\partial \Omega$.
\begin{assumption} (Mesh Regularity \cite{pietro2020hybrid})
\label{as:mesh}
The mesh $\{\mathcal{T}_h\}_h$ satisfies the following properties:
\begin{itemize}
    \item \textit{Shape Regularity:} $\forall \: K \in \mathcal{T}_h \textit{ it holds}: \quad c_1h_K^d \lesssim |K| \lesssim c_2h_K^d$
\item \textit{Contact Regularity: $\forall \: F$ in $\mathcal{F}_h$ with F $\subseteq \overline{K}$ for some $K \in \mathcal{T}_h$, it holds $h_K^{d-1}\lesssim |F|$, where $|F|$ is the Hausdorff measure of the space F. }
\item \textit{Submesh Condition: There exists a shape-regular, conforming, matching simplicial submesh $\hat{\mathcal{T}}_h$ such that :} 
\begin{itemize}
    \item $\forall \: \hat{K} \in \hat{\mathcal{T}}_h \exists K \in \mathcal{T}_h: \quad \hat{K} \subseteq K$.
    \vspace{1mm}
    \item  \textit{The family $\{\hat{\mathcal{T}}_h\}$ is shape and contact regular.}
    \vspace{1mm}
    \item  \textit{$\forall \: \hat{K} \in \hat{\mathcal{T}}_h, K \in \mathcal{T}_h$ with $\hat{K} \subseteq K$, it holds $h_K \le h_{\hat{K}}$}.
\end{itemize}
\end{itemize}
\end{assumption}
\noindent Let us define $\mathbb{P}^p(K)$ the space of polynomial of total degree $p\ge1$ over the element $K$. We can introduce the discontinuous finite element space: $ V_h^{\text{DG}}= \{v_h \in L^2(\Omega) : v_h|_{K} \in \mathbb{P}^p (K)\quad \forall \: K \in \mathcal{T}_h \}$. Given $F \in \mathcal{F}_h^I$ be the face shared by the elements $K^{\pm}$, let $\boldsymbol{n}^{\pm}$ be the normal vector on face $F$. Given a regular scalar-valued function $v$ and vector-valued function $\boldsymbol{q}$ we define the trace operators \cite{arnold2002unified}:
\begin{equation*}
    \begin{aligned}
      &\averagel v \averager = \frac{1}{2} (v^+ + v^-) , \quad & \jumpl v  \jumpr& = v^+ \boldsymbol{n}^+ + v^- \boldsymbol{n}^-, \quad &\text{on $F \in \mathcal{F}_h^I$},&\\
      &\averagel \boldsymbol{q} \averager =  \frac{1}{2} (\boldsymbol{q}^+ + \boldsymbol{q}^-) , \quad & \jumpl \boldsymbol{q} \jumpr & = \boldsymbol{q}^+\cdot \boldsymbol{n}^+ + \boldsymbol{q}^-\cdot \boldsymbol{n}^-,  \quad &\text{on $F \in \mathcal{F}_h^{I}$}.&\\
    \end{aligned}
\end{equation*}
We remind that we use the superscripts $\pm$ on the functions to denote the traces of the functions on $F$ taken within the interior to $K^{\pm}$, respectively. In order to introduce the discretization we define the following penalization function $\eta: \mathcal{F}_h \rightarrow \mathbb{R}_+$:
\begin{equation}
\label{eq:eta}
\eta = \eta_0 \{\Sigma_K\}_A\frac{p^2}{\{h\}_H} \quad\text{on} \; F \in \mathcal{F}_h^I,
\end{equation}
where $\{\cdot\}_A$ is the arithmetic average operator and 
$\{\cdot\}_H$ is the armonic average operator $i.e.$ $\{h\}_H = \frac{2h_+h_-}{h_+ + h_-}$ and $\eta_0>0$ is a parameter to be chosen. Moreover, we define $\Sigma_K=\|\sqrt{\boldsymbol{\Sigma}}|_K\|^2_2$. Implicit in this assumption is that the discontinuities of $K$ are aligned with $\tau_n$. We introduce the bilinear form $\mathcal{A}(\cdot,\cdot): V_h^{\text{DG}}\times V_h^{\text{DG}} \rightarrow \mathbb{R}$ as:
\begin{equation*}
    \begin{aligned}
    \small
       \mathcal{A}(u,v) = \int_{\Omega} \mathbf{\Sigma}\nabla_h u \cdot  \nabla_h v \;dx&+ \sum_{F \in \mathcal{F}_h^I} \int_F (\eta  \jumpl u \jumpr \cdot  \jumpl v  \jumpr - \averagel \boldsymbol{\Sigma} \nabla u \averager \cdot  \jumpl v\jumpr - \jumpl u \jumpr \cdot \averagel \boldsymbol{\Sigma} \nabla v\averager) d\sigma \quad \forall \: u,v \in V^{\text{DG}}_h ,
    \end{aligned}
\end{equation*}
where $\nabla_h$ is the elementwise gradient. The semi-discrete formulation of problem \eqref{eq:monodomain} reads:
\vspace{1mm}\\
\noindent For any $t \in (0,T]$, find $(u_h(t),\boldsymbol{y}_h(t)) \in V^{\text{DG}}_h \times \left[V^{\text{DG}}_h\right]^n \text{ such that:}$
\begin{equation}
\label{eq:weak_barreto_2}
 \left\{ 
    \begin{aligned}
        &\chi_m C_m  \left(\frac{\partial u_h(t)}{\partial t},v_h\right)_{\Omega} + \mathcal{A}(u_h(t),v_h) + \chi_m\left( f(u_h(t),\boldsymbol{y}_h(t)),v_h\right)_{\Omega} =  (I_h^{\mathrm{ext}},v_h)_{\Omega}  &\forall \:  v_h \in V_h^{\text{DG}}, \\
        &  \left(\frac{d\boldsymbol{y}_h(t)}{d t},\boldsymbol{w}_h\right)_{\Omega} + \left( \boldsymbol{m}(u_h(t),\boldsymbol{y}_h(t)),\boldsymbol{w}_h\right)_{\Omega} =  0  &\forall \:  \boldsymbol{w}_h \in [V_h^{\text{DG}}]^n, \\
        &u_h(0) = u_h^0,\; \boldsymbol{y}_h(0) = \boldsymbol{y}_h^0 &\text{in } \Omega.
    \end{aligned}
    \right.
\end{equation}
\noindent Under the same assumptions in Section \ref{sec:2}, it is possible to introduce the PolyDG semi-discrete formulation also for the coupled monodomain problem with the Barreto-Cressman ionic model, which reads as follows: 
\vspace{1mm}
\\
\noindent For any $t \in (0,T] $ find $(u_h(t), \boldsymbol{y}(t)) \in V_h^{\text{DG}}\times[V_h^{\text{DG}}]^n$ such that:
\begin{equation}
 \left\{
    \begin{aligned}
        &\chi_m C_m  \left(\frac{\partial u_h(t)}{\partial t},v_h\right)_{\Omega} + \mathcal{A}(u_h(t),v_h) + \mathrm{\chi_m} r_{\text{ion}}(u_h(t),\boldsymbol{y}_h(t),v_h) =  (I_h^{\mathrm{ext}},v_h)_\Omega  &\forall \:   v_h \in V_h^\mathrm{DG}, \\
       &\left(\frac{d\boldsymbol{y}_h(t)}{dt},\boldsymbol{w}_h\right)_{\Omega} = - \boldsymbol{r}_{\text{y}}(u_h(t),\boldsymbol{y}_h(t),\boldsymbol{w}_h)  &\forall \:  \boldsymbol{w}_h \in  [V_h^\mathrm{DG}]^n, \\
         &u_h(0) = u_h^0,\; \boldsymbol{y}_h(0)=\boldsymbol{y}_h^0 &\text{in } \Omega.
    \end{aligned}
    \right.
    \label{eq:barreto_weak_parameters}
\end{equation}

\section{Fully-discrete formulation}
\label{sec:4}
\noindent Let $N_h$ be the dimension of $V_h^\mathrm{DG}$ and let $(\varphi_j)^{N_h}_{j=0}$ be a suitable basis for $V_{h}^\mathrm{DG}$, then $u_h(t) = \sum_{j=0}^{N_h} U_j(t)\varphi_j$ and $y_l(t) = \sum_{j=0}^{N_h} Y^l_j(t)\varphi_j$ for all $l=1,...,n$. We denote $\mathbf{U} \in \mathbb{R}^{N_h}$, $\mathbf{Y}_l \in \mathbb{R}^{N_h}$ for all $l=1,...,n$ and $\mathbf{Y} = [\mathbf{Y}_1,...,\mathbf{Y}_n]^\top$. We define the matrices:
\begin{equation}
    \begin{aligned}
      [\mathbf{M}]_{ij} &= (\varphi_i,\varphi_j)_{\Omega}, \; &\text{(Mass matrix),}& \quad  i,j = 1,...,N_h \\
      [\mathbf{F}]_{j} &= (I^\text{ext},\varphi_j)_{\Omega},  \; &\text{(Forcing term),}& \quad j = 1,...,N_h\\
      [\mathbf{I}(u,\boldsymbol{y})]_{j} &= (f(u,\boldsymbol{y}),\varphi_j)_{\Omega},  \; &\text{(Non-linear ionic forcing term),}& \quad  j = 1,...,N_h\\
      [\mathbf{G}_l(u,\boldsymbol{y})]_{j} &= (\boldsymbol{m}_l(u,\boldsymbol{y}),\boldsymbol{\varphi}_j)_{\Omega},  \; &\text{(Dynamics of the ionic model),}& \quad  j = 1,...,N_h, \; l=1,...,n  \\ 
      [\mathbf{A}]_{ij} &= \mathcal{A}(\varphi_i,\varphi_j) &\text{(Stiffness matrix),}& \quad  i,j = 1,...,N_h.
    \end{aligned}
\end{equation}
\noindent The terms related to the ionic forcing and the ionic coupling dynamics are identified by $\mathbf{I}(t),\mathbf{G}(t)$, respectively. The algebraic form of problem \eqref{eq:weak_barreto_2} reads
\begin{equation}
\left\{
    \label{eq:Full_discrete_generical}
    \begin{aligned}
      & \chi_m C_m \mathbf{M} \dot{\mathbf{U}}(t) + \mathbf{A}  \mathbf{U}(t) + \mathbf{I}(t) = \mathbf{F}(t)  , \\
      &\dot{\mathbf{Y}}(t) + \mathbf{G}(t) = \mathbf{0},\\
      &\mathbf{U}(0) = \mathbf{U}_0, \; \mathbf{Y}(0) = \mathbf{Y}_0.
    \end{aligned}
\right.
\end{equation}
We divide the interval $[0, T]$ into N subintervals $(t_n,t_{n+1}]$ of length $\Delta t$ so that $t_n =  n\Delta t$, for $n=0,...,N-1$. Here, we employ a semi-implicit treatment of the diffusion term while the ion term is treated explicitly. 
Furthermore, for the temporal discretization, we consider a Crank-Nicholson second-order scheme \cite{sundnes2007computing}. Given $\mathbf{U}_0$ and $\mathbf{Y}_0$, the following discrete scheme reads:
\\
Find $\mathbf{U}^{n+1} \simeq \mathbf{U}(t_{n+1})$ and  $\mathbf{Y}^{n+1} \simeq \mathbf{Y}(t_{n+1})$ for $n=0,...,N-1$, such that:
\begin{equation}
\left\{
    \label{eq:Full_discrete_complete}
    \begin{aligned}
      & \left(\chi_m C_m M + \frac{\Delta t}{2}A \right) \mathbf{U}^{n+1}  =  \left(\chi_m C_m M - \frac{\Delta t}{2}A \right) \mathbf{U}^{n} + \Delta t \mathbf{F}^{n+1} - \mathrm{\chi_m} \Delta t \mathbf{I}_\mathrm{stim}^{n+1}  , \\
      &\mathbf{Y}^{n+1} = \mathbf{Y}^{n} - \Delta t \mathbf{G}^{n},\\
      &\mathbf{U}^0 = \mathbf{U}_0, \; \mathbf{Y}^0 = \mathbf{Y}_0,
    \end{aligned}
\right.
\end{equation}
\noindent where the ionic current at step $n+1$ is computed by second-order interpolation exploiting the values of the ionic terms $I^n$ and $I^{n+1}$ evaluations at the previous steps, as follows:
\begin{equation}
\begin{aligned}
  &\mathbf{I}_\mathrm{stim}^{n+1} = \frac{3}{2}\mathbf{I}^{n} - \frac{1}{2} \mathbf{I}^{n-1}.
\end{aligned}
\label{eq:Istim}
\end{equation}
A computational advantage of such a semi-implicit scheme is that the stiffness and mass matrices can be assembled only at the beginning of the computation \cite{pezzuto2016space}. 

By exploiting the definitions introduced in Section \ref{sec:4} and the interpolation of the ionic current (ICI) framework \cite{pathmanathan2011significant}, the ionic vector is defined as follows:
\begin{equation*}
      [\mathbf{I}]^{n}_{j} = (I_\mathrm{{ion}}^{n},\varphi_j)_{\Omega} =  ( I^{n}_\mathrm{Na} + I^{n}_\mathrm{K} + I^{n}_\mathrm{Ca},\varphi_j)_{\Omega}, \quad  j = 1,...,N_h.
\end{equation*}
\noindent The time discretization of the problem considering the Barreto-Cressman ionic model is done by treating in an implicit way the diffusive component and in an explicit way the ionic forcing component. For this coupled problem, we also exploit a semi-implicit second-order method as described above. The fully discrete formulation reads as in Equation~\eqref{eq:Full_discrete_complete}.

\section{Numerical results}
\label{sec:5}
In this section, we present a set of numerical tests aimed at assessing the numerical performance of the method in approximating pathophysiological scenarios of brain electrophysiology.
The initial data for the potential are defined such that the unstable region of grey matter has an initial potential $u^0 = -50\,\mathrm{mV}$ while the remaining part of the domain has $u^0 = -67\,\mathrm{mV}$, as done in \cite{barreto2011ion}. All the other variables of the ionic model are initialized as in Table~\ref{table:BC_IC}, while the parameters are set as in Table~\ref{table:G2}. In Table~\ref{table:conductivity}, we report the conductivity values taken exploiting the bulk conductivity tensor \cite{potse2006comparison} for intracellular end extracellular conductivities taken from \cite{schreiner2022simulating}. The numerical
simulations have been obtained on the open-source Lymph library \cite{antonietti2024textttlymph}, implementing
the PolyDG method for multiphysics. 
\begin{table}[!htb]
\begin{minipage}[t]{0.47\linewidth}
\centering
    \caption{Initial conditions for the coupled monodomain Barreto-Cressman ionic model. Values taken from \cite{barreto2011ion}.}\label{table:BC_IC}
    \centering 
    \begin{tabular}{lll}
    \toprule
   \textbf{Variable} &\textbf{Inital Value} & \textbf{Units} \\
    \midrule
    $u_n^0$ & $-50$ & $\mathrm{mV}$ \\
    $u_s^0$ & $-67$ & $\mathrm{mV}$ \\
    $n^0$ &  $15.5$ & $\mathrm{mM}$ \\
    $c^0$ & $0$ & $\mathrm{mM}$     \\
    $k^0$ & $7.8$ & $\mathrm{mM}$\\
    $g^{n,0}$ & $0.0936$ & -\\
    $g^{c,0}$  &  $0.08553$ & -\\
    $g^{k,0}$  &  $0.96859$ & -\\
    \bottomrule
    \end{tabular}
\end{minipage}\hfill
\begin{minipage}[t]{0.5\linewidth}
\centering 
\caption{Values for the parameters of the Bulk conductivity tensor from \cite{schreiner2022simulating}.}
\label{table:conductivity}%
\vspace{10mm}
\begin{tabular}{llll}  
\toprule
 \textbf{Tissue type} & \textbf{$\sigma_n$ } $[\mathrm{Sm^{-1}}]$  & \textbf{$\sigma_t$} $[\mathrm{Sm^{-1}}]$ \\
\midrule
    Grey matter     & 0.0735 & 0.0735 \\
    White matter    &  0.0557 & 0.0139 \\
\bottomrule
\end{tabular}
\end{minipage}
\end{table}
\subsection{Test case 1: Simplified computational domain, physiological/pathological coefficients}
The behavior of white and grey matter in response to internal or external stimuli, such as an ion imbalance or external brain stimulation, differs because of the preponderance of cell bodies in grey matter and axons in white matter. This makes white matter strongly anisotropic, with conductivity tensors following specific preferential directions depending on the part of the tissue analyzed \cite{mardal2022mathematical}. By defining $\Omega_0$ as the subsection of unstable grey matter, $\Omega_\mathrm{WMV}$ as the subsection of white matter with vertical anisotropy, and $\Omega_\mathrm{WMH}$ as the subsection of white matter with horizontal anisotropy; defined as in Table \ref{table:domain1}. \begin{figure}[h!]
\begin{subfigure}[b]{0.33\textwidth}
\centering
    \begin{tabular}{ll}  
    \toprule
   \textbf{Variable} &\textbf{Subregion}  \\
\midrule
    {$\Omega$} & (0,1)x(0,1) \\
    {$\Omega_{0}$} & (0,1)x(0.4,0.6)  \\
    {$\Omega_{\text{WMV}}$} & (0,0.5)x(0,0.4)  \\
    {$\Omega_{\text{WMH}}$} & (0.5,1)x(0,0.4)  \\
\bottomrule
    \end{tabular}
    \vspace{4mm}
    \caption{Distinction of different tissues.}
    \label{table:domain1}
\end{subfigure}
\begin{subfigure}[b]{0.33\textwidth}
\centering
\includegraphics[scale=0.17]{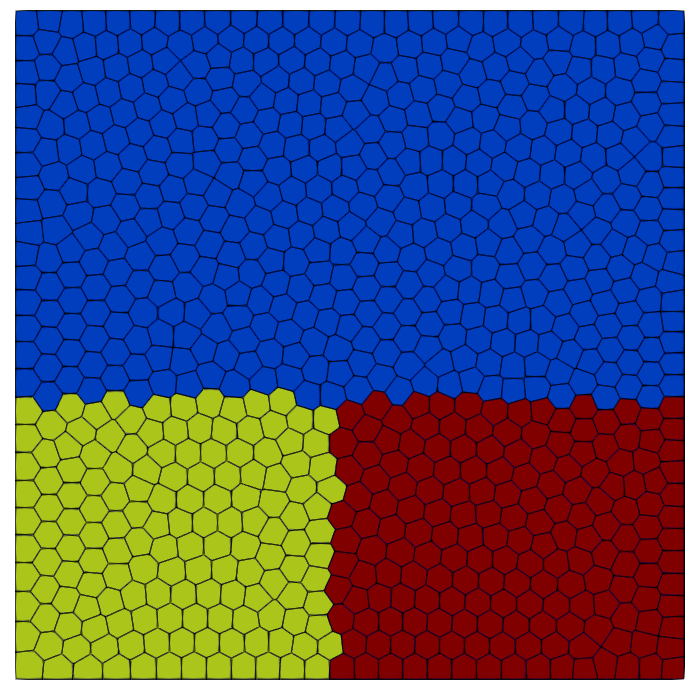} \caption{Polytopal grid} \label{fig:comput_domain_square}
\end{subfigure}\hfill
\begin{subfigure}[b]{0.33\textwidth}
\centering
\includegraphics[scale=0.135]{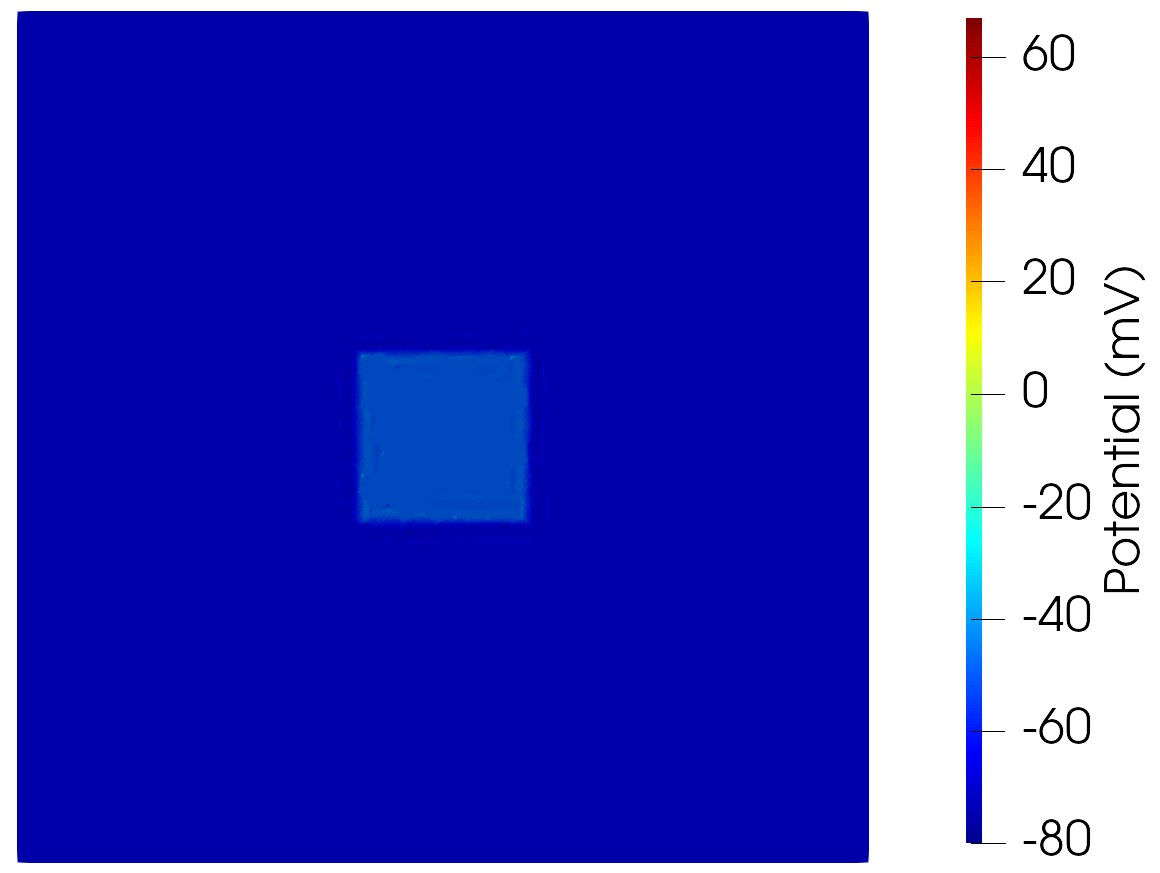} \caption{Initial condition for $u$} \label{fig:comput_domain_IC}
\end{subfigure}
    \caption{Test case 1: (\ref{fig:comput_domain_square}) computational domain and corresponding grid, grey matter region (blue), horizontal anisotropic white matter region (lime), vertical anisotropic white matter (red), (\ref{fig:comput_domain_IC}) initial condition of the transmembrane potential.}
    \label{fig:mesh_brain_bc}
\end{figure}
To investigate the effect of heterogeneity on the numerical approximation, we consider a domain of size $(0,1)\times(0,1)\;\mathrm{cm}$ that is subdivided into subregions characterized by different values of the conductivity tensors. Specifically, the conductivities values reported in Table~\ref{table:conductivity} encode the difference between the white and grey matters. Moreover, within the white matter, we add a further differentiation in vertical and horizontal anisotropy, with a conductivity tensor that has symmetrically opposite directions. 
\begin{figure}[h!]
    \begin{subfigure}[b]{\textwidth}
    \centering
    \includegraphics[scale=0.28]{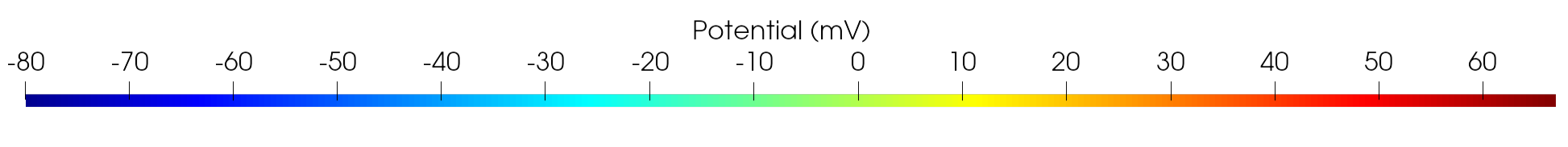}
    \end{subfigure}
    \begin{subfigure}[b]{0.33\textwidth}
      \centering
    \includegraphics[scale=0.22]{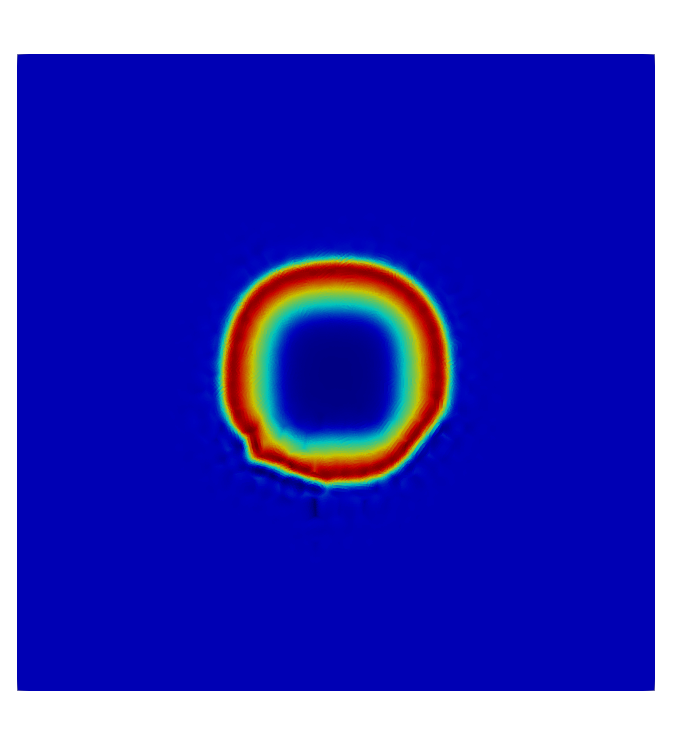} 
    \caption{$t=3 \; \mathrm{ms}$}\label{fig:2D_1}
    \end{subfigure}\hfill
    \begin{subfigure}[b]{0.33\textwidth}
      \centering
    \includegraphics[scale=0.22]{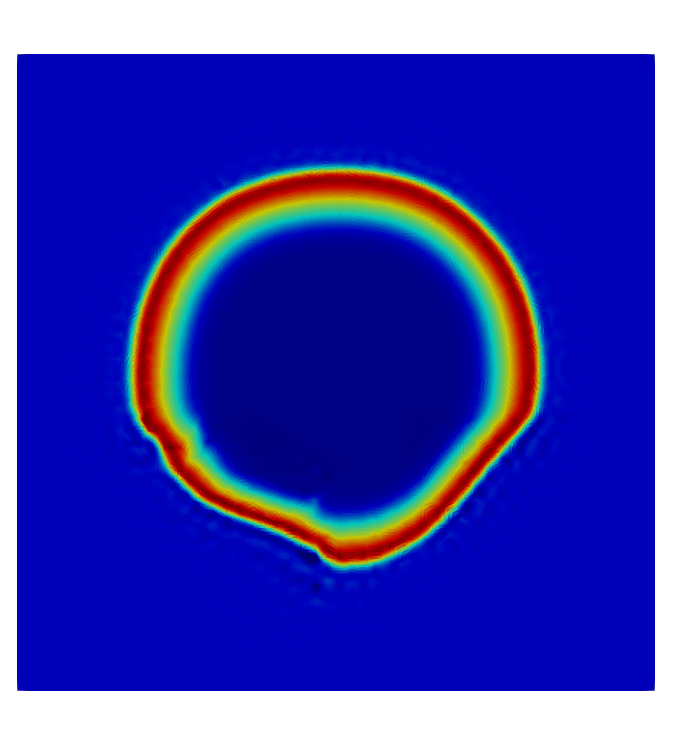} 
    \caption{$t=4.5 \;\mathrm{ms}$}\label{fig:2D_2}
    \end{subfigure}\hfill
    \begin{subfigure}[b]{0.33\textwidth}
      \centering
    \includegraphics[scale=0.22]{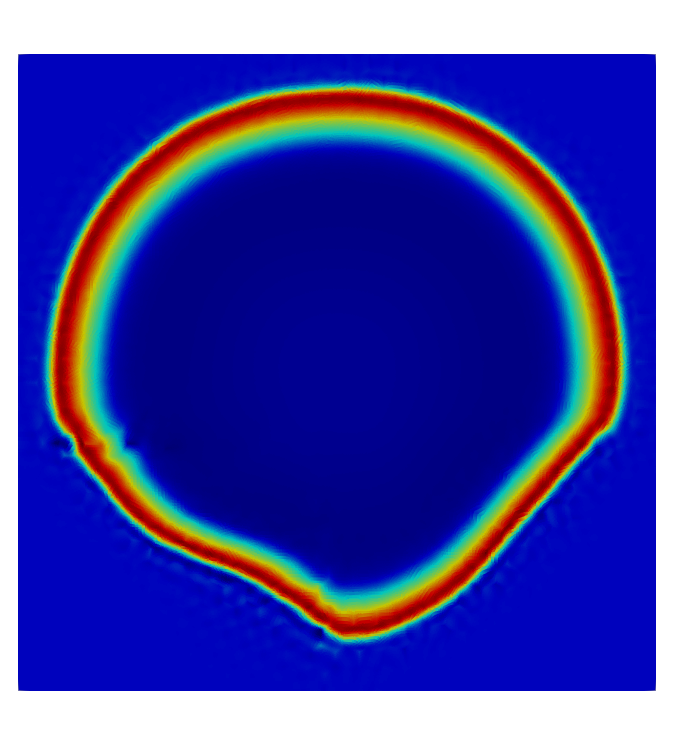} \caption{$t=6\;\mathrm{ms}$}\label{fig:2D_3} 
\end{subfigure}
\caption{Test case 1: Snapshots of the transmembrane potential in gray-white matter tissue at three different time instants.}
\label{fig:bc_evolution_2D}
\end{figure}  

The domain is thus divided into three regions plus an unstable area, introduced into the grey matter, where a potential imbalance is assumed as an initial condition. Then, the change in the potential generated by the unstable area evolves freely within the two different tissue types. The simulation is performed considering a mesh with 800 elements and with $\Delta t = 10^{-3}$ and $T=20ms$.
Figure~\ref{fig:mesh_brain_bc} shows the computational domain and grid used for the simulation. 
Subdomain distinction of unstable grey matter ($\Omega_{0}$), stable grey matter, white matter with vertical anisotropy ($\Omega_{\text{WMV}}$) and white matter with horizontal anisotropy ($\Omega_{\text{WMH}}$) is reported in Figure~\ref{fig:comput_domain_square}.
\noindent The evolution of the transmembrane potential over the domain is shown in Figure~\ref{fig:bc_evolution_2D}. The potential evolves isotropically in the white matter and anisotropically in the grey matter, actively propagating the wavefront originated from the unstable grey matter throughout the tissue.   
In Figure~\ref{fig:bc_evolution}, we present the transmembrane potential profile along the domain's diagonal. The speed of the transmembrane potential in the white matter is observed to be lower and generates, as expected, small oscillations due to the low conductivity values.

\begin{figure}[h!]
\begin{subfigure}[b]{0.33\textwidth}
 \resizebox{0.95\textwidth}{!}{
\begin{tikzpicture}{
      \begin{axis}[
        width=3.875in,
        height=3.56in,
        at={(2.6in,1.099in)},
        scale only axis,
        xmin=-0.05,
        xmax=1.5,
        xminorticks=true,
        xlabel = { $l$ [cm]},
        ylabel = { $u(T)$},
        ymin=-90,
        ymax=70,
        yminorticks=true,
        axis background/.style={fill=white},
        title style={font=\bfseries},
        xmajorgrids,
        xminorgrids,
        ymajorgrids,
        yminorgrids,
        legend style={legend cell align=left, align=left, draw=white!15!black}
        ]
        \addplot        table[x=arc_length,y= Field 2, col sep=comma,mark=none] {csv/wm_gm_9_2.csv};
        \draw[dashed,red,line width=2.0pt] (0.7071,-90) -- (0.7071,70);
      \end{axis}}
    \end{tikzpicture}}
     \caption{$t=3 \;\mathrm{ms}$} \label{fig:0D_1}
\end{subfigure}\hfill
\begin{subfigure}[b]{0.33\textwidth}
 \resizebox{0.95\textwidth}{!}{
     \begin{tikzpicture}{
      \begin{axis}[
        width=3.875in,
        height=3.56in,
        at={(2.6in,1.099in)},
        scale only axis,
        xmin=-0.05,
        xmax=1.5,
        xminorticks=true,
        xlabel = { $l$ [cm]},
        ylabel = { $u(T)$},
        ymin=-90,
        ymax=70,
        yminorticks=true,
        axis background/.style={fill=white},
        title style={font=\selectfont},
        xmajorgrids,
        xminorgrids,
        ymajorgrids,
        yminorgrids,
        legend style={legend cell align=left, align=left, draw=white!15!black}
        ]
        \addplot
        table[x=arc_length,y= Field 2, col sep=comma,mark=none] {csv/wm_gm_16_2.csv}; 
        \draw[dashed,red,line width=2.0pt] (0.7071,-90) -- (0.7071,70);
      \end{axis}}
    \end{tikzpicture}}
     \caption{$t=4.5 \;\mathrm{ms}$} \label{fig:0D_2}
    \end{subfigure}\hfill
    \begin{subfigure}[b]{0.33\textwidth}
  \resizebox{0.95\textwidth}{!}{
    \begin{tikzpicture}{
      \begin{axis}[
        width=3.875in,
        height=3.56in,
        at={(2.6in,1.099in)},
        scale only axis,
        xmin=-0.05,
        xmax=1.5,
        xminorticks=true,
        xlabel = { $l$ [cm]},
        ylabel = { $u(T)$},
        ymin=-90,
        ymax=70,
        yminorticks=true,
        axis background/.style={fill=white},
        title style={font=\bfseries},
        xmajorgrids,
        xminorgrids,
        ymajorgrids,
        yminorgrids,
        legend style={legend cell align=left, align=left, draw=white!15!black}
        ]
        \addplot
        table[x=arc_length,y= Field 2, col sep=comma,mark=none] {csv/wm_gm_28_2.csv};
        \draw[dashed,red,line width=2.0pt] (0.7071,-90) -- (0.7071,70);
      \end{axis}}
    \end{tikzpicture}}
     \caption{$t=6 \;\mathrm{ms}$} \label{fig:0D_3}
    \end{subfigure}
\caption{Test case 1: Profile of the transmembrane potential along the domain's diagonal.}
\label{fig:bc_evolution}
    \end{figure}
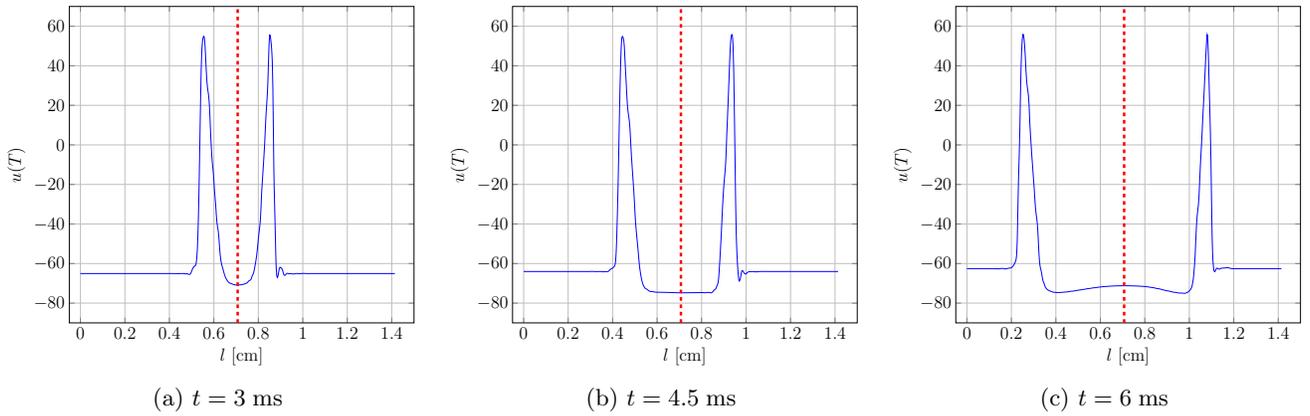
    
\subsubsection*{Approximation of the conduction speed}
\noindent For a numerical verification, we study the evolution of the conduction velocities of the monodomain problem coupled with the Barreto-Cressman ion model as a function of the spatial discretization and the polynomial degree used. Conduction velocity values are calculated by measuring the length of the space traveled by the wavefront in a fixed amount of time along one of the two diagonals. 
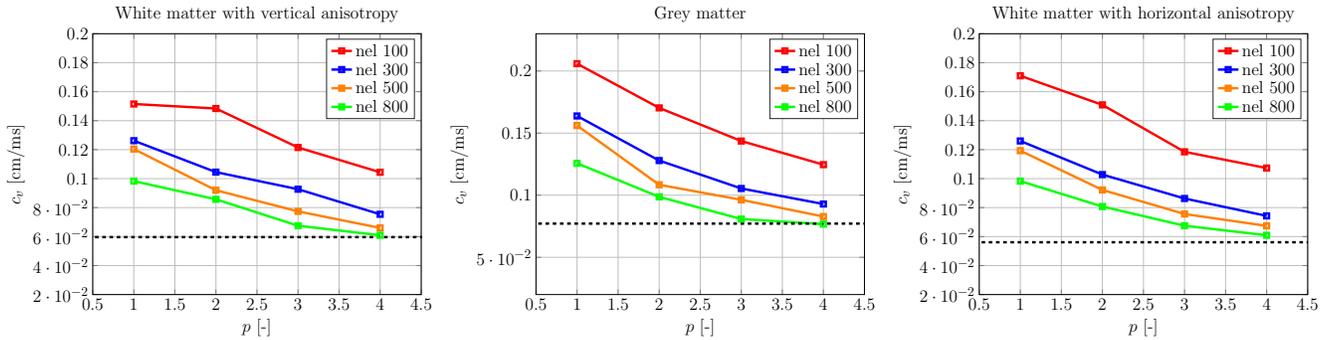
\begin{figure}[h!]
\centering
\begin{subfigure}[b]{0.33\textwidth}
    \resizebox{\textwidth}{!}{
\begin{tikzpicture}{
\begin{axis}[%
width=3.775in,
height=3in,
at={(1.733in,0.687in)},
scale only axis,
xmin=0.5,
xmax=4.5,
xminorticks=true,
xlabel = { $p$ [-]},
ylabel = { $c_v$ [cm/ms]},
ymin=0.02,
ymax=0.2,
yminorticks=true,
axis background/.style={fill=white},
title={\color{black} White matter with vertical anisotropy },
xmajorgrids,
xminorgrids,
ymajorgrids,
yminorgrids,
legend style={legend cell align=left, align=left, draw=white!15!black}
]
\addplot [color=red, mark=square,line width=2.0pt]
  table[row sep=crcr]{%
    1 0.1515\\
    2 0.1484\\
    3 0.1215\\
    4 0.1044\\
};
\addlegendentry{nel $100$}
\addplot [color=blue, mark=square,line width=2.0pt]
  table[row sep=crcr]{%
    1 0.1262\\
    2 0.1045\\
    3 0.0927\\
    4 0.0754\\
};
\addlegendentry{nel $300$}
\addplot [color=orange,mark=square, line width=2.0pt]
  table[row sep=crcr]{%
    1 0.1204\\
    2 0.092\\
    3 0.0774\\
    4 0.066\\
};
\addlegendentry{nel $500$}
\addplot [color=green, mark=square, line width=2.0pt]
  table[row sep=crcr]{%
    1 0.098285\\
    2 0.08574\\
    3 0.067502\\
    4 0.060917\\
};
\addlegendentry{nel $800$} 
\draw[dashed,black, line width=2.0pt] (0,0.05964) -- (5,0.05964);      
\end{axis}}
\end{tikzpicture}}
\end{subfigure}\hfill
\begin{subfigure}[b]{0.33\textwidth}
    \resizebox{\textwidth}{!}{
\begin{tikzpicture}{
\begin{axis}[%
width=3.775in,
height=3in,
at={(1.733in,0.687in)},
scale only axis,
xmin=0.5,
xmax=4.5,
xminorticks=true,
xlabel = { $p$ [-]},
ylabel = { $c_v$ [cm/ms]},
ymin=0.02,
ymax=0.23,
yminorticks=true,
axis background/.style={fill=white},
title={\color{black} Grey matter},
xmajorgrids,
xminorgrids,
ymajorgrids,
yminorgrids,
legend style={legend cell align=left, align=left, draw=white!15!black}
]
\addplot [color=red,mark=square, line width=2.0pt]
  table[row sep=crcr]{%
    1  0.2059 \\
    2  0.1703  \\
    3  0.1436  \\
    4  0.1246  \\
};
\addlegendentry{nel $100$}
\addplot [color=blue, mark=square,line width=2.0pt]
  table[row sep=crcr]{%
    1 0.1638\\
    2 0.1279\\
    3 0.1054\\
    4 0.0929\\
};
\addlegendentry{nel $300$}
\addplot [color=orange,mark=square, line width=2.0pt]
  table[row sep=crcr]{%
    1 0.1561\\
    2 0.1084\\
    3 0.0963\\
    4 0.0828\\
};
\addlegendentry{nel $500$}
\addplot [color=green, mark=square, line width=2.0pt]
  table[row sep=crcr]{%
    1 0.1257\\
    2 0.098676\\
    3 0.080789\\
    4 0.076719\\
};
\addlegendentry{nel $800$} 
\draw[dashed,black, line width=2.0pt] (0,0.077129) -- (5,0.077129);   
\end{axis}}
\end{tikzpicture}}
\end{subfigure}\hfill
\begin{subfigure}[b]{0.33\textwidth}
    \resizebox{\textwidth}{!}{
\begin{tikzpicture}{
\begin{axis}[%
width=3.775in,
height=3in,
at={(1.733in,0.687in)},
scale only axis,
xmin=0.5,
xmax=4.5,
xminorticks=true,
xlabel = { $p$ [-]},
ylabel = { $c_v$ [cm/ms]},
ymin=0.02,
ymax=0.2,
yminorticks=true,
axis background/.style={fill=white},
title={\color{black}  White matter with horizontal anisotropy},
xmajorgrids,
xminorgrids,
ymajorgrids,
yminorgrids,
legend style={legend cell align=left, align=left, draw=white!15!black}
]
\addplot [color=red, mark=square, line width=2.0pt]
  table[row sep=crcr]{%
    1 0.171\\
    2 0.1509\\
    3 0.1185\\
    4 0.1073\\
};
\addlegendentry{nel $100$}  
\addplot [color=blue, mark=square, line width=2.0pt]
  table[row sep=crcr]{%
    1 0.126\\
    2 0.1028\\
    3 0.0863\\
    4 0.0743\\
};
\addlegendentry{nel $300$}   
\addplot [color=orange, mark=square, line width=2.0pt]
  table[row sep=crcr]{%
    1 0.1193\\
    2 0.0922\\
    3 0.0756\\
    4 0.0674\\
};
\addlegendentry{nel $500$} 
\addplot [color=green, mark=square, line width=2.0pt]
  table[row sep=crcr]{%
    1 0.098285\\
    2 0.08074\\
    3 0.067502\\
    4 0.060917\\
};
\addlegendentry{nel $800$} 
\draw[dashed,black, line width=2.0pt] (0,0.0561) -- (5,0.0561);    
\end{axis}}
\end{tikzpicture}} 
\end{subfigure}
\caption{Test case 1: Approximation of the velocity of numerical solution with refinement in $p$ for different tissues: White matter with vertical anisotropy (left) grey matter (center) and white matter with horizontal anisotropy (right).}
\label{fig:cv_BC}
\end{figure}
Since there is no exact solution of the coupled model, we construct a reference simulation on a fine grid composed by 800 elements and exploit a high polynomial degree ($p=6$) to obtain a high-resolution approximation of the solution. Here the penalty parameter is chosen equal to $\eta_0 = 10$.
Figure~\ref{fig:cv_BC} shows the evolution of the wavefront conduction velocity as a function of the polynomial degree $p$ for four different spatial discretizations (characterized by computational meshes formed by $100, 300, 500, 800$ elements, corresponding to a mesh size of $0.18, 0.106, 0.083, 0.0581$ $cm$, respectively) in the three different tissue types described above. Figure~\ref{fig:cv_BC_h} shows the evolution of the conduction velocity as a function of the degrees of freedom of the monodomain discretization. As expected, both increasing the polynomial degree and reducing the mesh size significantly improves the approximation of the conduction velocity.

Moreover, Figure~\ref{fig:cv_BC_h} proves that increasing the polynomial degree is computationally more convenient than refining the mesh, as evidenced by the trend of the discretization based on $300$ elements (blue line) with respect to ones based on $500$ elements (orange line) and $800$ elements (green line), provided that the mesh is not excessively coarse (as in the case of the $100$ elements mesh represented by the red line).

\begin{figure}[h!]
\centering
\begin{subfigure}[b]{0.33\textwidth}
    \resizebox{\textwidth}{!}{
\begin{tikzpicture}{
\begin{axis}[%
width=3.775in,
height=3in,
at={(1.733in,0.687in)},
scale only axis,
xmin=0,
xmax=12800,
xminorticks=true,
xlabel = { ndof [-]},
ylabel = { $c_v$ [cm/ms]},
ymin=0.02,
ymax=0.2,
yminorticks=true,
axis background/.style={fill=white},
title={\color{black} White matter with vertical anisotropy},
xmajorgrids,
xminorgrids,
ymajorgrids,
yminorgrids,
legend style={legend cell align=left, align=left, draw=white!15!black}
]
\addplot [color=red, mark=square,line width=2.0pt]
  table[row sep=crcr]{%
    300 0.1515\\
    900 0.1484\\
    1800 0.1215\\
    3000 0.1044\\
};
\addlegendentry{nel $100$}
\addplot [color=blue, mark=square,line width=2.0pt]
  table[row sep=crcr]{%
    900 0.1262\\
    1800 0.1045\\
    3000 0.0927\\
    4500 0.0754\\
};
\addlegendentry{nel $300$}
\addplot [color=orange,mark=square, line width=2.0pt]
  table[row sep=crcr]{%
    1500 0.1204\\
    3000 0.092\\
    5000 0.0774\\
    7500 0.066\\
};
\addlegendentry{nel $500$}
\addplot [color=green,mark=square, line width=2.0pt]
  table[row sep=crcr]{%
    2400 0.098285\\
    4800 0.08574\\
    8000 0.067502\\
    12000 0.060917\\
};    
\addlegendentry{nel $800$}
\draw[dashed,black, line width=2.0pt] (0,0.05964) -- (12800,0.05964);      
\end{axis}}
\end{tikzpicture}}    
\end{subfigure}\hfill
\begin{subfigure}[b]{0.33\textwidth}
    \resizebox{\textwidth}{!}{
\begin{tikzpicture}{
\begin{axis}[%
width=3.775in,
height=3in,
at={(1.733in,0.687in)},
scale only axis,
xmin=0,
xmax=12800,
xminorticks=true,
xlabel = {ndof [-]},
ylabel = {$c_v$ [cm/ms]},
ymin=0.05,
ymax=0.25,
yminorticks=true,
axis background/.style={fill=white},
title={\color{black} Grey matter},
xmajorgrids,
xminorgrids,
ymajorgrids,
yminorgrids,
legend style={legend cell align=left, align=left, draw=white!15!black}
]
\addplot [color=red,mark=square, line width=2.0pt]
  table[row sep=crcr]{%
    300  0.2059 \\
    900  0.1703  \\
    1800  0.1436  \\
    3000  0.1246  \\
};
\addlegendentry{nel $100$}
\addplot [color=blue, mark=square,line width=2.0pt]
  table[row sep=crcr]{%
    900 0.1638\\
    1800 0.1279\\
    3000 0.1054\\
    4500 0.0929\\
};
\addlegendentry{nel $300$}
\addplot [color=orange,mark=square, line width=2.0pt]
  table[row sep=crcr]{%
    1500 0.1561\\
    3000 0.1084\\
    5000 0.0963\\
    7500 0.0828\\
};
\addlegendentry{nel $500$}
\addplot [color=green,mark=square, line width=2.0pt]
  table[row sep=crcr]{%
    2400 0.1257\\
    4800 0.098676\\
    8000 0.080789\\
    12000 0.076719\\
}; 
\addlegendentry{nel $800$}
\draw[dashed,black, line width=2.0pt] (0,0.077129) -- (12800,0.077129);   
\end{axis}}
\end{tikzpicture}}
\end{subfigure}\hfill
\begin{subfigure}[b]{0.33\textwidth}
    \resizebox{\textwidth}{!}{
\begin{tikzpicture}{
\begin{axis}[%
width=3.775in,
height=3in,
at={(1.733in,0.687in)},
scale only axis,
xmin=0,
xmax=12800,
xminorticks=true,
xlabel = { ndof [-]},
ylabel = { $c_v$ [cm/ms]},
ymin=0.02,
ymax=0.2,
yminorticks=true,
axis background/.style={fill=white},
title={\color{black} White matter with horizontal anisotropy},
xmajorgrids,
xminorgrids,
ymajorgrids,
yminorgrids,
legend style={legend cell align=left, align=left, draw=white!15!black}
]
\addplot [color=red, mark=square, line width=2.0pt]
  table[row sep=crcr]{%
    300 0.171\\
    900 0.1509\\
    1800 0.1185\\
    3000 0.1073\\
};
\addlegendentry{nel $100$}  
\addplot [color=blue, mark=square, line width=2.0pt]
  table[row sep=crcr]{%
    900 0.126\\
    1800 0.1028\\
    3000 0.0863\\
    4500 0.0743\\
};
\addlegendentry{nel $300$}   
\addplot [color=orange, mark=square, line width=2.0pt]
  table[row sep=crcr]{%
    1500 0.1193\\
    3000 0.0922\\
    5000 0.0756\\
    7500 0.0674\\
};
\addlegendentry{nel $500$} 
\addplot [color=green,mark=square, line width=2.0pt]
  table[row sep=crcr]{%
    2400 0.098285\\
    4800 0.08074\\
    8000 0.067502\\
    12000 0.060917\\
};
\addlegendentry{nel $800$}
\draw[dashed,black, line width=2.0pt] (0,0.0561) -- (12800,0.0561);    
\end{axis}}
\end{tikzpicture}}    
\end{subfigure}
\caption{Test case 1: Approximation of the velocity of numerical solution as a function of the total number of degrees of freedom (ndof) for different tissues: White matter with vertical anisotropy (left) grey matter (center) and white matter with horizontal anisotropy (right).}
\label{fig:cv_BC_h}
\end{figure}
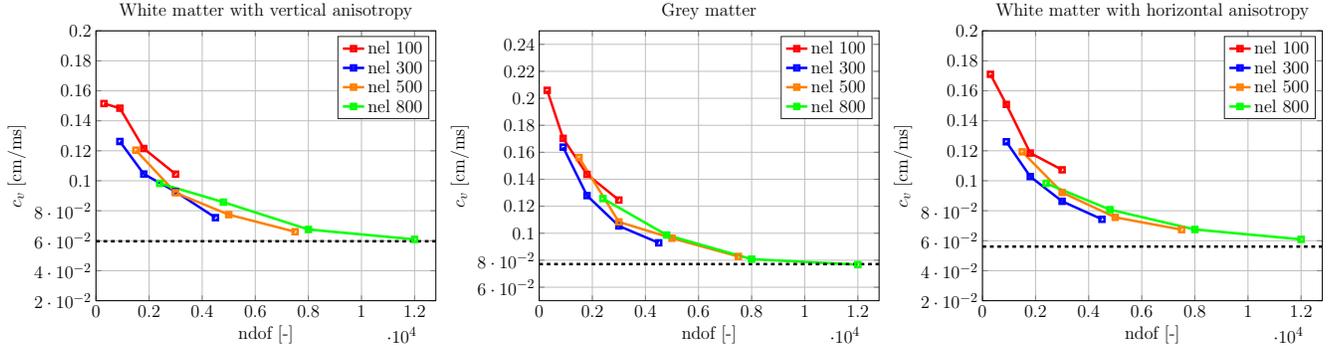

\subsection{Test case 1.1: Influence of potassium dynamics on seizure evolution}
An analysis is then performed on the evolution of transmembrane potential in relation to ionic imbalances generated in the cell membrane, particularly potassium imbalances. The influence of extracellular potassium on neuronal excitability is widely recognized, and dysregulation of this ion is implicated in several forms of epilepsy. The variable $K_\mathrm{bath}$ is modified to simulate this behavior, thus constructing three differentiated test cases and analyzing the evolution of the resulting potential values for $K_\mathrm{bath} = 4mM$ and $K_\mathrm{bath}=8mM$. We consider three configurations: the first two are characterized by a region of gray matter tissue in which the value of the potassium stabilization variable is constant over the entire domain, namely $K_\mathrm{bath}=4mM$ and $K_\mathrm{bath}=8mM$, respectively. The third one is constructed such that the unstable gray matter region (encoded into the initial condition) coincides with the region of high potassium concentration, as shown in Figure \ref{table:domain} (a).
 By defining $\Omega_0$ as the subsection of unstable grey matter, the domain is defined as in Table \ref{table:domain}.

\begin{figure}[ht]
\begin{subfigure}[b]{0.33\textwidth}
\centering
\begin{tabular}{ll}
    \toprule
   \textbf{Variable} &\textbf{Subregion}  \\
\midrule
    {$\Omega$} & (0,1)x(0,1) \\
    {$\Omega_{0}$} & (0,0.4)x(0.4,0.4)  \\
\bottomrule
\end{tabular}
\vspace{9mm}
\caption{Distinction of different tissues.}
\label{table:domain}
\end{subfigure}
\begin{subfigure}[b]{0.33\textwidth}
\centering
\includegraphics[scale=0.115]{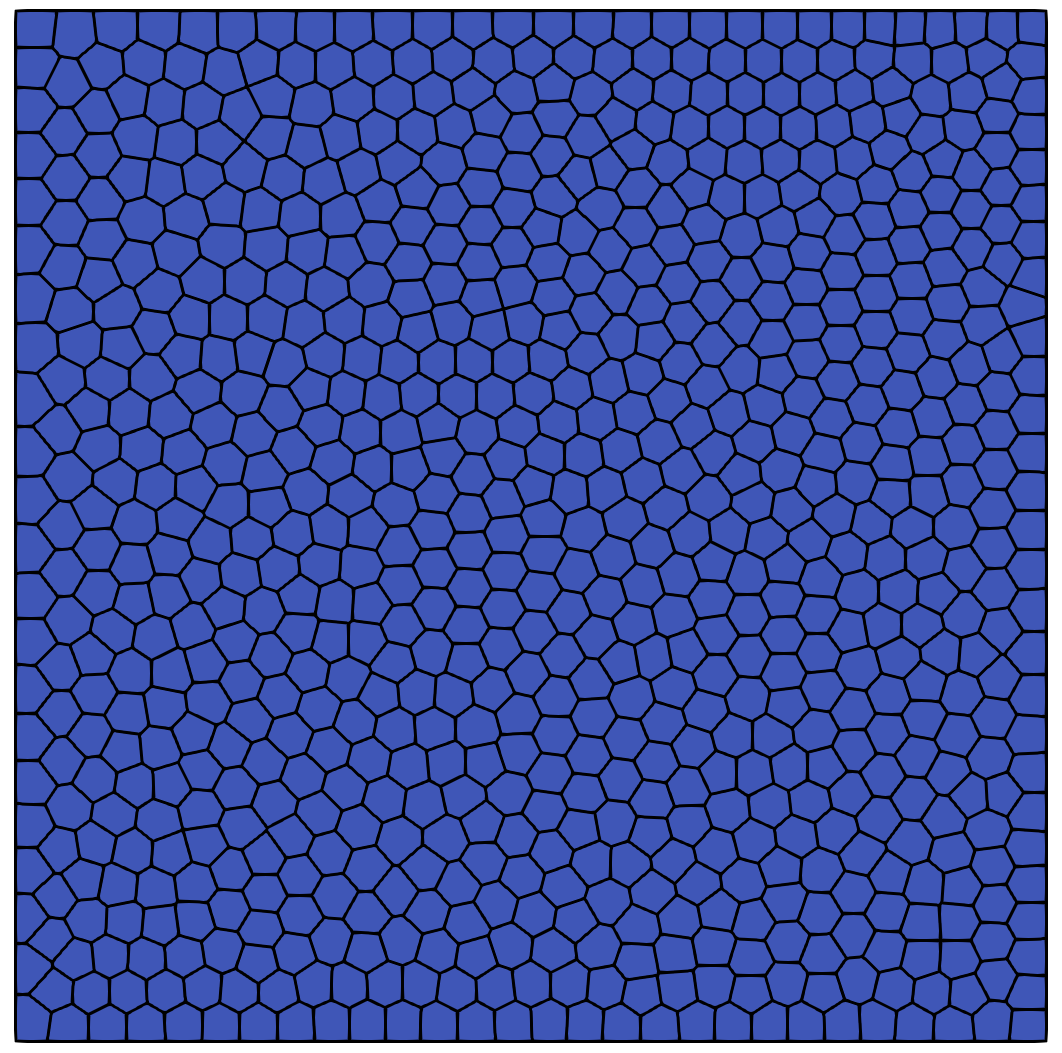} \caption{Polytopal grid} \label{fig:comput_domain_square2}
\end{subfigure}\hfill
\begin{subfigure}[b]{0.33\textwidth}
\centering
\includegraphics[scale=0.115]{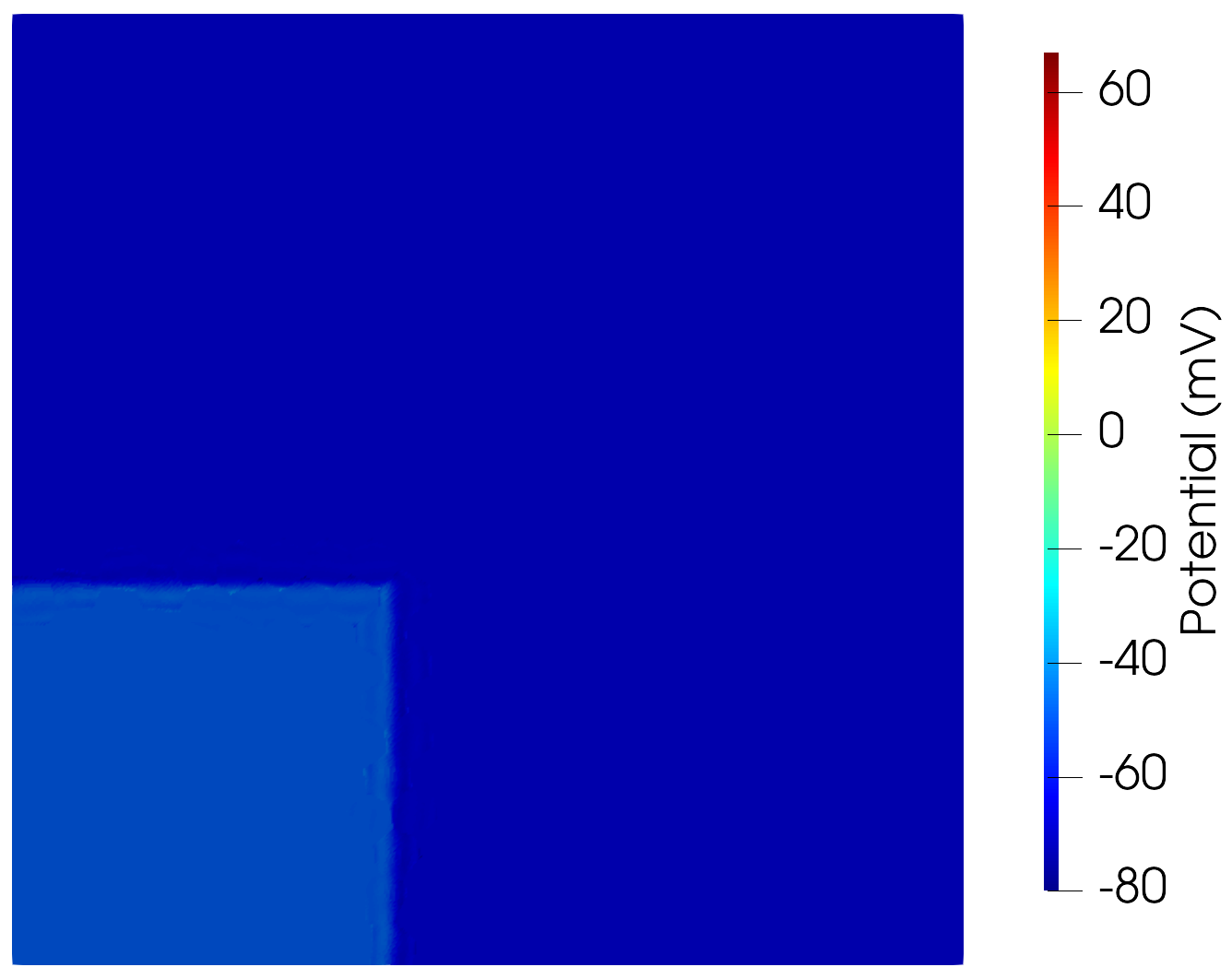} \caption{Initial condition for $u$} \label{fig:comput_domain_IC2}
\end{subfigure}
    \caption{Test case 1.1: (\ref{fig:comput_domain_square2}) computational domain and corresponding grid, grey matter region (blue), (\ref{fig:comput_domain_IC2}) initial condition of the transmembrane potential.}
    \label{fig:mesh_brain_bc2}
\end{figure}

\begin{figure}[ht]
\begin{subfigure}[b]{\textwidth}
    \centering
    \includegraphics[scale=0.28]{photos/scale.png}
    \end{subfigure}\hfill
    \begin{subfigure}[b]{0.33\textwidth}
    \centering
\includegraphics[scale=0.125]{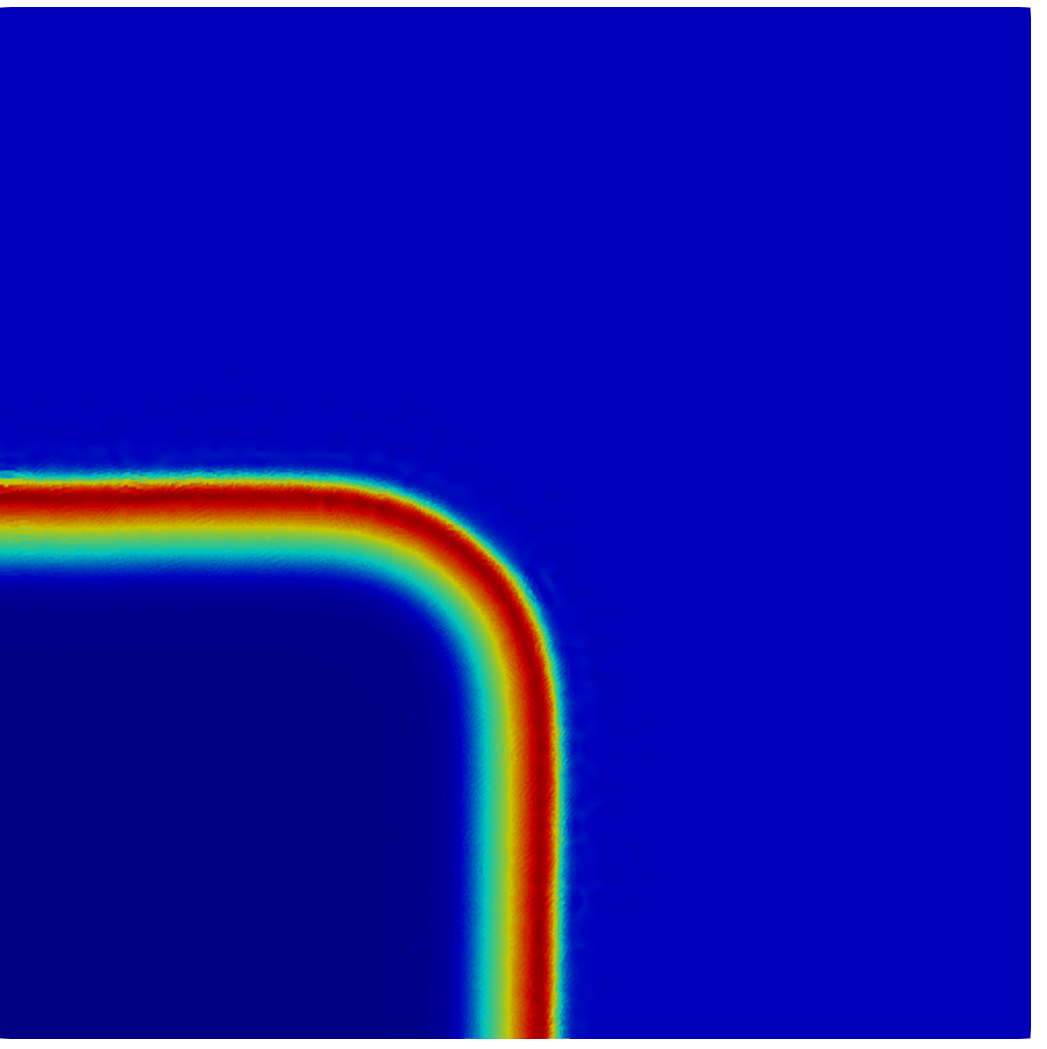} \caption{$t = 3 \mathrm{ms}$}  \label{fig:comput_domain_IC}
  \end{subfigure}\hfill
      \begin{subfigure}[b]{0.33\textwidth}
      \centering
\includegraphics[scale=0.125]{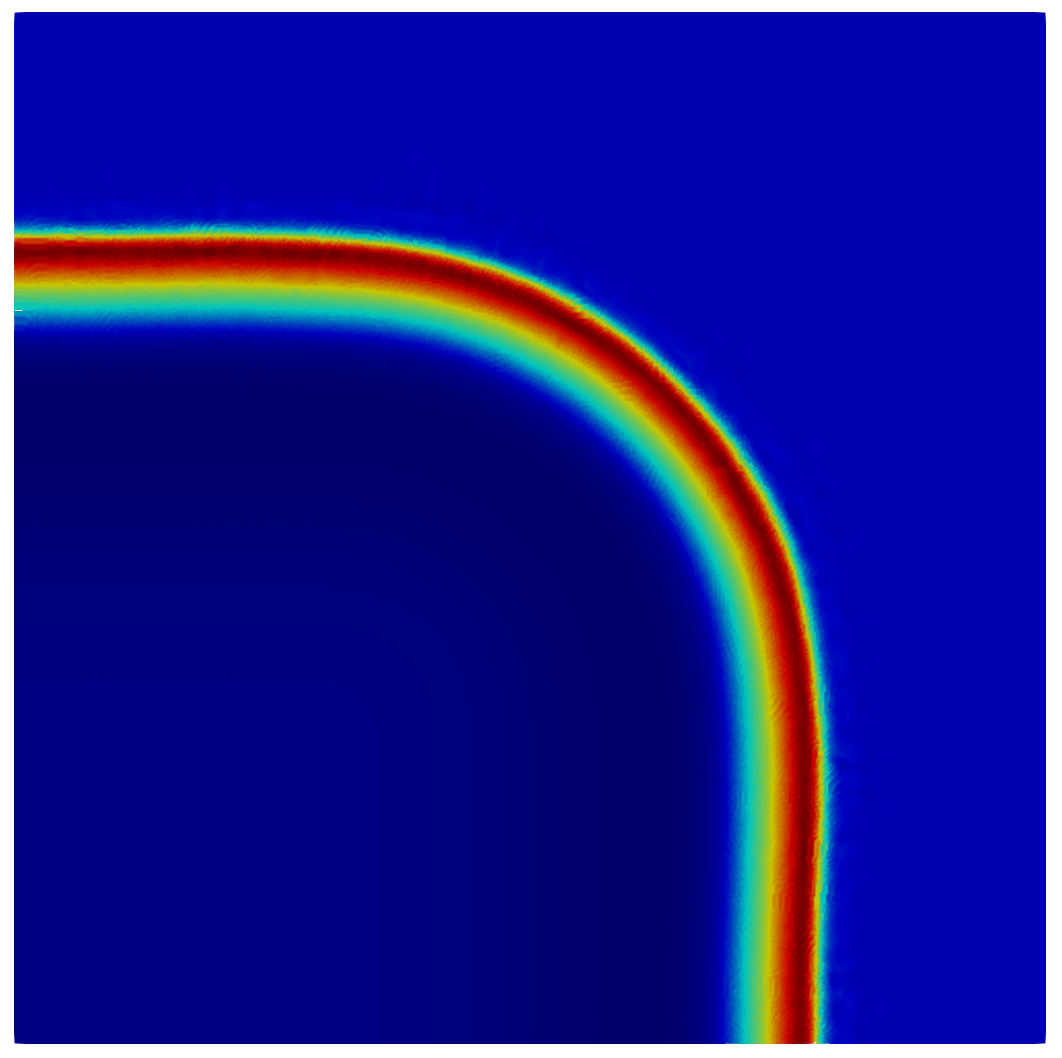} \caption{$t = 6 \mathrm{ms}$}  \label{fig:comput_domain_IC}
  \end{subfigure}\hfill
      \begin{subfigure}[b]{0.33\textwidth}
      \centering
\includegraphics[scale=0.125]{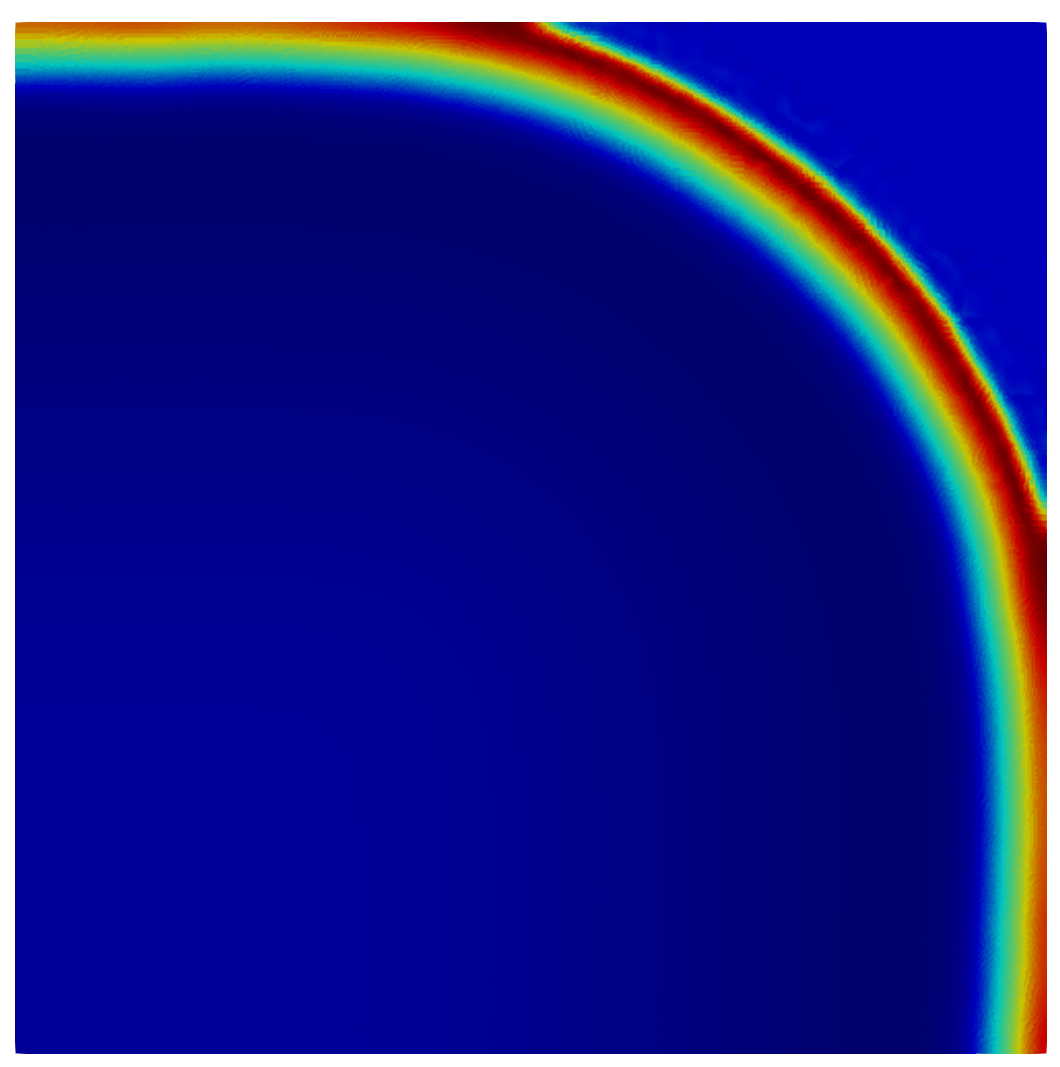} \caption{$t = 9 \mathrm{ms}$}  \label{fig:comput_domain_IC}
  \end{subfigure}
  \caption{Test case 1.1: Snapshots of the transmembrane potential in gray matter tissue at three different time instants with $K_\mathrm{bath}=4 \;\mathrm{mM}$.}
\end{figure}
To appreciate the influence of potassium on a seizure episode, we show in Figure \ref{fig:K} the evolution of the transmembrane potential profile in $(\hat{x},\hat{y})= (0.7,0.7)$. Simulations are performed considering a domain $\Omega = (0,1)^2$ while the polygonal mesh is characterized by 500 elements. For all test cases, fifth-degree polynomials ($p=5$) are exploited for discretization while $\Delta t = 2.5e^{-3}$.
In Figure \ref{fig:k4}, a simulation is presented in which $K_\mathrm{bath}=4mM$ is imposed throughout the domain.
In this setting, a high-frequency drastic change in the potential is observed during the first milliseconds, followed by a regularization process leading to a steady state. The potential finally reaches an asymptotic value, indicating the end of the self-induced episode. 
In Figure \ref{fig:k8}, instead, we present a simulation where the value of $K_\mathrm{bath}$ has been increased up to $8mM$ throughout the whole domain. In this setting, we observe a different behavior of the transmembrane potential evolution from the previous test case, with a larger number of self-induced activations characterized by an increasing frequency. Finally, in Figure \ref{fig:k48}, we present a simulation where the value of $K_\mathrm{bath}$ is defined as a function of the unstable region $\Omega_{0}$. Numerical results show that the unstable region with $K_\mathrm{bath}=8mM$ is dominant for the characterization of the potential dynamics also in regions not characterized by this condition ($\Omega \setminus \Omega_{0}$ with $K_\mathrm{bath}=4mM$). Specifically, this simulation shows that the behavior of the transmembrane potential is close to the one of Figure \ref{fig:k8}. 
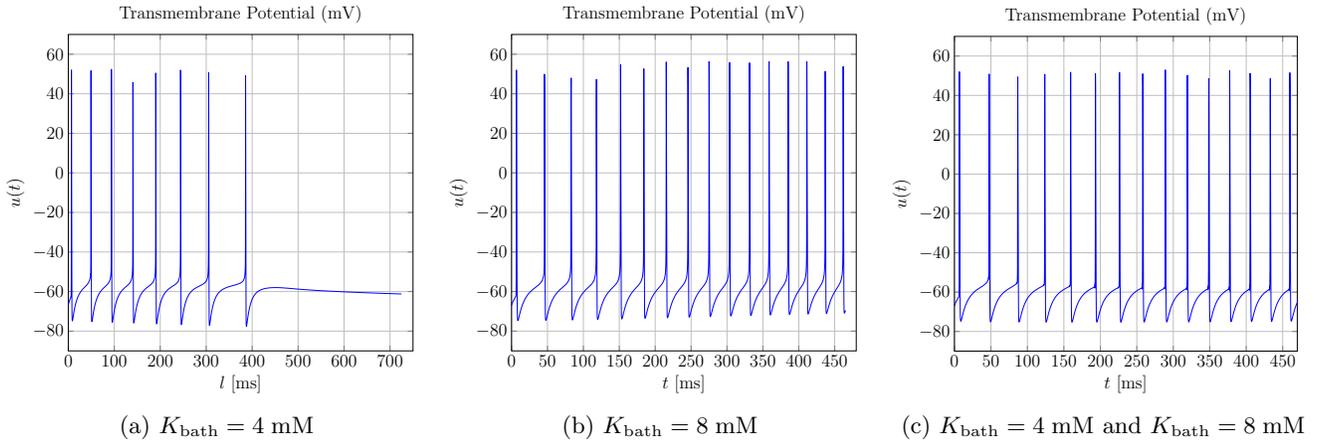
\begin{figure}[h!]
\begin{subfigure}[b]{0.33\textwidth}
 \resizebox{0.95\textwidth}{!}{
\begin{tikzpicture}{
      \begin{axis}[
        width=3.875in,
        height=3.56in,
        at={(2.6in,1.099in)},
        scale only axis,
        xmin=-0.05,
        xmax=750,
        xminorticks=true,
        xlabel = { $l$ [ms]},
        ylabel = { $u(t)$},
        ymin=-90,
        ymax=70,
        yminorticks=true,
        axis background/.style={fill=white},
        title={\color{black}  Transmembrane Potential (mV)},
        xmajorgrids,
        xminorgrids,
        ymajorgrids,
        yminorgrids,
        legend style={legend cell align=left, align=left, draw=white!15!black}
        ]
        \addplot        table[x=t,y=v, col sep=comma,mark=none] {csv/sol_k4.csv};
      \end{axis}}
    \end{tikzpicture}}
     \caption{$K_\mathrm{bath} = 4 \;\mathrm{mM} $} \label{fig:k4}
\end{subfigure}\hfill
\begin{subfigure}[b]{0.33\textwidth}
 \resizebox{0.95\textwidth}{!}{
     \begin{tikzpicture}{
      \begin{axis}[
        width=3.875in,
        height=3.56in,
        at={(2.6in,1.099in)},
        scale only axis,
        xmin=-0.05,
        xmax=480,
        xminorticks=true,
        xlabel = { $t$ [ms]},
        ylabel = { $u(t)$},
        ymin=-90,
        ymax=70,
        yminorticks=true,
        axis background/.style={fill=white},
        title={\color{black} Transmembrane Potential (mV)},
        xmajorgrids,
        xminorgrids,
        ymajorgrids,
        yminorgrids,
        legend style={legend cell align=left, align=left, draw=white!15!black}
        ]
        \addplot
        table[x=t,y=v, col sep=comma,mark=none] {csv/k8.csv}; 
      \end{axis}}
    \end{tikzpicture}}
     \caption{$K_\mathrm{bath} = 8 \;\mathrm{mM} $} \label{fig:k8}
    \end{subfigure}\hfill
    \begin{subfigure}[b]{0.33\textwidth}
  \resizebox{0.95\textwidth}{!}{
    \begin{tikzpicture}{
      \begin{axis}[
        width=3.875in,
        height=3.56in,
        at={(2.6in,1.099in)},
        scale only axis,
        xmin=-0.05,
        xmax=470,
        xminorticks=true,
        xlabel = { $t$ [ms]},
        ylabel = { $u(t)$},
        ymin=-90,
        ymax=70,
        yminorticks=true,
        axis background/.style={fill=white},
        title={\color{black}  Transmembrane Potential (mV)},
        xmajorgrids,
        xminorgrids,
        ymajorgrids,
        yminorgrids,
        legend style={legend cell align=left, align=left, draw=white!15!black}
        ]
        \addplot
        table[x=t,y= v, col sep=comma,mark=none] {csv/k48.csv};
      \end{axis}}
    \end{tikzpicture}}
     \caption{$K_\mathrm{bath} = 4 \;\mathrm{mM} $ and $K_\mathrm{bath} = 8 \;\mathrm{mM} $ } \label{fig:k48}
    \end{subfigure}
\caption{Test case 1.1: Evolution of the transmembrane potential for different values of $K_\mathrm{bath}$ in a specific point of the domain $(\hat{x},\hat{y}) = (0.7,0.7)$.}
\label{fig:K}
    \end{figure}
    
\subsection{Test case 2: brain section and pathological coefficients}
For the second test case, we consider a mesh of the sagittal section of the brain constructed starting from structural Magnetic Resonance Images (MRI) from the OASIS-3 database \cite{lamontagne2019oasis}. The method exploited to construct the mesh \cite{pietro2020hybrid} exploits the agglomeration of triangular meshes to construct polytopal meshes, which can optimally handle complex geometries. 
Taking into account only the brainstem of the section, we simulate the evolution of the transmembrane potential by imposing unstable initial conditions in a grey matter region. The exploited mesh is reported in Figure~\ref{fig:mesh_brain} (left), where we show the subdivision of grey and white matter. In Figure~\ref{fig:mesh_brain}, we also report the unstable grey matter portion of the section from which the seizure starts. As an initial condition for the transmembrane potential, by defining $\Omega_0$ as the subsection of unstable grey matter, we adopt the following analytical function:
\begin{align*}
u(0) =  -67 + 17e^{-2(x - x_0)^2 - 2(y-y_0)^2} \chi_{\Omega_0}(x,y),
\end{align*}
where $\chi$ is the indicator function. The considered brain section has a maximum longitudinal length of $7.5\,\mathrm{cm}$ and a vertical length of $7\,\mathrm{cm}$. The mesh is characterized by $8476$ elements, of which $3927$ identify white matter and the remaining grey matter. Figure~\ref{fig:BC_central} shows the evolution of the transmembrane potential originated in the unstable grey matter region. 
Due to the strong anisotropy of the white matter \cite{fields2010neuroscience}, there is a scattering of the anomalous signal consistent with what was previously analyzed on the square mesh. The white matter regions within the section slow down the signal, favoring the horizontal to vertical direction. Also, in this test case, fifth-degree polynomials are exploited for the discretization, and $\Delta t = 2.5e^{-3}$ for the time discretization.

\begin{figure}[h!]
\begin{subfigure}[b]{0.3\textwidth}
\centering    \includegraphics[scale=0.17]{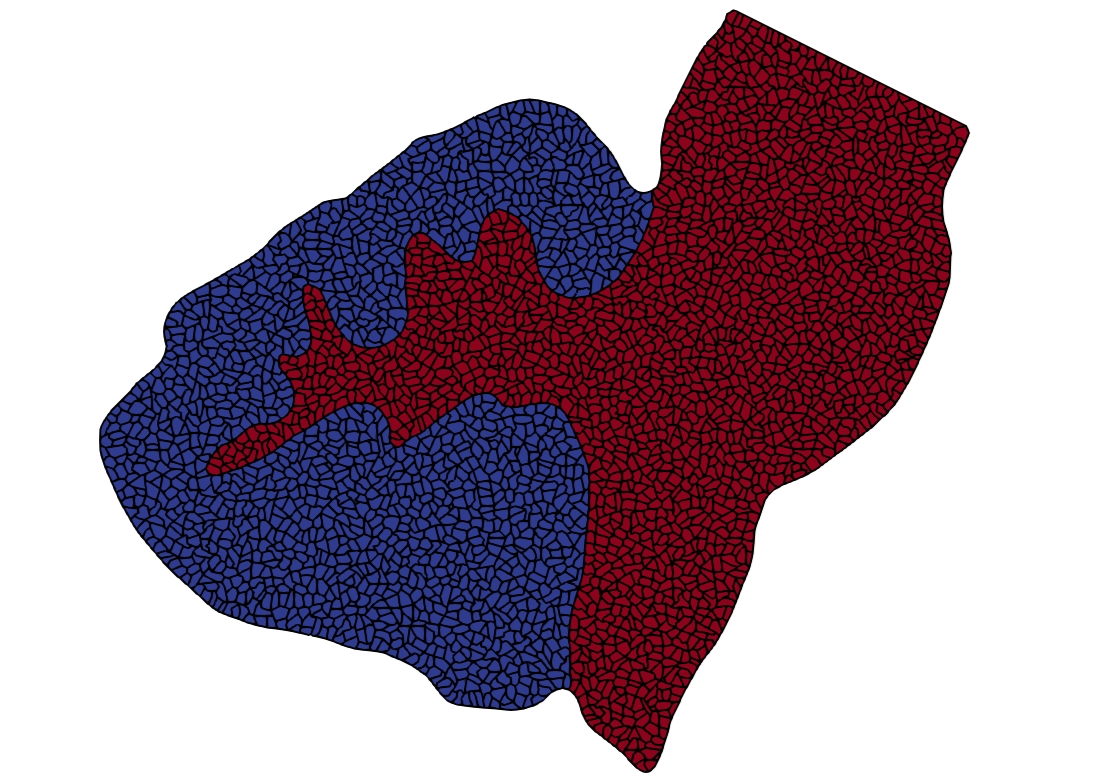}  \caption{Polytopal grid} \label{fig:comput_domain_brain}
\end{subfigure}\hfill
\begin{subfigure}[b]{0.3\textwidth}
\centering
  \includegraphics[scale=0.17]{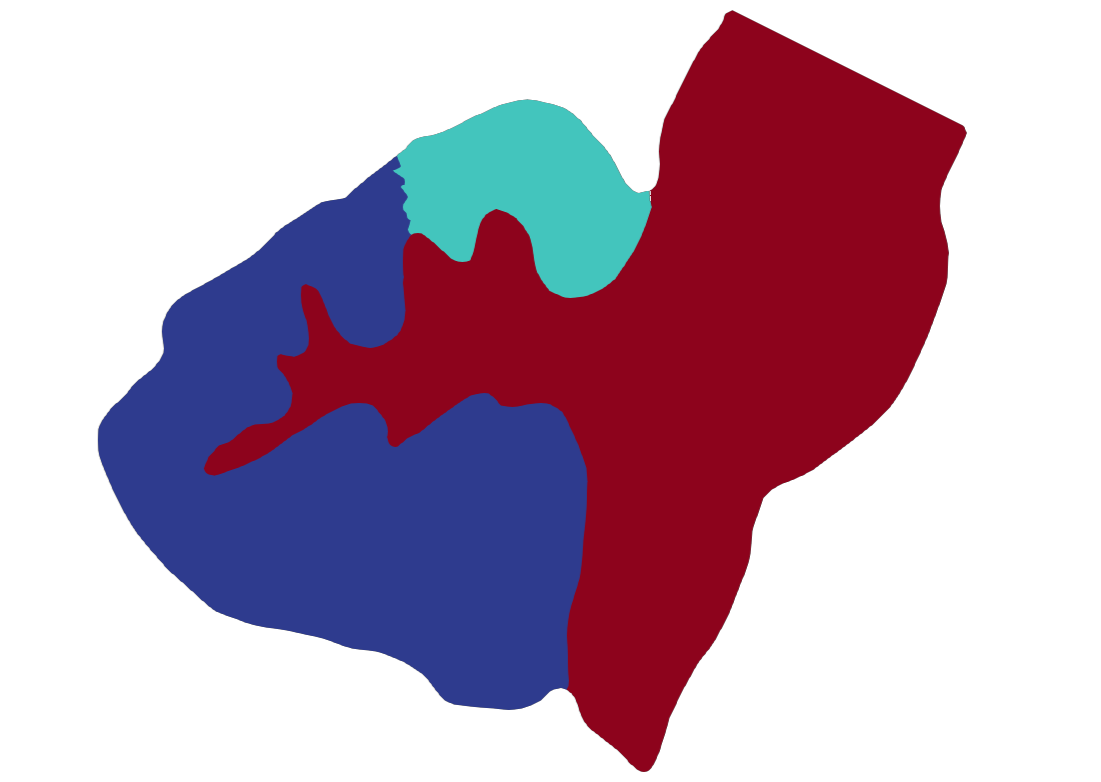} \caption{Tissue differentiation} \label{fig:comput_domain_IC_poly_brain}
\end{subfigure}\hfill
\begin{subfigure}[b]{0.3\textwidth}
\centering
    \includegraphics[scale=0.11]{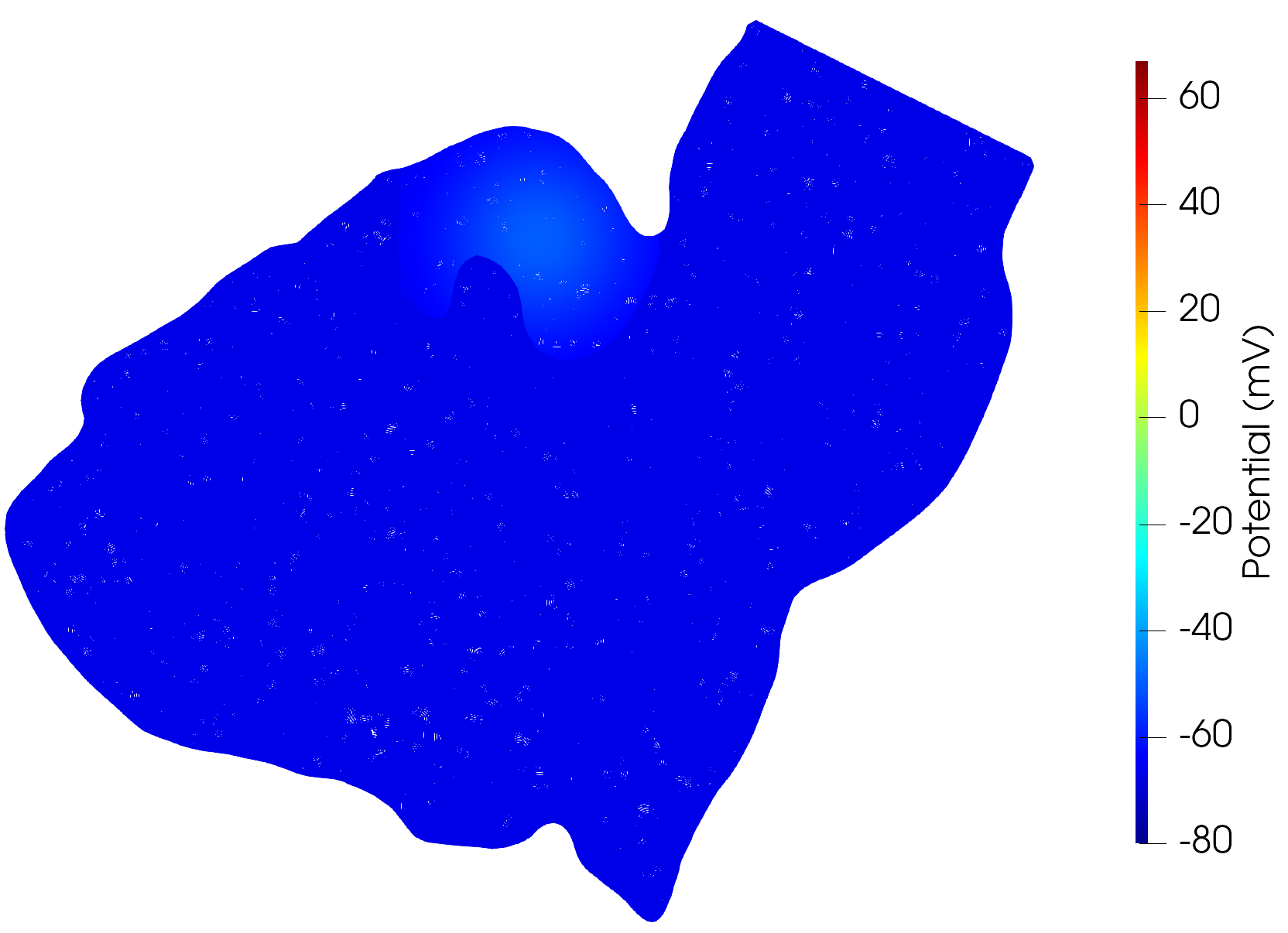} \caption{Initial condition for $u$} \label{fig:comput_domain_IC_brain}
\end{subfigure}
    \caption{Test case 2: (\ref{fig:comput_domain_brain}) computational domain and corresponding grid, (\ref{fig:comput_domain_IC_poly_brain})  initial unstable grey matter region (light blue), stable grey matter region (blue), vertical anisotropic white matter region (red), (\ref{fig:comput_domain_IC_brain}) initial condition of the transmembrane potential.}
    \label{fig:mesh_brain}
\end{figure}
\vspace{-.7cm}
\begin{figure}[h!]
    \centering
    \begin{subfigure}[b]{\textwidth}
    \centering
    \includegraphics[scale=0.28]{photos/scale.png}
    \end{subfigure}\hfill
     \vspace{-.2cm}
    \begin{subfigure}[b]{0.33\textwidth}
    \centering
        \includegraphics[scale=0.14]{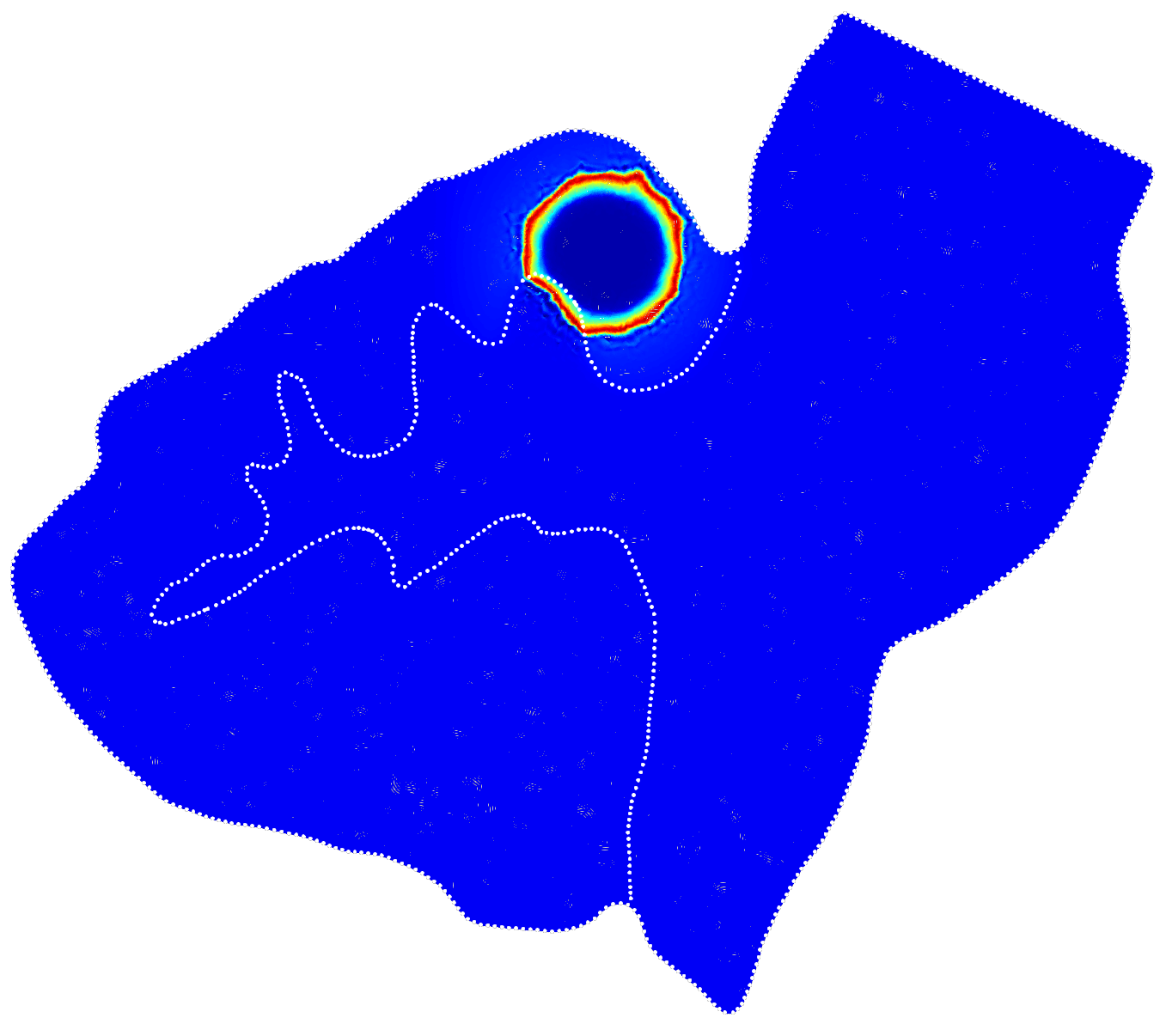} \label{fig:1}
        \caption{$t=4 \;\mathrm{ms}$}
    \end{subfigure}\hfill
         \begin{subfigure}[b]{0.33\textwidth}
      \centering
         \includegraphics[scale=0.14]{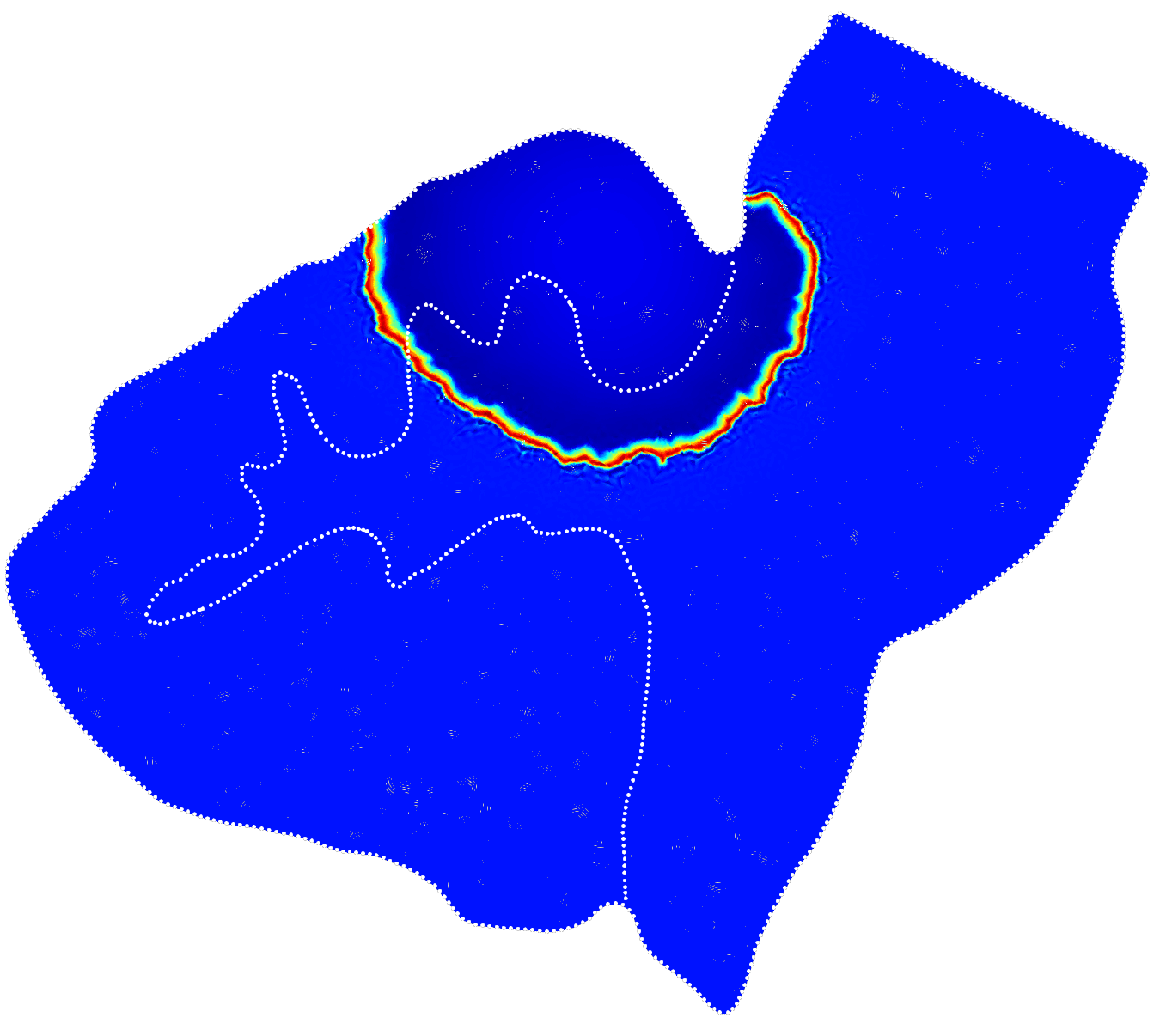}
         \label{fig:2}
        \caption{$t=10\;\mathrm{ms}$}
    \end{subfigure}\hfill
             \begin{subfigure}[b]{0.33\textwidth}
      \centering
         \includegraphics[scale=0.14]{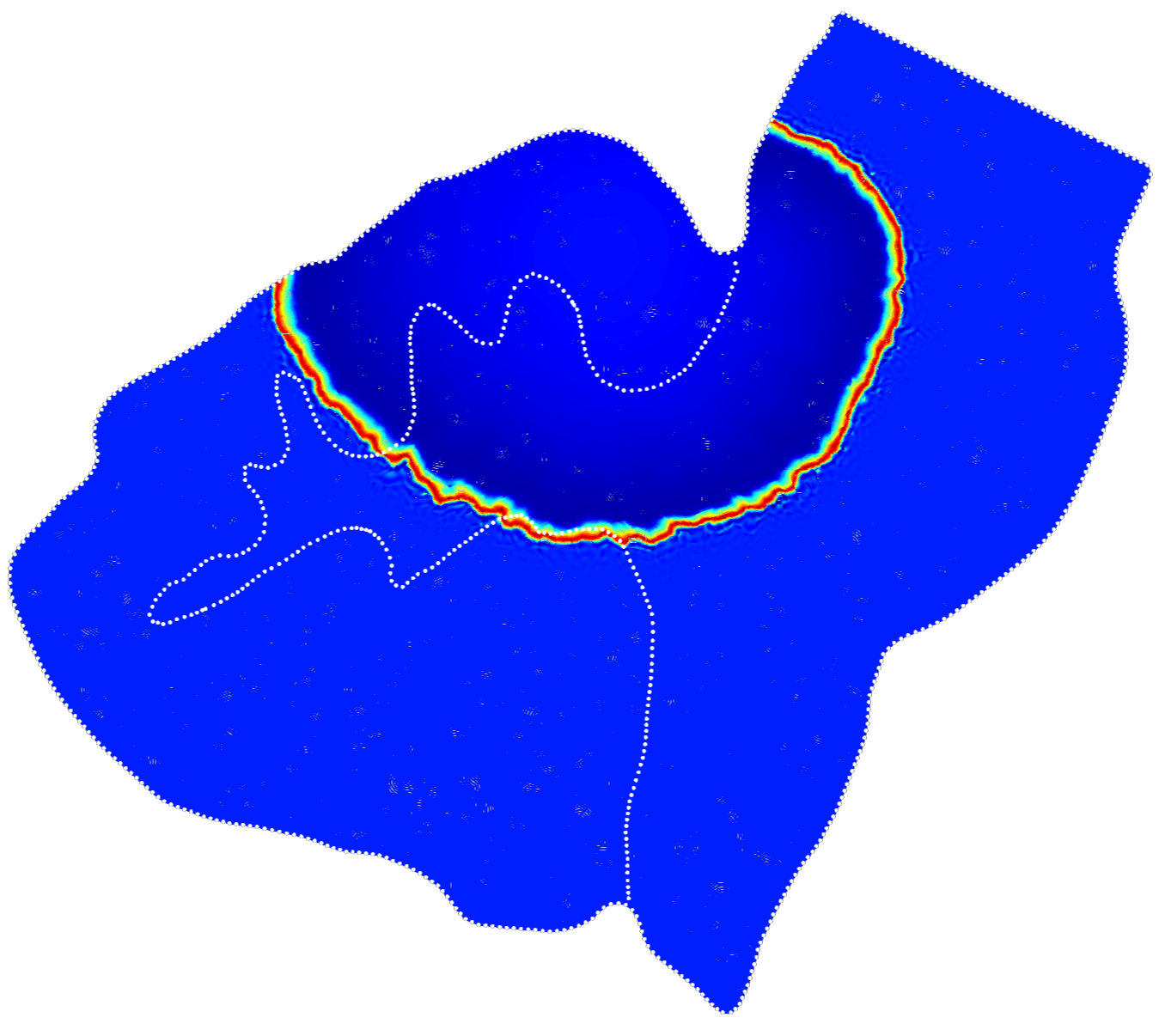}\label{fig:3}
        \caption{$t=16\;\mathrm{ms}$}
    \end{subfigure}\hfill
     \begin{subfigure}[b]{0.33\textwidth}
      \centering
         \includegraphics[scale=0.14]{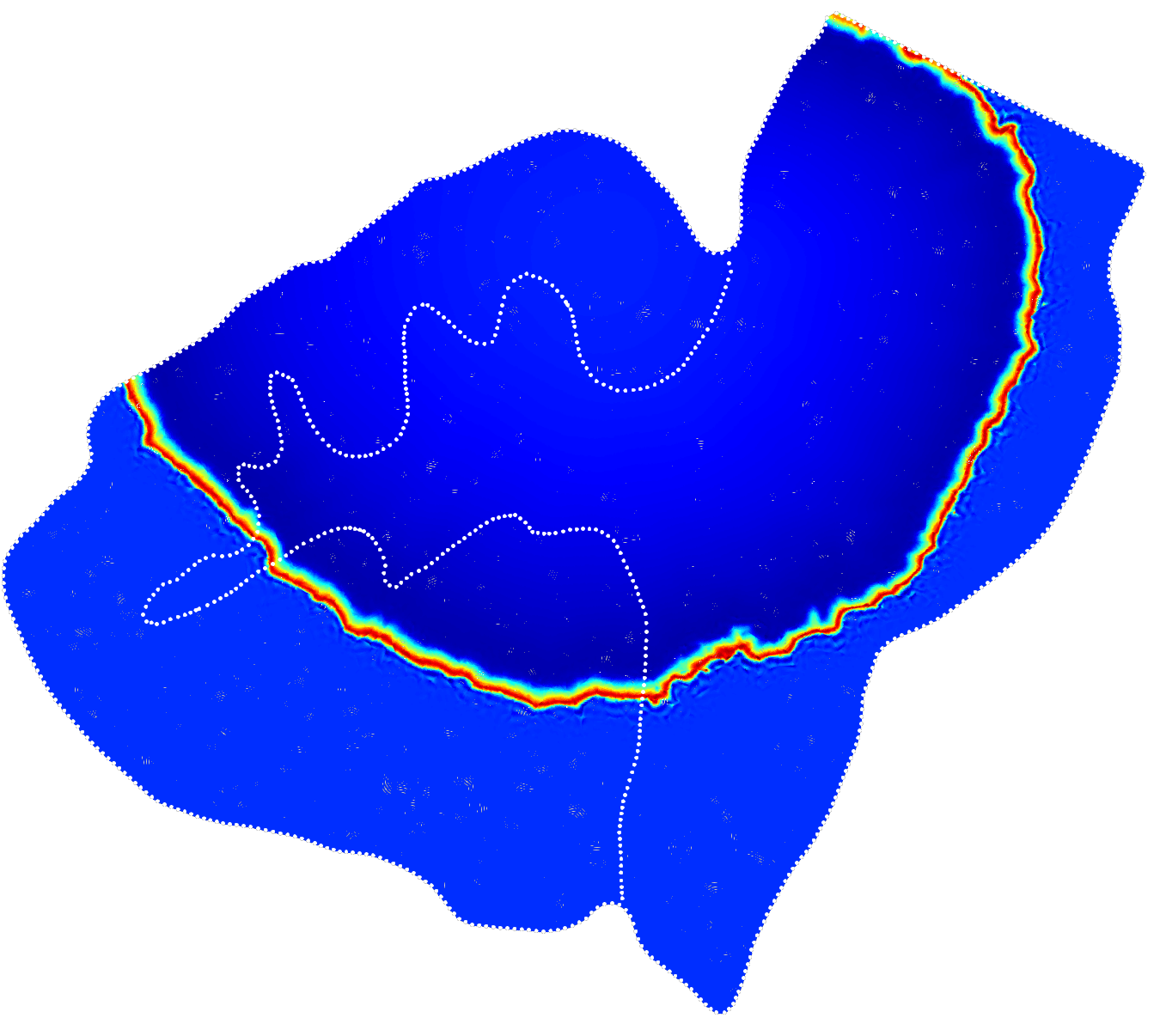}\label{fig:4}
        \caption{$t=24\;\mathrm{ms}$}
    \end{subfigure}\hfill
     \begin{subfigure}[b]{0.33\textwidth}
      \centering
         \includegraphics[scale=0.14]{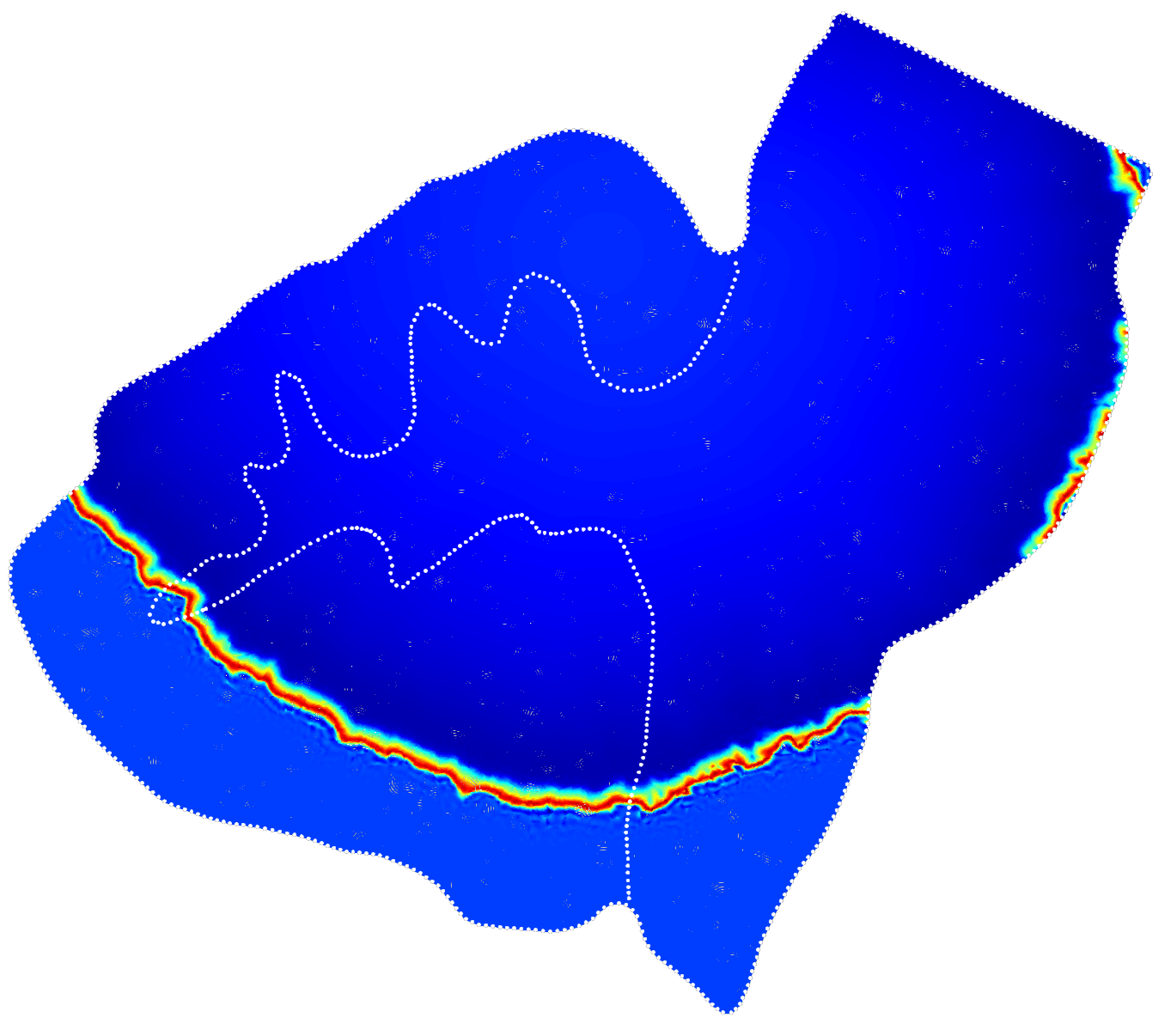}\label{fig:5}
        \caption{$t=34\;\mathrm{ms}$}
    \end{subfigure}\hfill
    \begin{subfigure}[b]{0.33\textwidth}
      \centering
         \includegraphics[scale=0.14]{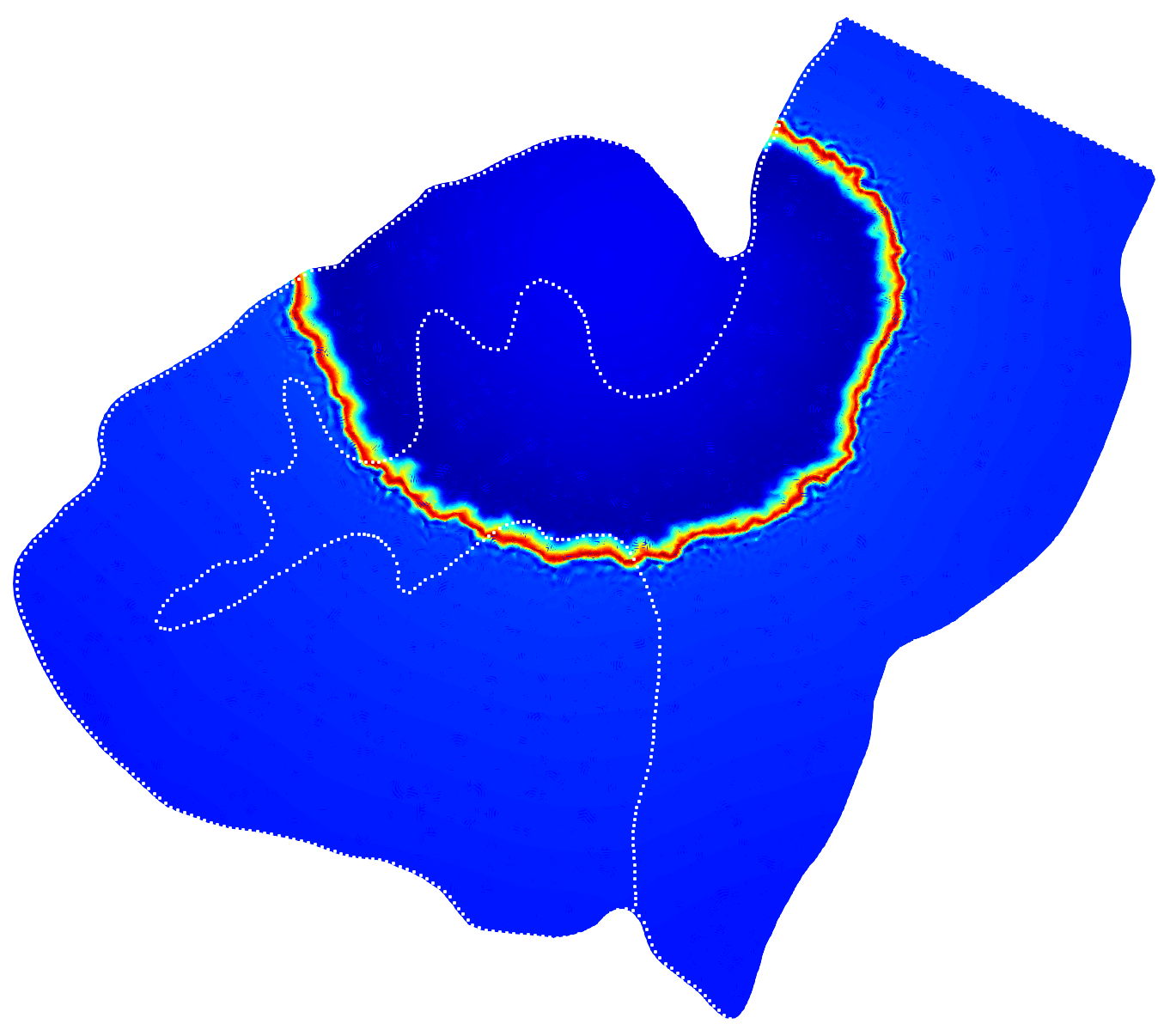}\label{fig:6}
        \caption{$t=70\;\mathrm{ms}$}
    \end{subfigure}
    \caption{Test case 2: snapshots of the transmembrane potential evolution in the sagittal section of the brain.}
    \label{fig:BC_central}
\end{figure}

\section{Theoretical analysis}
\label{sec:6}
\noindent In this section, we present the theoretical analysis of the stability and an \textit{a-priori} convergence
analysis for a benchmark test case. Specifically, we consider the monodomain equation with homogeneous parameters, no forcing currents, and a ionic term that depends on the transmembrane potential in the following non-linear way:
\begin{equation}
    \begin{aligned}
      f(u) = a(u - V_{\text{rest}})(u - V_{\text{thres}})(u - V_{\text{depol}}),
\end{aligned}
\label{eq::F_analitical}
\end{equation}
where $V_\text{rest} \le  V_\text{thres} \le V_\text{depol}$ and $a>0$. Here, $V_\text{rest}$ is the resting potential, $V_\text{thres}$ represents the threshold potential, and $V_\text{depol}$ represents the depolarization potential. Starting from \cite{pezzuto2016space}, where the convergence for the one-dimensional problem is analyzed, the solution is extended to the two-dimensional case by introducing $u_\mathrm{ex}$ as the exact solution of the problem described in the Equation \eqref{eq:one-dimensional} with $f$ defined as \eqref{eq::F_analitical}. The analytic solution is therefore defined as follows.
\begin{equation}
    \begin{aligned}
      u_\mathrm{ex}(\boldsymbol{x},t) = \frac{V_{\text{dep}}-V_{\text{rest}}}{2}\left[1- \tanh\left(\frac{\boldsymbol{x}-\boldsymbol{c}t}{\epsilon}\right)\right] + V_{\text{rest}},
\end{aligned}
\label{eq:one-dimensional}
\end{equation}
where $\epsilon$ characterizes the thickness of the wavefront, while $\boldsymbol{c}$ is the propagation speed of the wave.
\begin{assumption} (Regularity of the coefficients): Concerning the physical parameters of the specific model, we assume the following regularity requirements: $V_{\mathrm{rest}}, V_{\mathrm{depol}}, V_{\mathrm{thres}} \in L^{\infty}(\Omega)$, $a \in L^{\infty}_+(\Omega)$.
\label{as:reg_analytical}
\end{assumption}
\noindent By setting 
\begin{equation} 
    \begin{aligned}
         r_{\text{ion}}(u(t),v) = ( a u(t)^3,v)_{\Omega} &- (a ( V_{\text{thres}} +  V_{\text{depol}} + V_{\text{rest}})   )u(t)^2,v)_{\Omega} + (aV_{\text{thres}}V_{\text{depol}}V_{\text{rest}},v)_{\Omega} - \\& - ( a ( V_{\text{thres}}V_{\text{depol}} +  V_{\text{depol}}V_{\text{rest}} + V_{\text{rest}}V_{\text{thres}})) u(t),v)_{\Omega}  , \qquad \forall v \in V.
    \end{aligned}
    \label{eq:non_linear}
\end{equation}
\noindent Using the definitions introduced in Section \ref{sec:3} and exploiting Assumption \ref{as:mesh}, it is possible to rewrite the semi-discrete formulation for the problem \eqref{eq:monodomain} exploiting \eqref{eq::F_analitical} as follows:
\vspace{2mm}\\
\noindent $\forall t \in (0,T]$ find $u_h(t) \in V_h^{\text{DG}} \text{ such that }:$ 
\begin{equation}
 \left\{ 
    \begin{aligned}
        &\chi_m C_m  \left(\frac{\partial u_h(t)}{\partial t},v_h\right)_{\Omega} + \mathcal{A}(u_h(t),v_h) +  \chi_m r_{\text{ion}}(u_h(t),v_h) = 0  &\forall & v_h \in V_h^{\text{DG}}, \\
        &u_h(0) = u^0, &\text{in }& \Omega.
    \end{aligned}
    \right.
    \label{eq::weak_analitical}
\end{equation}
where $r_{\mathrm{ion}}$ is defined as in \eqref{eq:non_linear}. By exploiting all the definitions in Section \ref{sec:4}, we can introduce the following formulation for the ionic term in the fully discrete formulation, where $[\mathbf{I}]^n_j = (\mathrm{I_{ion}}^n,\varphi_j)_{\Omega}, \quad  j = 1,...,N_h$:
\begin{equation*}
    \begin{aligned}
    \mathrm{I}^n_\mathrm{ion} &= (a (u^3)^n,v)_{\Omega} - (a ( V_{\text{thres}} +  V_{\text{depol}} + V_{\text{rest}})   )(u^2)^n,v)_{\Omega} +(aV_{\text{thres}}V_{\text{depol}}V_{\text{rest}},v)_{\Omega} - \\& - ( a ( V_{\text{thres}}V_{\text{depol}} +  V_{\text{depol}}V_{\text{rest}} + V_{\text{rest}}V_{\text{thres}}))u^n,v)_{\Omega}.
    \end{aligned}
\end{equation*}

\subsection{Stability analysis of the semi-discrete formulation}
\label{sec:7}
\noindent In this section, we present the stability analysis on the semi-discrete formulation of the problem introduced in Equation \eqref{eq::weak_analitical}. We set: 
\begin{equation}
    \begin{aligned}
       &\| {v} \|_\mathrm{DG}^2 =  \|\nabla_h {v}\|^2 +\|\gamma^{\frac{1}{2}}\jumpl{v}\jumpr\|^2_{\mathcal{F}_h^\mathrm{I}} \qquad \forall v\in H^1(K), \; \forall K \in \partition,
    \end{aligned}
    \label{eq::DG_norm}
\end{equation}
The $L^2$-norm on a set of faces $\faces$ will be indicated as $\|\cdot \|_{\faces} = (\sum_{F\in\faces}\| \cdot \|_{L^2(F)}^2)^{1/2}$.
We also introduce the following definition:
\begin{equation}
\label{eq:normDGfull}
\begin{aligned}
        \tnorm v \tnorm_\text{DG} = \|v\|_\text{DG} + \|\eta^{-\frac{1}{2}}\averagel \boldsymbol{\Sigma} \nabla_h v \averager\|_{L^2(\mathcal{F}_h^I)} \quad \forall v \in H^2(\mathcal{T}_h),
\end{aligned}
\end{equation}
where $\eta$ is defined in Equation \eqref{eq:eta} and $H^s((\mathcal{T}_h)$ is the space of piecewise $H^s$ functions, $s\ge0$. We remark that it exists $\hat{C}>0$ such that $\hat{C}\tnorm v \tnorm^2_\text{DG} \le \|v\|^2_\text{DG}$ for all $ v \in  V_h^{\text{DG}}$. 
\begin{proposition}
The bilinear form $\mathcal{A}(\cdot,\cdot)$ is continuous and coercive: 
\begin{align}
         \label{eq::coercivity_1} \exists M \ge 0:& \quad |\mathcal{A}(v_h,w_h)| \le M \tnorm v_h\tnorm _{\mathrm{DG}} \|w_h\|_{\mathrm{DG}} \quad &\forall v_h \in H^2(\partition),\; \forall w_h \in W_{h}^{\mathrm{DG}}\\
         \label{eq::coercivity_2}   \exists \mu \ge 0:& \quad \mathcal{A}(v_h,v_h) \ge \mu \| v_h \|^2_{\mathrm{DG}} \quad &\forall v_h \in W_{h}^{\mathrm{DG}}
   \end{align}
where $M$ and $\mu$ are independent of $h$. Coercivity holds provided that the penalty coefficient appearing in \ref{eq:eta} is chosen large enough. 
\end{proposition}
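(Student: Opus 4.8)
The plan is to establish the two bounds separately, reducing every face integral to the weighted jump and weighted average quantities that make up $\|\cdot\|_{\mathrm{DG}}$ and $\tnorm\cdot\tnorm_{\mathrm{DG}}$ via the Cauchy--Schwarz inequality, and then invoking a discrete trace (inverse) inequality on polytopal elements to control the average of the elementwise gradient of a discrete function in terms of its broken $H^1$ seminorm.

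For the continuity estimate \eqref{eq::coercivity_1} I would split $\mathcal{A}(v_h,w_h)$ into its volume part and the three face terms and bound each of them. The volume term $\int_\Omega\boldsymbol{\Sigma}\nabla_h v_h\cdot\nabla_h w_h\,dx$ is handled by Cauchy--Schwarz together with the boundedness of $\boldsymbol{\Sigma}$, producing $\|\nabla_h v_h\|\,\|\nabla_h w_h\|$. In the penalty term I would write $\eta=\eta^{1/2}\eta^{1/2}$ and apply Cauchy--Schwarz over $\facesinternal$, obtaining $\|\eta^{1/2}\jumpl v_h\jumpr\|_{\facesinternal}\,\|\eta^{1/2}\jumpl w_h\jumpr\|_{\facesinternal}$, each factor being part of the corresponding norm. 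For the two consistency terms I would insert the weights $\eta^{\pm1/2}$: for $\sum_F\int_F\averagel\boldsymbol{\Sigma}\nabla_h v_h\averager\cdot\jumpl w_h\jumpr$ this yields $\|\eta^{-1/2}\averagel\boldsymbol{\Sigma}\nabla_h v_h\averager\|_{\facesinternal}\,\|\eta^{1/2}\jumpl w_h\jumpr\|_{\facesinternal}$, the first factor being precisely the extra piece of $\tnorm v_h\tnorm_{\mathrm{DG}}$; for the symmetric term $\sum_F\int_F\jumpl v_h\jumpr\cdot\averagel\boldsymbol{\Sigma}\nabla_h w_h\averager$ I would additionally use the discrete trace inequality to bound $\|\eta^{-1/2}\averagel\boldsymbol{\Sigma}\nabla_h w_h\averager\|_{\facesinternal}$ by $C\|\nabla_h w_h\|\le C\|w_h\|_{\mathrm{DG}}$, which is legitimate because $w_h\in V_h^{\mathrm{DG}}$ is a polynomial on each element. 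Summing the four contributions gives \eqref{eq::coercivity_1} with $M$ independent of $h$.

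For coercivity \eqref{eq::coercivity_2} I would take $w_h=v_h$, so that $\mathcal{A}(v_h,v_h)=\|\sqrt{\boldsymbol{\Sigma}}\nabla_h v_h\|^2+\|\eta^{1/2}\jumpl v_h\jumpr\|_{\facesinternal}^2-2\sum_{F\in\facesinternal}\int_F\averagel\boldsymbol{\Sigma}\nabla_h v_h\averager\cdot\jumpl v_h\jumpr$. I would then estimate the cross term by Young's inequality with a free parameter $\delta>0$, namely $2\sum_F\int_F(\cdot)\le\delta\|\eta^{-1/2}\averagel\boldsymbol{\Sigma}\nabla_h v_h\averager\|_{\facesinternal}^2+\delta^{-1}\|\eta^{1/2}\jumpl v_h\jumpr\|_{\facesinternal}^2$, and use the discrete trace inequality combined with the explicit form $\eta=\eta_0\{\Sigma_K\}_A p^2/\{h\}_H$ of the penalty in \eqref{eq:eta} to obtain $\|\eta^{-1/2}\averagel\boldsymbol{\Sigma}\nabla_h v_h\averager\|_{\facesinternal}^2\le(C_{\mathrm{tr}}/\eta_0)\|\sqrt{\boldsymbol{\Sigma}}\nabla_h v_h\|^2$, with $C_{\mathrm{tr}}$ depending only on the mesh-regularity constants of Assumption \ref{as:mesh}. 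Choosing $\delta=2$ and then $\eta_0$ large enough (e.g.\ $\eta_0\ge 4C_{\mathrm{tr}}$) absorbs the gradient contribution into the first term, leaving $\mathcal{A}(v_h,v_h)\ge\tfrac12\|\sqrt{\boldsymbol{\Sigma}}\nabla_h v_h\|^2+\tfrac12\|\eta^{1/2}\jumpl v_h\jumpr\|_{\facesinternal}^2\ge\mu\|v_h\|_{\mathrm{DG}}^2$, where the last inequality uses the uniform ellipticity of $\boldsymbol{\Sigma}$ from below and the comparability of the jump weight in $\eta$ with the one entering $\|\cdot\|_{\mathrm{DG}}$. This is exactly where the hypothesis that $\eta_0$ be chosen large enough is used, and it fixes the (mesh-independent) threshold.

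The only genuinely non-routine ingredient, and the step I expect to require the most care, is the polytopal discrete trace inequality $\|\nabla_h w_h\|_{L^2(\partial K)}^2\lesssim (p^2/h_K)\|\nabla_h w_h\|_{L^2(K)}^2$ with a constant independent of $h$, of the element $K$, and of the polynomial degree $p$: on general polygons and polyhedra this is not the classical estimate on simplices and must be derived through the shape-regular conforming simplicial submesh $\hat{\mathcal{T}}_h$ postulated in Assumption \ref{as:mesh}, together with the contact-regularity bound $h_K^{d-1}\lesssim|F|$ and the bounded overlap of the sub-elements; once this is available, the $p^2/\{h\}_H$ scaling built into $\eta$ in \eqref{eq:eta} exactly compensates it. Everything else is standard bookkeeping with Cauchy--Schwarz, Young's inequality, and the two-sided bounds on $\boldsymbol{\Sigma}$.
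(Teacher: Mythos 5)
Your proposal is correct and is precisely the standard interior-penalty argument that the paper itself does not spell out: the proposition is stated without proof, implicitly deferring to the classical PolyDG literature (e.g.\ the framework of Cangiani--Georgoulis--Houston built on Assumption \ref{as:mesh}). You handle the two delicate points properly --- the asymmetry in \eqref{eq::coercivity_1}, where the inverse trace inequality may only be applied to the discrete argument $w_h$ while the average-flux term of $v_h\in H^2(\partition)$ must be absorbed into $\tnorm v_h\tnorm_{\mathrm{DG}}$, and the derivation of the mesh-independent threshold on $\eta_0$ through the polytopal trace inequality whose $p^2/h_K$ scaling is exactly compensated by the penalty \eqref{eq:eta} --- so nothing essential is missing.
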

\noindent Finally, we introduce the definition of the energy norm operator $\|\cdot\|_{\epsilon}: H^1(\mathcal{T}_h) \rightarrow \mathbb{R}$ as follows:
\begin{equation}
\begin{aligned}
  \|v\|^2_{\epsilon} = \|v\|^2 + \int_{0}^t \frac{2\mu}{C_m \chi_m}\|v\|^2_{\mathrm{DG}}\mathrm{d}s + \int_0^t \frac{a}{C_m}\|v\|^4_{L^4}\mathrm{d}s \quad \forall v\in H^1(\partition),
\end{aligned}
\label{eq::energy_norm}
\end{equation}
where $\chi_m,C_m,a >0$.
\begin{theorem} \label{theorem:theorem1}
Let Assumption \ref{as:mesh} and \ref{as:reg_analytical} be satisfied and let $u_h(t)$ be the solution of the Equation \eqref{eq::weak_analitical} for any $t \in (0, T]$. Let the stability parameter $\eta$ be large enough. Then the following stability estimate holds: 
\begin{equation}
\begin{aligned}
  \|u_h(t)\|^2_{\epsilon} \lesssim \|u_h(0)\|^2 + a\dfrac{\|\omega\|_{L^{\infty}(\Omega)}^2+\|\theta\|_{L^{\infty}(\Omega)}^2+\|\phi\|_{L^{\infty}(\Omega)}^2}{ C_m}|\Omega| t,
\end{aligned}
\label{eq::theorem1}
\end{equation}
where $\omega, \; \theta, \; \phi$ are defined as $\phi =  ( V_{\text{thres}} +  V_{\text{depol}} + V_{\text{rest}}), \; \theta =  ( V_{\text{thres}}V_{\text{depol}} +  V_{\text{depol}}V_{\text{rest}} + V_{\text{rest}}V_{\text{thres}}), \; \omega = V_{\text{thres}}V_{\text{depol}}V_{\text{rest}}$. \;$C_m$ is the membrane capacitance defined as in Table \ref{table:G2} and $a$ is the parameter introduced in \eqref{eq::F_analitical}.
The hidden constant depends on $t$, on $\Omega$, on the coefficients, but it is independent of the discretization parameters.
\end{theorem}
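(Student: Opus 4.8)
The estimate follows from a standard energy argument applied to the semi-discrete problem \eqref{eq::weak_analitical}; the only genuine difficulty is the non-monotone bistable cubic reaction term \eqref{eq::F_analitical}, and this is precisely why the energy norm \eqref{eq::energy_norm} is equipped with a quartic $L^4$-in-time component. First I would test \eqref{eq::weak_analitical} with $v_h = u_h(t) \in V_h^{\text{DG}}$, which is legitimate because $V_h^{\text{DG}}$ is finite dimensional, so all $L^p$-norms of $u_h$ are finite and the cubic term is well defined (the same remark, together with local Lipschitz continuity of the polynomial right-hand side, guarantees that $u_h$ exists on all of $(0,T]$). The time-derivative term gives $\chi_m C_m(\partial_t u_h, u_h)_\Omega = \tfrac{\chi_m C_m}{2}\tfrac{d}{dt}\|u_h(t)\|^2$, and by the continuity and coercivity Proposition stated above — valid since the penalty parameter $\eta_0$ in \eqref{eq:eta} is taken large enough — one has $\mathcal{A}(u_h,u_h) \ge \mu\|u_h\|_{\mathrm{DG}}^2$ with $\mu>0$. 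This yields
\[ \tfrac{\chi_m C_m}{2}\frac{d}{dt}\|u_h(t)\|^2 + \mu\|u_h(t)\|_{\mathrm{DG}}^2 + \chi_m\, r_{\mathrm{ion}}(u_h(t),u_h(t)) \le 0. \]

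Next I would bound the ionic term from below. Writing $f(u)u$ out monomial by monomial as in \eqref{eq:non_linear}, the leading term $a u^4$ is the only sign-definite piece; the cubic, quadratic and linear monomials in $u$ (with coefficients built from $a$ and $\phi,\theta,\omega$) are each controlled pointwise by Young's inequality, absorbing in every case a small fraction of $a u^4$, which gives the algebraic bound $f(u)u \ge \tfrac{a}{2}u^4 - \tilde C$ with $\tilde C$ depending only on $a$ and on $\phi,\theta,\omega$. Integrating over $\Omega$ and using $\phi,\theta,\omega \in L^\infty(\Omega)$ (Assumption \ref{as:reg_analytical}) one obtains
\[ r_{\mathrm{ion}}(u_h,u_h) = \int_\Omega f(u_h)u_h \, dx \ge \tfrac{a}{2}\|u_h\|_{L^4}^4 - C\,a\big(\|\phi\|_{L^{\infty}(\Omega)}^2 + \|\theta\|_{L^{\infty}(\Omega)}^2 + \|\omega\|_{L^{\infty}(\Omega)}^2\big)|\Omega|, \]
where $C$ is independent of the discretization (the precise powers produced by Young's inequality are absorbed into it and into the hidden constant of the statement). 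Substituting into the previous inequality, dividing by $\chi_m C_m$ and multiplying by $2$, the coefficients in front of $\|u_h\|_{\mathrm{DG}}^2$ and of $\|u_h\|_{L^4}^4$ coincide exactly with those appearing in the definition \eqref{eq::energy_norm} — the factor $\tfrac12$ peeled off by Young is designed so that this matches — and integrating the resulting differential inequality in time over $(0,t)$ turns the left-hand side into $\|u_h(t)\|_{\epsilon}^2$ and the right-hand side into $\|u_h(0)\|^2$ plus $C a(\|\phi\|_{L^\infty}^2+\|\theta\|_{L^\infty}^2+\|\omega\|_{L^\infty}^2)|\Omega|\,t/C_m$, which is \eqref{eq::theorem1}. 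If one instead uses a Young split that leaves a residual term proportional to $\|u_h\|^2$ on the right, a Grönwall argument closes the estimate, its exponential being absorbed into the $t$-dependent hidden constant.

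The main obstacle is the reaction term: unlike a monotone nonlinearity, the bistable cubic \eqref{eq::F_analitical} has no fixed sign, so after testing it cannot simply be discarded. One must exploit the positivity of its leading coefficient together with the $L^4$ control — available only because it is built into $\|\cdot\|_{\epsilon}$ — and check that the fractions of $\|u_h\|_{L^4}^4$ consumed by the lower-order monomials sum to strictly less than the available quartic coefficient, so that a genuine positive remainder $\tfrac{a}{2}\|u_h\|_{L^4}^4$ survives. The remaining ingredients — coercivity of $\mathcal{A}$ (hence $\mu>0$) under the large-penalty condition and the $L^\infty$ regularity of the coefficients — are exactly Assumptions \ref{as:mesh} and \ref{as:reg_analytical} and the Proposition, and everything else is routine bookkeeping.
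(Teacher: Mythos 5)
Your proposal is correct and follows essentially the same energy argument as the paper: test with $v_h=u_h(t)$, invoke coercivity of $\mathcal{A}$ under the large-penalty assumption, absorb the sub-quartic monomials of the cubic reaction term into a fraction of $a\|u_h\|_{L^4(\Omega)}^4$ via Young's inequality so that the surviving quartic and DG contributions match the coefficients in the energy norm, and integrate in time. The only cosmetic difference is that you apply Young pointwise (which produces powers such as $\|\phi\|_{L^\infty(\Omega)}^4$ and $\|\omega\|_{L^\infty(\Omega)}^{4/3}$ in the residual constant), whereas the paper applies H\"older in $L^p(\Omega)$ and then Young with weights $k_i$ chosen proportional to the reciprocals of the $L^\infty$ norms, which yields the homogeneous squared norms displayed in the statement; since the hidden constant is allowed to depend on the coefficients, this does not affect the validity of the estimate.
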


\begin{proof}
Starting from Equation \eqref{eq::weak_analitical} and taking the test function $v_h = u_h(t)$, we obtain: 
\begin{equation}
\label{eq:testedvm}
\begin{aligned}
         \chi_m C_m (\dot{u}_h(t),u_h(t))_{\Omega} + \mathcal{A}(u_h(t),u_h(t)) +   \chi_m r_{ion}(u_h(t),u_h(t))  = F(u_h(t)),
   \end{aligned}
   \end{equation}
where we exploit the notation $\dot{u}_h = \frac{\partial u_h}{\partial t}$. We remind that the nonlinear term is defined as in expression \eqref{eq:non_linear}. To simplify the notation, we introduce the following constants:
\begin{equation}
\begin{aligned}
         \phi &=  ( V_{\text{thres}} +  V_{\text{depol}} + V_{\text{rest}}), \\ 
         \theta &=  ( V_{\text{thres}}V_{\text{depol}} +  V_{\text{depol}}V_{\text{rest}} + V_{\text{rest}}V_{\text{thres}}),\\
         \omega &= V_{\text{thres}}V_{\text{depol}}V_{\text{rest}},
         \label{eq::ato}
\end{aligned}
\end{equation}
where $V_\mathrm{depol},\;V_\mathrm{rest},$ and $V_\mathrm{thres}$ are introduce in \eqref{eq::F_analitical}. The reaction term then reads as follows:
\begin{equation*}
    r_{ion}(u_h(t),u_h(t)) = a\left((u_h(t))^3 - \phi (u_h(t))^2 + \theta u_h(t) - \omega\right).
\end{equation*}
Exploiting the coercivity estimate in Equation \eqref{eq::coercivity_2}, integrating in time the Equation \eqref{eq:testedvm}, H\"older inequality and Assumption \ref{as:reg_analytical}, we obtain:
\begin{equation*}
\begin{aligned}
\label{eq::stabilita}
         \dfrac{C_m}{2 a} & \|u_h(t)\|^2   +  \int_{0}^t \frac{\mu}{\chi_m  a} \|u_h(s)\|^2_{\mathrm{DG}}\mathrm{d}s   + \int_{0}^t \|u_h(s)\|^4_{L^4}\mathrm{d}s +  \le   \dfrac{C_m}{2 a}  \|u_h(0)\|^2 \\ & + \int_{0}^t \underbrace{\|\omega\|_{L^{\infty}(\Omega)}|\Omega|^{3/4} \|u_h(s)\|_{L^4(\Omega)}  \mathrm{d}s}_{\text{(I)}} + \int_{0}^t \underbrace{\|\theta\|_{L^{\infty}(\Omega)}\|u_h(s)\|^2  \mathrm{d}s}_{\text{(II)}} +\int_{0}^t  \underbrace{\|\phi\|_{L^{\infty}(\Omega)} \|u_h(s)\|^3_{L^3(\Omega)}\mathrm{d}s}_{\text{(III)}}
\end{aligned}
\end{equation*}
We next bound each term separately:
\begin{itemize}
\item[(I)] We exploit Young's inequality with $p=4$ and $p^* = 4/3$: 
\begin{equation*}
\begin{aligned}
         |\Omega|^{3/4} \|u_h(s)\|  \le \left(\frac{k_1}{4}\|u_h\|^4_{L^4} + \frac{3}{4k_1}|\Omega|\right),
\end{aligned}
\end{equation*}
where $k_1>0$ can be arbitrarily chosen. 
\item[(II)] We exploit H\"older's inequality with $p=2$ and $p^* = 2$ and Young's inequality with $p=2$ and $p^* = 2$, to obtain: 
\begin{equation*}
\begin{aligned}
         \|u_h(t)\|^2 \le \|u\|^2_{L^4}|\Omega|^\frac{1}{2} \le \left(\frac{k_2}{2}\|u_h\|^4_{L^4} + \frac{1}{2k_2}|\Omega|\right),
\end{aligned}
\end{equation*}
where $k_2>0$ can be arbitrarily chosen. 
\item[(III)] We exploit H\"older's inequality with $p=4/3$ and $p^* = 4$ and Young's inequality with $p=4/3$ and $p^* = 4$ in order to control the cubic term, as:
\begin{equation*}
\begin{aligned}
          \|u_h(t)\|^3_{L^3} \le \|u_h\|^3_{L^4}|\Omega|^\frac{1}{4} \le  \frac{3k_3}{4}\|u_h\|^4_{L^4} + \frac{1}{4k_3}|\Omega|,
\end{aligned}
\end{equation*}
where $k_3>0$ can be arbitrarily chosen. 
\end{itemize}
\noindent Collecting the previous bounds we obtain:
\begin{equation*}
\begin{aligned}
         \dfrac{C_m}{2 a} \|u_h(t)\|^2 + & \int_{0}^t \frac{\mu}{\chi_m  a} \|u_h(s)\|^2_{\mathrm{DG}}\mathrm{d}s   + \int_{0}^t \|u_h(s)\|^4_{L^4}\mathrm{d}s +  \le   \dfrac{C_m}{2 a}  \|u_h(0)\|^2 \\ + & \int_{0}^t \left( \dfrac{\|\omega\|_{L^{\infty}(\Omega)} k_1 + 2\|\theta\|_{L^{\infty}(\Omega)} k_2 + 3 \|\phi\|_{L^{\infty}(\Omega)} k_3}{4}\right)\|u_h(s)\|_{L^4(\Omega)}^4  \mathrm{d}s \\ + & \int_{0}^t \left(\dfrac{3\|\omega\|_{L^{\infty}(\Omega)}}{4 k_1} + \dfrac{\|\theta\|_{L^{\infty}(\Omega)}}{2 k_2} + \dfrac{\|\phi\|_{L^{\infty}(\Omega)}}{4 k_3}\right)|\Omega| \mathrm{d}s.
\end{aligned}
\end{equation*}
We choose:
\begin{equation*}
\begin{aligned}
 k_1 = \frac{2}{3\|\omega\|_{L^{\infty}(\Omega)}}, \qquad k_2 = \frac{1}{3\|\theta\|_{L^{\infty}(\Omega)}},  \qquad k_3=\frac{2}{9\|\phi\|_{L^{\infty}(\Omega)}},
\end{aligned}
\end{equation*}
and obtain the following estimate:
\begin{equation*}
\small
\begin{aligned}
         \dfrac{C_m}{a} \|u_h(t)\|^2 +  \int_{0}^t \frac{2 \mu}{\chi_m  a} \|u_h(s)\|^2_{\mathrm{DG}}\mathrm{d}s   + \int_{0}^t \|u_h(s)\|^4_{L^4}\mathrm{d}s +  \le   \dfrac{C_m}{a}  \|u_h(0)\|^2 +  \dfrac{9\|\omega\|_{L^{\infty}(\Omega)}^2+12\|\theta\|_{L^{\infty}(\Omega)}^2+9\|\phi\|_{L^{\infty}(\Omega)}^2}{4}|\Omega| t.
\end{aligned}
\end{equation*}
Taking the energy norm defined in \eqref{eq::energy_norm} the thesis follows.
\begin{equation*}
\begin{aligned}
  \|u_h(t)\|^2_{\epsilon} \lesssim \|u_h(0)\|^2_{\epsilon} 
  + a\dfrac{\|\omega\|_{L^{\infty}(\Omega)}^2+\|\theta\|_{L^{\infty}(\Omega)}^2+\|\phi\|_{L^{\infty}(\Omega)}^2}{C_m}|\Omega| t.
\end{aligned}
\end{equation*}
\end{proof}   

\subsection{Error analysis of the semi-discrete formulation}
\label{sec:8}
\noindent In this section, we derive a priori error estimate for the solution of the PolyDG semi-discrete problem in  \eqref{eq::weak_analitical}. In order to prove convergence, we assume $I^{\mathrm{ext}}=0$. Interpolating this type of solution, we can have a function $u_I$, which is $L^{\infty}$ by construction, then \cite{babuska1994p}: 
\begin{equation*}
\begin{aligned}
        \exists M_I >0 \quad \|u_I(t)\|^2_{L^{\infty}} \le M_I \quad \forall t \in (0,T). 
\end{aligned}
\end{equation*}
\begin{proposition}
\label{as:estimate}
Let Assumption \ref{as:mesh} be fulfilled. If $d\ge2$, then the following estimates hold:
\begin{equation}
\begin{aligned}
        \forall v \in H^n(\mathcal{T}_h) \quad \exists u_I \in V_h^\text{DG} : \tnorm v - v_I \tnorm^2_\text{DG} \le \sum_{K \in \mathcal{T}_h} h^{2\mathrm{min}\{p+1,n\}-2}\|v\|_{H^{n}(K)}
\end{aligned}
\end{equation}
\end{proposition}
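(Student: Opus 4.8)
The plan is to construct the interpolant $v_I$ element by element and then bound separately the three contributions to $\tnorm v-v_I\tnorm_{\mathrm{DG}}$ dictated by the definitions \eqref{eq::DG_norm}--\eqref{eq:normDGfull}: the broken gradient $\|\nabla_h(v-v_I)\|$, the weighted jump $\|\gamma^{1/2}\jumpl v-v_I\jumpr\|_{\facesinternal}$ (with $\gamma$ the penalty weight of \eqref{eq:eta}), and the weighted flux $\|\eta^{-1/2}\averagel\boldsymbol{\Sigma}\nabla_h(v-v_I)\averager\|_{L^2(\facesinternal)}$. I will work under the tacit regularity $n\ge 2$ needed for $\tnorm\cdot\tnorm_{\mathrm{DG}}$ to be finite (cf.\ \eqref{eq:normDGfull}), and I will take $v$ regular enough across interelement boundaries that $\jumpl v\jumpr=0$ on $\facesinternal$ --- as is the case in the application where $v=u_{\mathrm{ex}}$ --- so that jumps of $v-v_I$ reduce to one-sided polynomial-approximation errors.

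First I would invoke the Submesh Condition in Assumption~\ref{as:mesh}: every $K\in\partition$ contains a shape-regular matching simplicial submesh, so classical $hp$ polynomial approximation theory (a Stein-type extension composed with the Babu\v{s}ka--Suri local estimates of \cite{babuska1994p}, in their polytopal version) provides, for each $v\in H^n(\partition)$, a piecewise polynomial $v_I\in V_h^{\mathrm{DG}}$ with $v_I|_K\in\mathbb{P}^p(K)$ and
\begin{equation*}
\|v-v_I\|_{H^m(K)}\lesssim h_K^{\,s-m}\,\|v\|_{H^n(K)},\qquad m=0,1,2,\quad s:=\min\{p+1,n\},
\end{equation*}
for all $K\in\partition$, with hidden constants depending only on the shape-regularity of the submesh (and at most polynomially on $p$). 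Summing the $m=1$ bound over $K$ and using $h_K\le h$ together with $s\ge 1$ immediately controls the broken-gradient term by $h^{\,2s-2}\sum_{K}\|v\|_{H^n(K)}^2$, which is the right-hand side of the statement.

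Next I would treat the face terms by combining these local bounds with the multiplicative trace inequality valid on polytopal elements: for every $K\in\partition$, every face $F\subseteq\partial K$ and every $w\in H^1(K)$,
\begin{equation*}
\|w\|_{L^2(F)}^2\lesssim h_K^{-1}\|w\|_{L^2(K)}^2+\|w\|_{L^2(K)}\,\|\nabla w\|_{L^2(K)},
\end{equation*}
which follows from the Shape- and Contact-Regularity properties (see \cite{pietro2020hybrid}). Applying it with $w=v-v_I$, and separately with $w=\boldsymbol{\Sigma}\nabla(v-v_I)$ (admissible since $v-v_I\in H^2(K)$ and $\boldsymbol{\Sigma}$ is elementwise constant), and inserting the local approximation estimates yields
\begin{equation*}
\|v-v_I\|_{L^2(F)}^2\lesssim h_K^{\,2s-1}\|v\|_{H^n(K)}^2,\qquad \|\boldsymbol{\Sigma}\nabla(v-v_I)\|_{L^2(F)}^2\lesssim h_K^{\,2s-3}\|v\|_{H^n(K)}^2 .
\end{equation*}
On an interior face $F=\partial K^+\cap\partial K^-$ one has from \eqref{eq:eta} that $\gamma\simeq p^2\{h\}_H^{-1}$ and $\eta^{-1}\simeq p^{-2}\{h\}_H$, with the harmonic average $\{h\}_H$ comparable --- up to the contact-regularity constants --- to both $h_{K^+}$ and $h_{K^-}$. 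Hence multiplying the first bound by $\gamma$ and the second by $\eta^{-1}$ produces, in both cases and up to a power of $p$, a term $\lesssim h_K^{\,2s-2}\|v\|_{H^n(K)}^2$; summing over $F\in\facesinternal$, collecting the contributions of the two neighbouring elements of each face and using $h_K\le h$ closes the weighted-jump and weighted-flux estimates. Adding the three contributions gives the claim.

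The main obstacle I anticipate is precisely this face-term analysis on genuinely polytopal meshes: one has to check that the harmonic average $\{h\}_H$ in the penalty \eqref{eq:eta} together with the Contact-Regularity property absorbs the possible size mismatch between two elements sharing a face, and that the passage from the continuous trace inequality to the element-local polynomial-approximation bounds preserves the correct power of $h$ --- which is exactly where the matching simplicial submesh in Assumption~\ref{as:mesh} enters in an essential way. Everything else is routine bookkeeping of the exponents.
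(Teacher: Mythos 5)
The paper itself offers no proof of this proposition: it simply defers to \cite{cangiani2014hp}, and your sketch reconstructs exactly the argument of that reference (elementwise interpolant built via extension plus $hp$-approximation on the shape-regular simplicial submesh of Assumption~\ref{as:mesh}, followed by the multiplicative trace inequality for the jump and flux terms, with the correct exponent bookkeeping $h_K^{2s-1}\cdot\gamma\simeq h_K^{2s-2}$ and $h_K^{2s-3}\cdot\eta^{-1}\simeq h_K^{2s-2}$, $s=\min\{p+1,n\}$). Two minor remarks: the hypothesis $\jumpl v\jumpr=0$ on $\facesinternal$ is unnecessary --- since $\|\jumpl v-v_I\jumpr\|_{L^2(F)}$ is bounded by the sum of the two one-sided traces of the local errors, the estimate holds for genuinely broken $v\in H^n(\partition)$ as the statement requires --- and your bounds naturally yield $\|v\|^2_{H^n(K)}$ on the right-hand side (with a $p$-dependent hidden constant), which is the correct form of the estimate; the unsquared norm in the paper's display is a typo.
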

For detailed proof of the proposition, see \cite{cangiani2014hp}.

\begin{theorem} \label{th:theorem2} Let $u_h$ be the solution of \eqref{eq::weak_analitical} for any $ t \in (0,T]$. Let Assumption \ref{as:reg_analytical} be satisfied and let $u$ be the solution of Equation \eqref{eq:monodomain} where $f(u,\boldsymbol{y})$ is defined as in \eqref{eq::F_analitical} for any $t \in (0,T]$ and let assume it satisfies the following additional regularity requirements: $u \in C^1((0,T];H^n(\Omega)\cap L^\infty(\Omega)),
$ for $n \ge 2$. Let $u_h(t) \in V_h$ be the solution of \ref{eq:barreto_weak} for a sufficiently large penalty parameter $\eta$. Then, the following estimate holds:
\begin{equation}
\label{eq:theorem_regularity}
\begin{aligned}
        \tnorm e_h(t)\tnorm_{\epsilon}^2 \lesssim & \sum_{K\in\partition} h_K^{2\min\{p+1,n\}-2}\int_0^t \left(\|\dot{u}(s)\|^2_{H^n(K)} + \|u(s)\|^2_{H^n(K)}\right)\mathrm{d}s \\ + & \sum_{K\in\partition} h_K^{4\min\{p+1,n\}-4}\int_0^t \|u(s)\|^4_{H^n(K)}\mathrm{d}s \\ + & \sum_{K\in\partition} \left(h_K^{2\min\{p+1,n\}-2} \|u(t)\|^2_{H^n(K)} + h_K^{4\min\{p+1,n\}-4} \|u(t)\|^4_{H^n(K)}\right) \quad t \in (0,T],
\end{aligned}
\end{equation}
under the following additional hypothesis of the constants: $\mu-a\,\chi_m\,(M_I\,C_{E_2}+C_S\,C_{E_4})\phi_\infty-a\,\chi_m\,C_{E_2}\theta_\infty > 0$, where $C_{E_q}$ is the discrete Sobolev embedding constant for the $L^q(\Omega)$ space and $C_S$ is defined in Theorem \ref{theorem:theorem1}.
\end{theorem}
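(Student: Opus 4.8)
The plan is to run the classical energy argument for a semilinear parabolic Galerkin scheme: split the error through the interpolant of Proposition~\ref{as:estimate}, exploit coercivity and continuity of $\mathcal{A}$, and use discrete Sobolev embeddings together with the $L^2$ stability of Theorem~\ref{theorem:theorem1} to tame the cubic reaction. Write $e_h = u - u_h = \eta_I + e_\pi$ with $\eta_I = u - u_I$ ($u_I \in V_h^{\mathrm{DG}}$ the interpolant) and $e_\pi = u_I - u_h \in V_h^{\mathrm{DG}}$. Since the coefficients are homogeneous, $u \in C^1((0,T];H^n(\Omega))$ with $n\ge 2$, and the datum is homogeneous Neumann, $\mathcal{A}$ is consistent ($\jumpl u\jumpr = \mathbf{0}$, $\averagel \boldsymbol{\Sigma}\nabla u\averager = \boldsymbol{\Sigma}\nabla u$), so subtracting \eqref{eq::weak_analitical} from the consistent reformulation of \eqref{eq:monodomain} gives the Galerkin orthogonality
\begin{equation*}
\chi_m C_m(\dot e_h,v_h)_\Omega + \mathcal{A}(e_h,v_h) + \chi_m\big(r_{\mathrm{ion}}(u)-r_{\mathrm{ion}}(u_h),v_h\big) = 0 \qquad \forall v_h\in V_h^{\mathrm{DG}}.
\end{equation*}
Testing with $v_h = e_\pi$, using $2(\dot e_\pi,e_\pi)_\Omega = \frac{d}{dt}\|e_\pi\|^2$ and coercivity \eqref{eq::coercivity_2},
\begin{equation*}
\tfrac{\chi_m C_m}{2}\tfrac{d}{dt}\|e_\pi\|^2 + \mu\|e_\pi\|_{\mathrm{DG}}^2 \le -\chi_m C_m(\dot\eta_I,e_\pi)_\Omega - \mathcal{A}(\eta_I,e_\pi) - \chi_m\big(r_{\mathrm{ion}}(u)-r_{\mathrm{ion}}(u_h),e_\pi\big).
\end{equation*}

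I would dispatch the first two right-hand terms by Cauchy--Schwarz/Young on $(\dot\eta_I,e_\pi)_\Omega$ (leaving $\|\dot\eta_I\|^2$, with $\dot\eta_I = \dot u - (\dot u)_I$) and by continuity \eqref{eq::coercivity_1} plus Young on $\mathcal{A}(\eta_I,e_\pi)$ (absorbing a fraction of $\mu\|e_\pi\|_{\mathrm{DG}}^2$, leaving $\tnorm\eta_I\tnorm_{\mathrm{DG}}^2$). Proposition~\ref{as:estimate} and the companion $L^2$/$L^4$ polynomial approximation estimates on polytopic meshes then bound all $\eta_I$-quantities and, after integration in time and a final triangle inequality $\tnorm e_h\tnorm_\epsilon \le \tnorm\eta_I\tnorm_\epsilon + \|e_\pi\|_\epsilon$, yield the $h_K^{2\min\{p+1,n\}-2}$ and $h_K^{4\min\{p+1,n\}-4}$ contributions in \eqref{eq:theorem_regularity}, the pointwise-in-$t$ ones coming from $\eta_I(t)$ and the integrated ones from $\int_0^t$.

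The core is the reaction term $r_{\mathrm{ion}}(u)-r_{\mathrm{ion}}(u_h) = a\big[(u^3-u_h^3)-\phi(u^2-u_h^2)+\theta(u-u_h)\big]$, where in each factor I insert $u_I$. For the cubic, $u^3-u_h^3 = (u^3-u_I^3) + (u_I^3-u_h^3)$; using $u_h = u_I - e_\pi$,
\begin{equation*}
-\chi_m a(u_I^3-u_h^3,e_\pi) = -3\chi_m a\!\int_\Omega u_I^2 e_\pi^2 \;+\; 3\chi_m a\!\int_\Omega u_I e_\pi^3 \;-\; \chi_m a\,\|e_\pi\|_{L^4}^4 .
\end{equation*}
The last term has the favourable sign and is exactly the $L^4$ dissipation in $\|\cdot\|_\epsilon$; the $u_I^2 e_\pi^2$ term is $\le 0$ and discarded; $3\chi_m a\int_\Omega u_I e_\pi^3$ is controlled by $\|u_I\|_{L^\infty}$ (bounded via $M_I$), Hölder $\|e_\pi\|_{L^3}^3\le\|e_\pi\|\,\|e_\pi\|_{L^4}^2$ and Young, giving a small multiple of $\|e_\pi\|_{L^4}^4$ (absorbed) plus a multiple of $\|e_\pi\|^2$ (kept for Grönwall). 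The interpolation remainders $u^3-u_I^3 = \eta_I(u^2+uu_I+u_I^2)$, $u^2-u_I^2$ and $u-u_I$ are estimated with the $L^\infty$ bounds on $u$ and $u_I$ and the $L^2$/$L^4$ approximation estimates. The genuinely dangerous pieces come from the quadratic and linear parts: writing $u+u_h = u_I + u_h + \eta_I$, the terms $\int_\Omega\phi\,u_I e_\pi^2$, $\int_\Omega\phi\,u_h e_\pi^2$ and $\int_\Omega\theta\,e_\pi^2$ appear; I bound the first using $\|u_I\|_{L^\infty}$ and $\|e_\pi\|_{L^2}^2 \le C_{E_2}\|e_\pi\|_{\mathrm{DG}}^2$, the second using $\|u_h(t)\|\le C_S$ from Theorem~\ref{theorem:theorem1} and $\|e_\pi\|_{L^4}^2 \le C_{E_4}\|e_\pi\|_{\mathrm{DG}}^2$, the third using $C_{E_2}$ again, while the remaining $\int_\Omega\phi\,\eta_I(\cdot)e_\pi$ and $\int_\Omega\phi\,e_\pi^3$ terms are handled by Hölder/Young into interpolation bounds and absorbable $L^4$/DG contributions. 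Collecting, the coefficient of $\|e_\pi\|_{\mathrm{DG}}^2$ so produced is $a\chi_m(M_I C_{E_2}+C_S C_{E_4})\phi_\infty + a\chi_m C_{E_2}\theta_\infty$, and the structural hypothesis of the theorem is precisely that $\mu$ exceeds it, so this is absorbed into $\mu\|e_\pi\|_{\mathrm{DG}}^2$.

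After all absorptions I am left with $\frac{d}{dt}\|e_\pi\|^2 + c_1\|e_\pi\|_{\mathrm{DG}}^2 + c_2\|e_\pi\|_{L^4}^4 \le c_3\|e_\pi\|^2 + \mathcal{R}(t)$, where $\mathcal{R}(t)$ gathers the interpolation bounds of the previous steps. Integrating on $(0,t)$ (taking $u_h^0 = u_I(0)$, so $e_\pi(0)=0$ and no initial term is needed), applying Grönwall's lemma, and combining with the triangle inequality and the approximation estimates for $\eta_I$ gives \eqref{eq:theorem_regularity}. The main obstacle is exactly this nonlinear step: because we test against $e_\pi$ and not $e_h$, the monotonicity of $u\mapsto u^3$ is lost, and $u_h$ is not a priori bounded in $L^\infty$ uniformly in $h$; the remedy — retaining $\|e_\pi\|_{L^4}^4$ with its good sign, trading the surviving $e_\pi^2$ and $e_\pi^3$ contributions against the DG-coercivity and the $L^4$-dissipation via discrete Sobolev embeddings, and invoking the $L^2$ a priori bound of Theorem~\ref{theorem:theorem1} for the $u_h$-dependent term — is precisely what forces the smallness requirement on $\mu$, and carefully tracking the constants $M_I$, $C_S$, $C_{E_2}$, $C_{E_4}$, $\phi_\infty$, $\theta_\infty$ through it is the bulk of the work.
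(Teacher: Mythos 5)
Your proposal is correct and follows essentially the same route as the paper's proof: split the error through the interpolant, test with the discrete error component, use coercivity/continuity of $\mathcal{A}$, the $L^\infty$ bound $M_I$ on $u_I$, the stability constant $C_S$ of Theorem~\ref{theorem:theorem1} together with the discrete embeddings $C_{E_2},C_{E_4}$ to absorb the quadratic and linear reaction contributions under exactly the stated condition on $\mu$, then Gr\"onwall, the interpolation estimates of Proposition~\ref{as:estimate}, and a final triangle inequality. The only (immaterial) deviation is your binomial expansion of $u_I^3-u_h^3$, which keeps the sign-definite $\int_\Omega u_I^2 e_\pi^2$ term and the $L^4$ dissipation explicit and handles $\int_\Omega u_I e_\pi^3$ by H\"older/Young, whereas the paper writes $u_I^3-u_h^3=e_h^3+3u_Iu_he_h$ and bounds the mixed term via $C_S$; both give the same structural hypothesis on the constants and the same convergence rates.
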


\begin{remark}
The requirement of Theorem  \ref{th:theorem2} about the coefficients 
$\mu-a\,\chi_m\,(M_I\,C_{E_2}+C_S\,C_{E_4})\phi_\infty-a\,\chi_m\,C_{E_2}\theta_\infty > 0$ is reasonable considering the values of the constants in our application. 
\end{remark}

\begin{proof}
First of all, we subtract the Equation \eqref{eq::F_analitical} from Equation \eqref{eq::weak_analitical} to obtain:
\begin{equation*}
\begin{aligned}
         \chi_m C_m(\dot{u} - \dot{u}_h,v_h)_{\Omega} + \mathcal{A}(u - u_h,v_h) + \chi_m r_{\text{ion}}(u,v_h) - \chi_m r_{\text{ion}}(u_h,v_h) = 0 \quad \forall v_h \in V_h^\mathrm{DG}. 
\end{aligned}
\end{equation*}
We define the errors $e_h = u_I -u_h$ and $e_I = u - u_I$, where $u_I$ is a suitable interpolant such that $e_h(0) = 0$. 
By testing against $e_h$ we have:
\begin{equation*}
\begin{aligned}
         \frac{\chi_m C_m}{2} \|\dot{e_h}\|^2 + \mathcal{A}(e_h,e_h) +\chi_m r_{\text{ion}}(u(t),e_h) - \chi_m r_{\text{ion}}(u_h(t),e_h) = \chi_m C_m(\dot{e}_I,e_h)_{\Omega} + \mathcal{A}(e_I,e_h), 
\end{aligned}
\end{equation*}
\noindent where $r_{\mathrm{ion}}(u(t),e_h) - r_{\mathrm{ion}}(u_h(t),e_h)$ is defined as:
\begin{equation*}
\begin{aligned}
         r_{\mathrm{ion}}(u,e_h) - r_{\mathrm{ion}}(u_h,e_h) =a(u^3- u_h^3,e_h)_{\Omega} - \phi (u^2- u_h^2,e_h)_{\Omega} + \theta (u- u_h,e_h)_{\Omega}.
\end{aligned}
\end{equation*}
Exploiting the coercivity of the bilinear form in \eqref{eq::coercivity_2}, integrating between $0$ and $t$. We remark that $e_h(0)=0$. Then, for large values for the penalty coefficient $\eta$, we obtain:
\begin{equation*}
\begin{aligned}
     \frac{C_m}{2}&\|e_h(t)\|^2 + \int_{0}^t\frac{\mu}{\chi_m}\|e_h(s)\|^2_{\mathrm{DG}} \mathrm{d}s +  \int_{0}^t  \underbrace{a(u(s)^3- u_h(s)^3,e_h)_{\Omega}}_{\mathrm{(I)}}\mathrm{d}s  \le \int_0^t C_m\|\dot{e}_I\|\|e_h\| \mathrm{d}s  \\ + & \int_{0}^t\frac{1}{\chi_m}|\mathcal{A}(e_I(s),e_h(s))|\mathrm{d}s +\int_{0}^t \underbrace{|a \phi (u(s)^2- u_h(s)^2,e_h)_{\Omega}|}_{\mathrm{(II)}} \mathrm{d}s+\int_{0}^t  \underbrace{|a\theta(u(s)- u_h(s),e_h)_{\Omega}|}_{\mathrm{(III)}}\mathrm{d}s.
\end{aligned}
\end{equation*}
We can treat the nonlinear term (I) by rewriting the difference as follows:
\begin{itemize}
    \item[(I)] We first write: 
\begin{equation*}
\begin{aligned}
         u^3 - u_h^3 &= u^3 - u_I^3 + u_I^3 - u_h^3  \\ 
         &= (u_I -u_h)^3 + (u - (u_I))^3 + 3u_Iu_h(u_I -u_h) + 3u_Iu(u-u_I)  \\ & = (e_h)^3 + (e_I)^3 +3u_Iu_h(e_h) + 3u_Iu(e_I).
\end{aligned}
\end{equation*}
This decomposition gives rise to the following term:
\begin{equation*}
\begin{split}
    (u^3 - u_h^3,e_h)_\Omega = & \left((e_h)^3 + (e_I)^3 +3u_Iu_h(e_h) + 3u_Iu(e_I),e_h\right)_\Omega  \\
    = & \|e_h\|_{L^4(\Omega)}^4 + \underbrace{\left((e_I)^3,e_h\right)_\Omega}_{\mathrm{(a)}} + 3 \underbrace{\left(u_I\,u_h\, e_h, e_h\right)_\Omega}_{\mathrm{(b)}} + 3 \underbrace{\left(u_I\,u\,e_I ,e_h\right)_\Omega}_{\mathrm{(c)}}.
\end{split}
\end{equation*}
We can now treat the terms separately as follows:
\begin{itemize}
    \item[(a)] can be bounded using H\"older's inequality and Young's inequality both with $p = 4/3 $ and $p^* = 4$:
    \begin{equation*}
\begin{aligned}
         |(e_I^3,e_h)_{\Omega}| \le \|e_I\|^{3}_{L^4(\Omega)}\|e_h\|_{L^4(\Omega)} \le  \frac{3}{4k_1}\|e_I\|^{4}_{L^4(\Omega)} + \frac{k_1}{4}\|e_h\|^4_{L^4(\Omega)}.
\end{aligned}
\end{equation*}
    \item[(b)] can be bounded using $L^{\infty}$-bound of the interpolant, the stability estimate of the DG solution of Theorem \ref{theorem:theorem1}, H\"older's inequality, the discrete Sobolev-Poincaré-Wirtinger inequality \cite{pietro2020hybrid}:
\begin{equation*}
\begin{aligned}
         |(u_I\,u_h\,e_h,e_h)_{\Omega}| \le \|u_I\|_{L^{\infty}(\Omega)}\|u_h\|\|e_h\|^2_{L^4(\Omega)} \le M_1 C_S C_{E_4}\|e_h\|_{\mathrm{DG}}^2.
\end{aligned}
\end{equation*}
    \item[(c)] can be bounded using $L^{\infty}$-bound of the interpolant and the continuous solution and the Cauchy-Schwarz inequality:
    \begin{equation*}
\begin{aligned}
         |(u_Iue_I,e_h)_{\Omega}| \le \|u\|_{L^{\infty}}\|u_I\|_{L^{\infty}}\|e_I\|\|e_h\| \lesssim \|e_I\|\|e_h\|.
\end{aligned}
\end{equation*}
\end{itemize}
    \item[(II)] We write:
        \begin{equation*}
\begin{aligned}
         u^2 - u_h^2 = & u^2 - u_I^2 + u_I^2 - u_h^2  \\ 
         = & u^2 - u(u_I) + u(u_I) - u_I^2 + u_I^2 - u_Iu_h + u_Iu_h -  u_h^2  \\ 
         = & u(u - u_I) + u_I (u -u_I) + u_I(u_I - u_h) +u_h(u_I - u_h)  \\ 
         = & \underbrace{ue_I}_{\mathrm{(a)}} + \underbrace{u_I e_I}_{\mathrm{(b)}} + \underbrace{u_Ie_h}_{\mathrm{(c)}} + \underbrace{u_he_h}_{\mathrm{(d)}}.
\end{aligned}
\end{equation*}
\begin{itemize}
    \item (a) can be bounded using $L^{\infty}$-bound of the continuous solution, and H\"older's inequality:
        \begin{equation*}
\begin{aligned}
         |(u\,e_I,e_h)_{\Omega}| = \|u\|_{L^{\infty}(\Omega)}|(e_I,e_h)_{\Omega}| \le M_c \|e_h\|\|e_I\|
\end{aligned}
\end{equation*}
    \item (b) can be bounded using $L^{\infty}$-bound of the interpolant, and H\"older's inequality:
        \begin{equation*}
\begin{aligned}
         |(u_Ie_I,e_h)_{\Omega}| = \|u_I\|_{L^{\infty}(\Omega)}|(e_I,e_h)_{\Omega}| \le M_I\|e_h\|\|e_I\|.
\end{aligned}
\end{equation*}
    \item (c) can be bounded using the $L^\infty$-bound of the interpolant, H\"older's inequality, and the\\ Sobolev–Poincaré–Wirtinger discrete inequality \cite{pietro2020hybrid}:
        \begin{equation*} 
\begin{aligned}
         |(u_I(u_I - u_h),e_h)_{\Omega}| = \|u_I\|_{L^{\infty}(\Omega)}(e_h,e_h)_{\Omega} \le \|u_I\|_{L^{\infty}(\Omega)}\|e_h\|^2 \le M_IC_{E_2}\|e_h\|^2_{\mathrm{DG}}.
\end{aligned}
\end{equation*}
\item (d) can be bounded using H\"older's inequality, and Equation \eqref{eq::theorem1}.
\begin{equation*}
\begin{aligned}
         |(u_h e_h,e_h)_{\Omega}| \leq  \|u_h\|\|e_h\|^2_{L^4(\Omega)}\le C_SC_{E_4}\|e_h\|_{\mathrm{DG}}^2.
\end{aligned}
\end{equation*}
where $C_S$ is the constant defined in Equation \eqref{eq::theorem1}.
\end{itemize}
\item[(III)] can be bounded by means of H\"older's inequality after the error decomposition into $e_I+e_h$.
\end{itemize}
We fix the constant derived from the application of Young's inequality as $k_1 = 2/a$. Then, from the above bounds and by also using the property of DG-norms, we can write:
\begin{equation*}
\begin{aligned}
     \frac{C_m}{2}\|e_h(t)\|^2 + & \int_{0}^t\left(\frac{\mu}{\chi_m}-a\,(M_I\,C_{E_2}+C_S\,C_{E_4})\phi_\infty-a\,C_{E_2}\theta_\infty\right)\|e_h(s)\|^2_{\mathrm{DG}} \mathrm{d}s +  \int_{0}^t  \frac{1}{2} \|e_h(s)\|^4_{L^4(\Omega)}\mathrm{d}s  \le \\ + & \int_0^t  \frac{3a^2}{8}\|e_I(s)\|^{4}_{L^4}\mathrm{d}s + \int_0^t a\,M_I\,C_S\|e_h(s)\|\|e_h(s)\|_{\mathrm{DG}}\mathrm{d}s + \int_{0}^t\frac{a M}{\chi_m}\tnorm e_I(s)\tnorm_{\mathrm{DG}}\|e_h(s)\|_{\mathrm{DG}}\mathrm{d}s \\ + & \int_{0}^t  a(\phi_\infty M_c+\phi_\infty M_I+3M_I\,M_c+\theta_\infty)\|e_h(s)\|\|e_I(s)\| \mathrm{d}s + \int_0^t C_m\|\dot{e}_I\|\|e_h\| \mathrm{d}s
\end{aligned}
\end{equation*}
By assumption, we need $\mu-a\,\chi_m\,(M_I\,C_{E_2}+C_S\,C_{E_4})\phi_\infty-a\,\chi_m\,C_{E_2}\theta_\infty > 0$ then exploiting the energy norm defined in \eqref{eq::energy_norm} we get the following estimate, where we can neglect the constants dependencies by means of $\lesssim$:
\begin{equation*}
\begin{aligned}
     \tnorm e_h(t)\tnorm^2_\epsilon \lesssim & \int_0^t  \left(\|\dot{e}_I(s)\|^2 + \|e_I(s)\|^2 + \tnorm e_I(s)\tnorm_{\mathrm{DG}}^2 + \|e_I(s)\|^{4}_{L^4}\right)\mathrm{d}s + \int_0^t \|e_h(s)\|^2\mathrm{d}s.
\end{aligned}
\end{equation*}
By application of H\"older's inequality and of Gr\"onwall's lemma, we obtain: 
\begin{equation*}
\begin{aligned}
         &\tnorm e_h(t)\tnorm_{\epsilon}^2 \lesssim \int_0^t \left(\|\dot{e}_I(s)\|^2 + \|e_I(s)\|^2 + \tnorm e_I(s)\tnorm_{\mathrm{DG}}^2 + \|e_I(s)\|^{4}_{L^4}\right)\mathrm{d}s.
\end{aligned}
\end{equation*}
Using the interpolation bounds of Proposition \ref{as:estimate}, we find:
\begin{equation*}
\begin{aligned}
         &\tnorm e_h(t)\tnorm_{\epsilon}^2 \lesssim \sum_{K\in\partition} \left(h_K^{2\min\{p+1,n\}-2}\int_0^t \left(\|\dot{u}(s)\|^2_{H^n(K)} + \|u(s)\|^2_{H^n(K)}\right)\mathrm{d}s + h_K^{4\min\{p+1,n\}-4}\int_0^t \|u(s)\|^4_{H^n(K)}\mathrm{d}s\right).
\end{aligned}
\end{equation*}
Finally, we use the triangular inequality to estimate the error $\tnorm u-u_h\tnorm_\epsilon \leq \tnorm e_h\tnorm_\epsilon + \tnorm e_I \tnorm_\epsilon$ and the thesis follows.

\end{proof}

\section{Numerical results}
\label{sec:9}
\noindent In this section, we numerically verify the convergence of the monodomain model \eqref{eq:monodomain} with ionic current defined by \eqref{eq::F_analitical}, exploiting the analytical solution described in Section \ref{sec:2}.  We consider a square domain $\Omega = (-3,3)^2$, discretized with a mesh constructed by PolyMesher \cite{talischi2012polymesher} and we numerically approximate the energy norm defined in \eqref{eq::energy_norm}. We analyze the configuration defined by the parameter values reported in Table \ref{table::analitical}, taken from \cite{pezzuto2016space}.

\begin{figure}[h]
    \centering
   \begin{subfigure}{0.48\textwidth}
   \centering
    \resizebox{\textwidth}{!}{
\begin{tikzpicture}{
\begin{axis}[%
width=3.875in,
height=2in,
at={(1.733in,0.687in)},
scale only axis,
xmode=log,
xmin=0.08,
xmax=1.3,
xminorticks=true,
xlabel = { $h$ [-]},
ylabel = { $||V_m(T)-V_m^h(T)||_\varepsilon$/$||V_m(T)||_\varepsilon$},
ymode=log,
ymin=0,
ymax=0.1,
yminorticks=true,
axis background/.style={fill=white},
title={\color{black}  Energy norm convergence in $h$},
xmajorgrids,
xminorgrids,
ymajorgrids,
yminorgrids,
legend style={legend cell align=left, align=left, draw=white!15!black}
]
       			      			
\addplot [color=red, line width=2.0pt]
  table[row sep=crcr]{%
    1.1994  0.0550226964217769 \\
    0.6169  0.0207642270108950 \\
    0.3507  0.00731352579107177\\
    0.1554  0.00183351969564353\\
    0.1003  0.000962615462632588\\
};
\addlegendentry{$p=1$}
			
\addplot [color=orange, line width=2.0pt]
  table[row sep=crcr]{%
    1.1994  0.0208 \\
    0.6169  0.0052 \\
    0.3507  0.0012 \\
    0.1554  1.6695e-04 \\
    0.1003  5.8227e-05 \\
};
\addlegendentry{$p=2$}
	    		
\addplot [color=green, line width=2.0pt]
  table[row sep=crcr]{%
    1.1994  0.0103  \\
    0.6169  0.0017 \\
    0.3507  2.4127e-04  \\
    0.1554  1.5639e-05  \\
    0.1003  3.9343e-06  \\
};
\addlegendentry{$p=3$}      
    
\addplot [color=blue, line width=2.0pt]
  table[row sep=crcr]{%
    1.1994  0.0034      \\
    0.6169  3.9173e-04  \\
    0.3507  2.6561e-05  \\
    0.1554  7.1122e-07  \\
    0.1003  2.1038e-07  \\  
};
\addlegendentry{$p=4$}

\node[right, align=left, text=black, font=\footnotesize]
at (axis cs:0.1805,0.00075) {$1$};

\addplot [color=black, line width=1.5pt]
  table[row sep=crcr]{%
0.180   0.00100\\
0.130   0.00066\\
0.180   0.00066\\
0.180   0.00100\\
};

\node[right, align=left, text=black, font=\footnotesize]
at (axis cs:0.1805,0.00006) {$2$};

\addplot [color=black, line width=1.5pt]
  table[row sep=crcr]{%
0.180   0.0000800\\
0.130   0.0000452\\
0.180   0.0000452\\
0.180   0.0000800\\
};

\node[right, align=left, text=black, font=\footnotesize]
at (axis cs:0.1805,4e-6) {$3$};

\addplot [color=black, line width=1.5pt]
  table[row sep=crcr]{%
0.180   0.0000060\\
0.130   0.0000025\\
0.180   0.0000025\\
0.180   0.0000060\\
};

\node[right, align=left, text=black, font=\footnotesize]
at (axis cs:0.1805,2e-7) {$4$};
 
\addplot [color=black, line width=1.5pt]
  table[row sep=crcr]{%
0.180   4.00e-07\\
0.130   1.27e-07\\
0.180   1.27e-07\\
0.180   4.00e-07\\
};

\end{axis}}
\end{tikzpicture}}
\end{subfigure}
\begin{subfigure}[b]{0.48\textwidth}
    \resizebox{\textwidth}{!}{
\begin{tikzpicture}{
\begin{axis}[%
width=3.875in,
height=2in,
at={(1.733in,0.687in)},
scale only axis,
xmin=0.5,
xmax=8.5,
xminorticks=true,
xlabel = { $p$ [-]},
ylabel = { $||V_m(T)-V_m^h(T)||_\varepsilon/||V_m(T)||_\varepsilon$},
ymode=log,
ymin=0,
ymax=0.1,
yminorticks=true,
axis background/.style={fill=white},
title={\color{black}  Energy norm convergence in $p$},
xmajorgrids,
xminorgrids,
ymajorgrids,
yminorgrids,
legend style={legend cell align=left, align=left, draw=white!15!black}
]

\addplot [color=green, line width=2.0pt]
  table[row sep=crcr]{%
     1  0.0235\\
     2  0.0064 \\
     3  0.0019 \\
     4  7.3366e-04 \\
     5  2.1887e-04 \\
     6  6.8778e-05 \\
     7  2.0322e-05 \\
     8  6.1048e-06\\
};
\addlegendentry{$\|\cdot\|_\varepsilon$}        
\end{axis}}
\end{tikzpicture}}    
\end{subfigure}%
\caption{Computed errors and convergence rates as a function of $h$ (left) and $p$ (right).}
    \label{fig:convergence_h}
\end{figure}
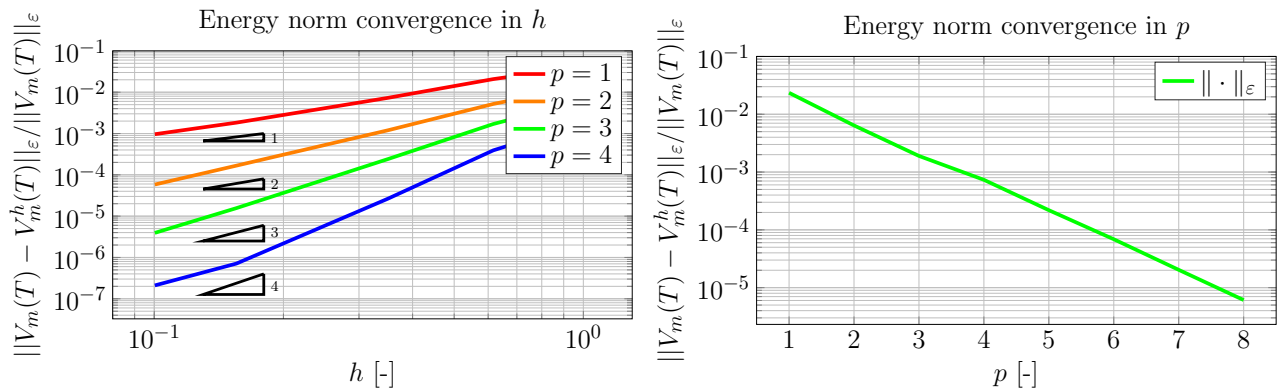

In Figure \ref{fig:convergence_h} we report the computed relative errors in the energy norm at the final time $T=1e-4$ with $\Delta t = 1e-6$. 
The errors was calculated keeping fixed the polynomial order of the space approximation and using different mesh refinements ($N_{el} = 80,300,950,2000,12000$).   

\noindent We observe that the theoretical rates of convergence are achieved for all the polynomial degrees $p$; indeed, the rate of convergence equals the degree of approximation, as proved in Theorem 2. We also perform a convergence analysis concerning the polynomial order $p$ with a mesh of 300 elements. The results are reported in Figure \ref{fig:convergence_h}, where we observe exponential convergence with the energy norm defined in \eqref{eq::energy_norm}. 

\begin{table}[h]
\centering
\caption{Values of the model parameters used in the convergence analysis\label{table::analitical}}%
\begin{tabular}{lll}  
\toprule
 \textbf{Parameters} & \textbf{Values} &  \textbf{Unit}\\
\midrule
    $\sigma_n$ &  $0.17$ & $\mathrm{mS\cdot mm^{-1}}$  \\
    $\sigma_t$ &  $0.62$ & $\mathrm{mS\cdot mm^{-1}}$  \\
    $V_{\text{depol}}$ &  $30$ & $\mathrm{mV}$  \\
    $V_{\text{rest}}$ &  $-85$ & $\mathrm{mV}$  \\
    $V_{\text{thres}}$ &  $-57.6$ & $\mathrm{mV}$  \\
    $a$ &  $1.4e-5$ & $\mathrm{mS \cdot mm^{-2} \cdot mV^{-2}}$  \\
    $\mathrm{\chi_m}$ &  $140$ & $\mathrm{mm^{-1}}$  \\
    $\mathrm{C_m}$ &  $0.01$ & $\mathrm{\mu F \cdot mm^{-2}}$  \\
    $c$ &  $0.5$ &  $\mathrm{mm \cdot ms^{-1}}$  \\
    $\epsilon$ &  $0.2$ & $\mathrm{mm}$  \\    
\bottomrule
\end{tabular}
\label{tab:analytical}
\end{table}

\subsubsection*{Approximation of the conduction speed}
\noindent Next, we investigate how modifying the element size and increasing the polynomial order impact conduction velocity within the context of the PolyDG method impacts the quality of the approximate solutions. 
In Figure \ref{fig:convergence_cv} (left), we observe the influence of varying the element size while maintaining a constant polynomial order on the conduction velocity. A clearer perspective on this relationship can be found in Figure \ref{fig:convergence_cv} (center), where the element size is constant while the polynomial order increases. When working with an equivalent number of degrees of freedom, employing higher-order elements instead of smaller lower-order elements yields a conduction velocity approximation that aligns more closely with the exact solution, as demonstrated in Figure \ref{fig:convergence_cv} (right).  

\begin{figure}[h!]
\centering
\begin{subfigure}[b]{0.33\textwidth}
    \resizebox{\textwidth}{!}{
\begin{tikzpicture}{
\begin{axis}[%
width=3.775in,
height=3in,
at={(1.733in,0.687in)},
scale only axis,
xmin=0.2,
xmax=2.1,
xminorticks=true,
xlabel = {\Large $h$ [-]},
ylabel = {\Large $c_v$ [mm/ms]},
ymin=0.4,
ymax=1.3,
yminorticks=true,
axis background/.style={fill=white},
title={\color{black}\Large Velocity approximation changing $h$},
xmajorgrids,
xminorgrids,
ymajorgrids,
yminorgrids,
legend style={legend cell align=left, align=left, draw=white!15!black}
]
\addplot [color=green, mark=square,mark size=3pt,line width=2.0pt]
  table[row sep=crcr]{%
    1.9169  1.13591     \\ 
    1.1994  0.81615     \\ 
    0.6169  0.61173     \\ 
    0.3507  0.52782     \\ 
};
\addlegendentry{$p1$}
\addplot [color=blue, mark=triangle,mark size=3pt,line width=2.0pt]
  table[row sep=crcr]{%
    1.9169  0.86086     \\ 
    1.1994  0.64204     \\ 
    0.6169  0.50942     \\ 
    0.3507  0.5082     \\ 
};
\addlegendentry{$p2$}
\addplot [color=red,mark=10-pointed star,mark size=3pt, line width=2.0pt]
  table[row sep=crcr]{%
    1.9169  0.72583     \\ 
    1.1994  0.54317     \\ 
    0.6169  0.50319     \\ 
    0.3507  0.49936     \\ 
};
\addlegendentry{$p3$}
\addplot [color=orange, mark=diamond,mark size=3pt,line width=2.0pt]
  table[row sep=crcr]{%
    1.9169  0.54167     \\ 
    1.1994  0.50835    \\ 
    0.6169  0.50147     \\ 
    0.3507  0.4990     \\ 
};
\addlegendentry{$p4$}
\addplot [color=purple, mark=o,mark size=3pt,line width=2.0pt]
  table[row sep=crcr]{%
    1.9169  0.50847     \\ 
    1.1994  0.50273    \\ 
    0.6169  0.50147     \\ 
    0.3507  0.4990     \\ 
};
\addlegendentry{$p5$}
\addplot [color=purple, mark=asterisk,mark size=3pt,line width=2.0pt]
  table[row sep=crcr]{%
    1.9169   0.4953     \\ 
    1.1994   0.4981    \\ 
    0.6169  0.50147     \\ 
    0.3507  0.499379     \\ 
};
\addlegendentry{$p6$}
\draw[dashed,black, line width=2.0pt] (0,0.5) -- (4,0.5);      
\end{axis}}
\end{tikzpicture}}    
\end{subfigure}\hfill
\begin{subfigure}[b]{0.33\textwidth}
    \resizebox{\textwidth}{!}{
\begin{tikzpicture}{
\begin{axis}[%
width=3.775in,
height=3in,
at={(1.733in,0.687in)},
scale only axis,
xmin=0.2,
xmax=6.5,
xminorticks=true,
xlabel = {\Large $h$ [-]},
ylabel = {\Large $c_v$ [mm/ms]},
ymin=0.4,
ymax=1.3,
yminorticks=true,
axis background/.style={fill=white},
title={\color{black}\Large Velocity approximation changing $p$},
xmajorgrids,
xminorgrids,
ymajorgrids,
yminorgrids,
legend style={legend cell align=left, align=left, draw=white!15!black}
]
\addplot [color=red,mark=square, line width=2.0pt]
  table[row sep=crcr]{%
    1  1.13591     \\ 
    2  0.86086     \\
    3  0.72583     \\ 
    4  0.54167     \\ 
    5  0.50847     \\ 
    6  0.4953     \\ 
};
\addlegendentry{$h=1.91$}
\addplot [color=blue, mark=triangle,mark size=3pt,line width=2.0pt]
  table[row sep=crcr]{%
    1      0.81615 \\
    2      0.64204 \\
    3      0.54317 \\
    4      0.50835 \\
    5      0.50273 \\
    6      0.4981  \\
};
\addlegendentry{$h=1.19$}
\addplot [color=orange,mark=10-pointed star,mark size=3pt, line width=2.0pt]
  table[row sep=crcr]{%
    1      0.61173 \\
    2      0.50942 \\
    3      0.50319 \\
    4      0.50147 \\
    5      0.50147 \\
    6      0.4983  \\
};
\addlegendentry{$h=0.61$}
\addplot [color=green, mark=diamond,mark size=3pt,line width=2.0pt]
  table[row sep=crcr]{%
    1       0.52782 \\
    2      0.5082   \\
    3      0.49936  \\
    4      0.4990   \\
    5      0.499379 \\
};
\addlegendentry{$h=0.35$}
\draw[dashed,black, line width=2.0pt] (0,0.5) -- (6.5,0.5); 
\end{axis}}
\end{tikzpicture}}
\end{subfigure}\hfill
\begin{subfigure}[b]{0.33\textwidth}
    \resizebox{\textwidth}{!}{
\begin{tikzpicture}{
\begin{axis}[%
width=3.775in,
height=3in,
at={(1.733in,0.687in)},
scale only axis,
xmin=-1000,
xmax=21000,
xminorticks=true,
xlabel = {\Large ndof [-]},
ylabel = {\Large $c_v$ [mm/ms]},
ymin=0.4,
ymax=1.3,
yminorticks=true,
axis background/.style={fill=white},
title={\color{black}\Large Velocity approximation along ndof},
xmajorgrids,
xminorgrids,
ymajorgrids,
yminorgrids,
legend style={legend cell align=left, align=left, draw=white!15!black}
]
\addplot [color=red, mark=square, line width=2.0pt]
  table[row sep=crcr]{%
   90    1.13591 \\
   180   0.86086 \\
   300   0.72583 \\
   450   0.54167 \\
   630   0.50847 \\
   840   0.4953  \\
};
\addlegendentry{$h=1.91$}  
\addplot [color=blue, mark=triangle,mark size=3pt, line width=2.0pt]
  table[row sep=crcr]{%
   240   0.81615 \\
   480   0.64204 \\
   800   0.54317 \\
   1200  0.50535 \\
   1680  0.50273 \\
   2240  0.4981  \\
};
\addlegendentry{$h=1.19$}   
\addplot [color=orange, mark=10-pointed star,mark size=3pt,line width=2.0pt]
  table[row sep=crcr]{%
    900   0.61173\\
   1800   0.50942\\
   3000   0.50319\\
   4500   0.50147\\
   6300   0.49865\\
   8400   0.4983\\
};
\addlegendentry{$h=0.61$} 
\addplot [color=green, mark=diamond,mark size=3pt, line width=2.0pt]
  table[row sep=crcr]{%
   2850     0.52782 \\
   5700     0.5082  \\
   9500     0.49936 \\
   14250    0.4990  \\
   19950    0.499379  \\
};
\addlegendentry{$h=0.35$} 
\draw[dashed,black, line width=2.0pt] (-1000,0.5) -- (22000,0.5); 
\end{axis}}
\end{tikzpicture}}    
\end{subfigure}
\caption{Approximation of the velocity of numerical solution as a function of the mesh size $h$ (left), the polynomial degree $p$ (center), and the total number of dofs (right).}
\label{fig:convergence_cv}
\end{figure}
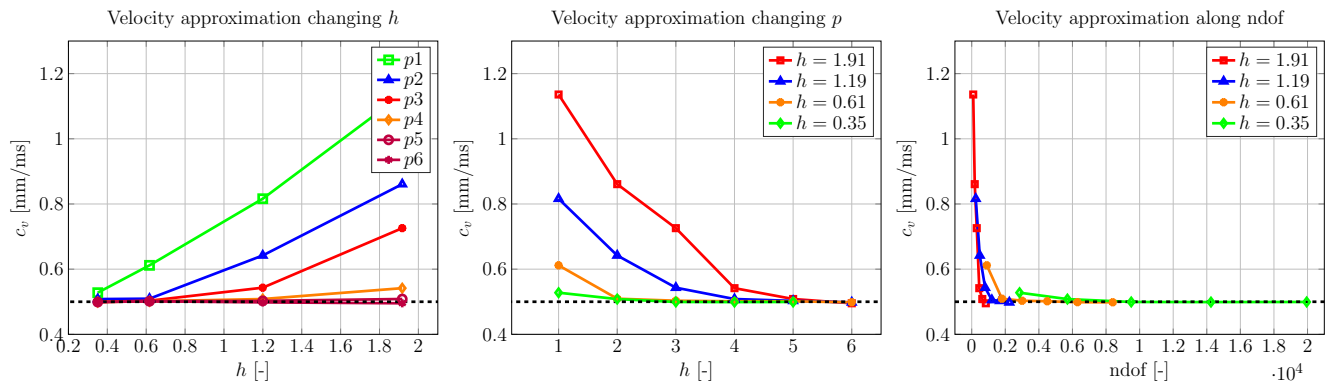

\subsubsection*{Dissipation analysis}
\noindent Using the same parameter setting described in Table \ref{tab:analytical}, we then perform an analysis concerning the undershoots and overshoots of the numerical solution with respect to the expected thresholds of the potential, which are $V_{\text{depol}} = 30 \mathrm{mV}$ and $V_{\text{thres}} = -85\mathrm{mV}$, respectively. Each plot shows the relative value of the undershoots and overshoots of the solution at a specific time in the whole domain for different values of $h$ and $p$. Specifically, in Figure \ref{fig:oscillations}, we have the evolution of the numerical oscillations as the polynomial degree varies. Figure \ref{fig:oscillations_h} highlights that the oscillations in the solution are highly associated with a moderate/high values of $h$. 

These numerical results illustrate how discretizations that increase the polynomial degree are less dissipative than those that refine the discretization.
In particular, we observe that numerical oscillations are significantly reduced by refining in $h$ but at a greater computational cost. On the other hand, increasing in $p$ implies a smaller increase in the degrees of freedom with a similar improvement in accuracy. As an example, if we consider the case $h=1.19$ and $p=2$ ($540$ dofs), we have an overshoot of $7.99\%$ and an undershoot of $4.32\%$, while $h=0.61$ and $p=1$ ($900$ dofs), we have an overshoot of $10.27\%$ and undershoot of $6.74\%$.
Moreover, if we consider the case $h=1.19$ and $p=3$ ($900$ dofs), we have an overshoot of $2.35\%$ and an undershoot of $2\%$, while $h=0.61$ and $p=2$ ($1800$ dofs), we have an overshoot of $4.5\%$ and undershoot of $2.8\%$.

\begin{figure}[h!]
\begin{subfigure}[b]{0.23\textwidth}
\centering
    \resizebox{\textwidth}{!}{
\begin{tikzpicture}{
\begin{axis}[%
width=2.975in,
height=3in,
at={(1.733in,0.687in)},
scale only axis,
xmin=0.2,
xmax=6.5,
xminorticks=true,
xlabel = {\Large $p$ [-]},
ylabel = {\Large $\%$},
ymin=-25,
ymax=40,
yminorticks=true,
axis background/.style={fill=white},
title={\color{black}\Large $h=1.91$},
xmajorgrids,
xminorgrids,
ymajorgrids,
yminorgrids,
legend style={legend cell align=left, align=left, draw=white!15!black}
]
\addplot [color=red, mark=square,line width=2.0pt]
  table[row sep=crcr]{%
    1  36.4133    \\ 
    2  18.9167    \\ 
    3  10.1233    \\ 
    4  9.9333     \\ 
    5  5.32       \\ 
    6  4.5        \\ 
};
\addlegendentry{$max$}
\addplot [color=blue, mark=square,line width=2.0pt]
  table[row sep=crcr]{%
    1  -17.5294   \\ 
    2  -10.7153   \\ 
    3  -8.2424    \\ 
    4  -5.8200    \\ 
    5  -2.78      \\ 
    6  -1.65      \\
};
\addlegendentry{$min$}    
\end{axis}}
\end{tikzpicture}}    
\end{subfigure}\hfill
\begin{subfigure}[b]{0.23\textwidth}
\centering
    \resizebox{\textwidth}{!}{
\begin{tikzpicture}{
\begin{axis}[%
width=2.975in,
height=3in,
at={(1.733in,0.687in)},
scale only axis,
xmin=0.2,
xmax=6.5,
xminorticks=true,
xlabel = {\Large $p$ [-]},
ylabel = {\Large $\%$},
ymin=-25,
ymax=40,
yminorticks=true,
axis background/.style={fill=white},
title={\color{black}\Large $h=1.19$},
xmajorgrids,
xminorgrids,
ymajorgrids,
yminorgrids,
legend style={legend cell align=left, align=left, draw=white!15!black}
]
\addplot [color=red, mark=square,line width=2.0pt]
  table[row sep=crcr]{%
    1  30.7     \\ 
    2  7.99     \\ 
    3  2.3567   \\ 
    4  1.5567   \\ 
    5  1.103    \\ 
    6  0.08     \\ 
};
\addlegendentry{$max$}
\addplot [color=blue, mark=square,line width=2.0pt]
  table[row sep=crcr]{%
    1  -14.5871  \\ 
    2  -4.3247   \\ 
    3  -2.0071   \\ 
    4  -0.5600   \\ 
    5  -0.37     \\
    6  -0.0658     \\
};
\addlegendentry{$min$}    
\end{axis}}
\end{tikzpicture}}    
\end{subfigure}\hfill
\begin{subfigure}[b]{0.23\textwidth}
\centering
    \resizebox{\textwidth}{!}{
\begin{tikzpicture}{
\begin{axis}[%
width=2.975in,
height=3in,
at={(1.733in,0.687in)},
scale only axis,
xmin=0.2,
xmax=6.5,
xminorticks=true,
xlabel = {\Large $p$ [-]},
ylabel = {\Large $\%$},
ymin=-25,
ymax=40,
yminorticks=true,
axis background/.style={fill=white},
title={\color{black}\Large $h=0.61$},
xmajorgrids,
xminorgrids,
ymajorgrids,
yminorgrids,
legend style={legend cell align=left, align=left, draw=white!15!black}
]
\addplot [color=red, mark=square,line width=2.0pt]
  table[row sep=crcr]{%
    1  10.2767  \\ 
    2  4.5067   \\ 
    3  1.3500   \\ 
    4  0.0400   \\ 
    5  0.003    \\ 
    6  0        \\ 
};
\addlegendentry{$max$}
\addplot [color=blue, mark=square,line width=2.0pt]
  table[row sep=crcr]{%
    1  -6.7424    \\ 
    2  -2.8447    \\ 
    3  -0.1271    \\ 
    4  -0.0659    \\  
    5  -0.002     \\ 
    6  0          \\ 
};
\addlegendentry{$min$}    
\end{axis}}
\end{tikzpicture}}    
\end{subfigure}\hfill
\begin{subfigure}[b]{0.23\textwidth}
\centering
    \resizebox{\textwidth}{!}{
\begin{tikzpicture}{
\begin{axis}[%
width=2.975in,
height=3in,
at={(1.733in,0.687in)},
scale only axis,
xmin=0.2,
xmax=6.5,
xminorticks=true,
xlabel = {\Large $p$ [-]},
ylabel = {\Large $\%$},
ymin=-25,
ymax=40,
yminorticks=true,
axis background/.style={fill=white},
title={\color{black}\Large $h=0.35$},
xmajorgrids,
xminorgrids,
ymajorgrids,
yminorgrids,
legend style={legend cell align=left, align=left, draw=white!15!black}
]
\addplot [color=red, mark=square,line width=2.0pt]
  table[row sep=crcr]{%
    1  0.4367   \\ 
    2  0.0433   \\ 
    3  0        \\ 
    4  0        \\
    5  0        \\  
    6  0        \\
};
\addlegendentry{$max$}
\addplot [color=blue, mark=square,line width=2.0pt]
  table[row sep=crcr]{%
    1  -0.9918    \\ 
    2  -0.0141    \\ 
    3  0          \\ 
    4  0          \\  
    5  0          \\  
    6  0          \\  
};
\addlegendentry{$min$}    
\end{axis}}
\end{tikzpicture}}    
\end{subfigure}%
\caption{Overshoot and undershoot of the numerical solution as a function of the polynomial degree $p$ for different choices of $h = 1.91,1.19,0.61,0.35$.}
\label{fig:oscillations}
\end{figure}
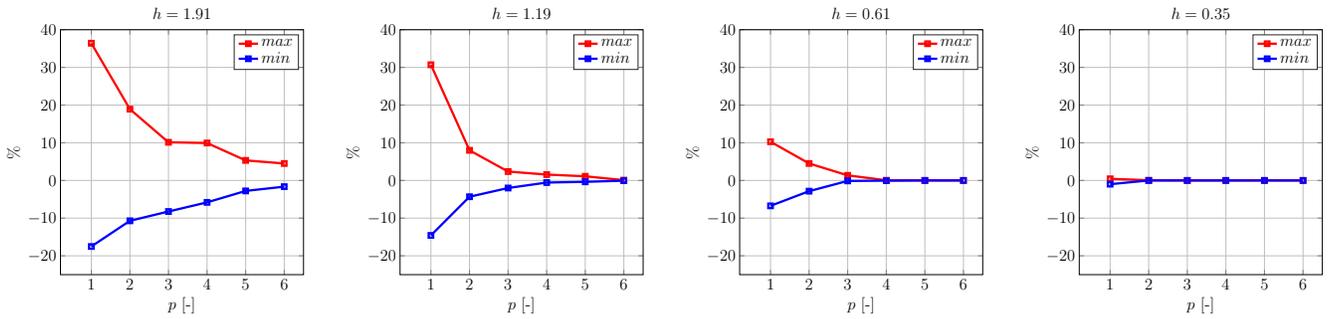
\vspace{-5mm}
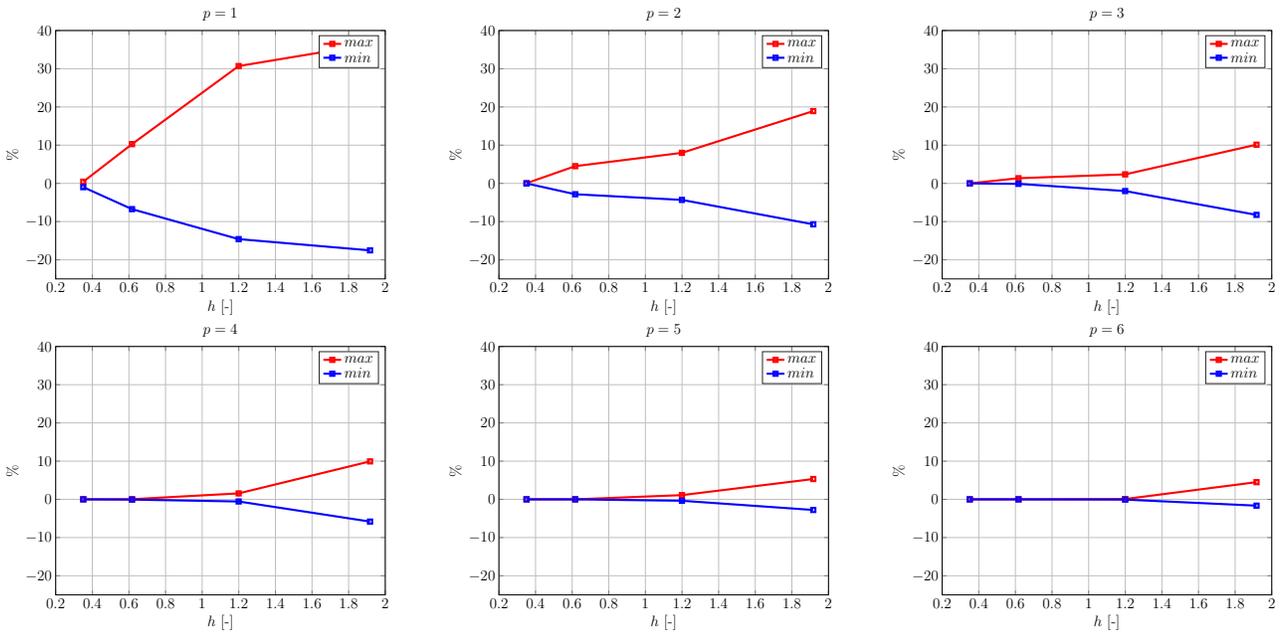
\begin{figure}[h!]
\centering
\begin{subfigure}[b]{0.33\textwidth}
    \resizebox{0.9\textwidth}{!}{
\begin{tikzpicture}{
\begin{axis}[%
width=4.375in,
height=3.3in,
at={(1.733in,0.687in)},
scale only axis,
xmin=0.2,
xmax=2,
xminorticks=true,
xlabel = {$h$ [-]},
ylabel = {$\%$},
ymin=-25,
ymax=40,
yminorticks=true,
axis background/.style={fill=white},
title={\color{black}\Large $p=1$},
xmajorgrids,
xminorgrids,
ymajorgrids,
yminorgrids,
legend style={legend cell align=left, align=left, draw=white!15!black}
]
\addplot [color=red, mark=square,line width=2.0pt]
  table[row sep=crcr]{%
    1.9169 36.4133 \\
    1.1994 30.7    \\
    0.6169 10.2767 \\
    0.3507 0.4367  \\
};
\addlegendentry{$max$}
\addplot [color=blue, mark=square,line width=2.0pt]
  table[row sep=crcr]{%
    1.9169 -17.5294 \\
    1.1994 -14.5871 \\
    0.6169 -6.7424  \\
    0.3507 -0.9918  \\
};
\addlegendentry{$min$}    
\end{axis}}
\end{tikzpicture}}    
\end{subfigure}\hfill
\begin{subfigure}[b]{0.33\textwidth}
    \resizebox{0.9\textwidth}{!}{
\begin{tikzpicture}{
\begin{axis}[%
width=4.375in,
height=3.3in,
at={(1.733in,0.687in)},
scale only axis,
xmin=0.2,
xmax=2,
xminorticks=true,
xlabel = {$h$ [-]},
ylabel = {$\%$},
ymin=-25,
ymax=40,
yminorticks=true,
axis background/.style={fill=white},
title={\color{black}\Large $p=2$},
xmajorgrids,
xminorgrids,
ymajorgrids,
yminorgrids,
legend style={legend cell align=left, align=left, draw=white!15!black}
]
\addplot [color=red, mark=square,line width=2.0pt]
  table[row sep=crcr]{%
    1.9169  18.9167  \\
    1.1994  7.99     \\
    0.6169  4.5067   \\
    0.3507  0.0433   \\
};
\addlegendentry{$max$}
\addplot [color=blue, mark=square,line width=2.0pt]
  table[row sep=crcr]{%
    1.9169  -10.7153  \\
    1.1994  -4.3247   \\
    0.6169  -2.8447   \\
    0.3507  -0.0141   \\  
};
\addlegendentry{$min$}    
\end{axis}}
\end{tikzpicture}}    
\end{subfigure}\hfill
\begin{subfigure}[b]{0.33\textwidth}
    \resizebox{0.9\textwidth}{!}{
\begin{tikzpicture}{
\begin{axis}[%
width=4.375in,
height=3.3in,
at={(1.733in,0.687in)},
scale only axis,
xmin=0.2,
xmax=2,
xminorticks=true,
xlabel = {$h$ [-]},
ylabel = {$\%$},
ymin=-25,
ymax=40,
yminorticks=true,
axis background/.style={fill=white},
title={\color{black}\Large $p=3$},
xmajorgrids,
xminorgrids,
ymajorgrids,
yminorgrids,
legend style={legend cell align=left, align=left, draw=white!15!black}
]
\addplot [color=red, mark=square,line width=2.0pt]
  table[row sep=crcr]{%
    1.9169  10.1233  \\
    1.1994  2.3567   \\
    0.6169  1.3500   \\
    0.3507  0        \\
};
\addlegendentry{$max$}
\addplot [color=blue, mark=square,line width=2.0pt]
  table[row sep=crcr]{%
    1.9169  -8.242    \\
    1.1994  -2.0071   \\
    0.6169  -0.1271   \\
    0.3507  0         \\
};
\addlegendentry{$min$}    
\end{axis}}
\end{tikzpicture}}    
\end{subfigure}\hfill
\begin{subfigure}[b]{0.33\textwidth}
    \resizebox{0.9\textwidth}{!}{
\begin{tikzpicture}{
\begin{axis}[%
width=4.375in,
height=3.3in,
at={(1.733in,0.687in)},
scale only axis,
xmin=0.2,
xmax=2,
xminorticks=true,
xlabel = {$h$ [-]},
ylabel = {$\%$},
ymin=-25,
ymax=40,
yminorticks=true,
axis background/.style={fill=white},
title={\color{black}\Large $p=4$},
xmajorgrids,
xminorgrids,
ymajorgrids,
yminorgrids,
legend style={legend cell align=left, align=left, draw=white!15!black}
]
\addplot [color=red, mark=square,line width=2.0pt]
  table[row sep=crcr]{%
    1.9169  9.9333  \\
    1.1994  1.5567   \\
    0.6169  0.0400   \\
    0.3507  0        \\
};
\addlegendentry{$max$}
\addplot [color=blue, mark=square,line width=2.0pt]
  table[row sep=crcr]{%
    1.9169  -5.8200   \\
    1.1994  -0.5600   \\
    0.6169  -0.0659   \\
    0.3507  0         \\
};
\addlegendentry{$min$}    
\end{axis}}
\end{tikzpicture}}    
\end{subfigure}\hfill
\begin{subfigure}[b]{0.33\textwidth}
    \resizebox{0.9\textwidth}{!}{
\begin{tikzpicture}{
\begin{axis}[%
width=4.375in,
height=3.3in,
at={(1.733in,0.687in)},
scale only axis,
xmin=0.2,
xmax=2,
xminorticks=true,
xlabel = {$h$ [-]},
ylabel = {$\%$},
ymin=-25,
ymax=40,
yminorticks=true,
axis background/.style={fill=white},
title={\color{black}\Large $p=5$},
xmajorgrids,
xminorgrids,
ymajorgrids,
yminorgrids,
legend style={legend cell align=left, align=left, draw=white!15!black}
]
\addplot [color=red, mark=square,line width=2.0pt]
  table[row sep=crcr]{%
    1.9169  5.32   \\
    1.1994  1.103   \\
    0.6169  0.003    \\
    0.3507  0        \\
};
\addlegendentry{$max$}
\addplot [color=blue, mark=square,line width=2.0pt]
  table[row sep=crcr]{%
    1.9169  -2.78   \\
    1.1994  -0.37   \\
    0.6169  -0.002   \\
    0.3507  0         \\
};
\addlegendentry{$min$}    
\end{axis}}
\end{tikzpicture}}    
\end{subfigure}\hfill
\begin{subfigure}[b]{0.33\textwidth}
    \resizebox{0.9\textwidth}{!}{
\begin{tikzpicture}{
\begin{axis}[%
width=4.375in,
height=3.3in,
at={(1.733in,0.687in)},
scale only axis,
xmin=0.2,
xmax=2,
xminorticks=true,
xlabel = {$h$ [-]},
ylabel = {$\%$},
ymin=-25,
ymax=40,
yminorticks=true,
axis background/.style={fill=white},
title={\color{black}\Large $p=6$},
xmajorgrids,
xminorgrids,
ymajorgrids,
yminorgrids,
legend style={legend cell align=left, align=left, draw=white!15!black}
]
\addplot [color=red, mark=square,line width=2.0pt]
  table[row sep=crcr]{%
    1.9169  4.5  \\
    1.1994  0.08   \\
    0.6169  0   \\
    0.3507  0        \\
};
\addlegendentry{$max$}
\addplot [color=blue, mark=square,line width=2.0pt]
  table[row sep=crcr]{%
    1.9169  -1.65   \\
    1.1994  -0.0658   \\
    0.6169  0   \\
    0.3507  0         \\
};
\addlegendentry{$min$}    
\end{axis}}
\end{tikzpicture}}    
\end{subfigure}%
\caption{Relative overshoot and undershoot of the numerical solution as a function of the mesh size $h$ for different polynomial degrees $p=1,...,6$.}
\label{fig:oscillations_h}
\end{figure}

\section{Conclusion}
\label{sec:10}
\noindent In this work, we have presented a theoretical analysis and a numerical investigation of PolyDG discretizations of brain electrophysiology models, namely the monodomain equation coupled with the Barreto-Cressman ionic model. Numerical results demonstrate that the  PolyDG method allows for flexibility in selecting the polynomial degree, enabling high-order accurate approximation of the electric wavefront originating from an unstable region of gray matter. Specifically, increasing the order of the approximation allows for more accurate reconstruction of conduction velocities at a lower cost than spatial refinement.
Furthermore, PolyDG is built upon polygonal agglomerated grids that capture geometric details of complex geometries characterized by highly heterogeneous materials.

We have stability and convergence error estimates for the semi-discrete formulation on a simplified monodomain problem where a non-linear dependence on the transmembrane potential characterizes the reaction term. This numerical verification confirmed the theoretical results of our analysis on polygonal meshes with an explicit treatment of the non-linear term.   
These additional numerical results further highlight how high-order methods enable the approximate solution of electrophysiology problems while maintaining a good balance between computational cost and accuracy.
As future developments of this work, we plan the extension of $p$-PolyDG adaptive schemes to track the wavefront with high accuracy and significantly reduce the computational costs of the simulations, and the construction of space-time DG formulations \cite{antonietti2020space, antonietti2023discontinuous} to achieve higher-order approximations also in time. 

\section*{Declaration of competing interests}
The authors declare that they have no known competing financial interests or personal relationships that could have appeared to influence the work reported in this article.

\section*{Acknowledgments}
The brain MRI images were provided by OASIS-3: Longitudinal Multimodal Neuroimaging: Principal Investigators: T. Benzinger, D. Marcus, J. Morris; NIH P30 AG066444, P50 AG00561, P30 NS09857781, P01 AG026276, P01 AG003991, R01 AG043434, UL1 TR000448, R01 EB009352. AV-45 doses were provided by Avid Radiopharmaceuticals, a wholly-owned subsidiary of Eli Lilly.

\bibliographystyle{hieeetr}
\bibliography{sample.bib}
\end{document}